\documentclass[11pt]{amsart}

\usepackage[utf8]{inputenc}

\usepackage[margin=1.2in, centering]{geometry}
\geometry{letterpaper}   
\usepackage{amsmath,amssymb,latexsym,mathtools}

\usepackage{amsthm}
\newtheorem{thm}{Theorem}[section]
\newtheorem{lem}[thm]{Lemma}

\newtheorem{prop}[thm]{Proposition}

\numberwithin{equation}{section}

\theoremstyle{definition}

\newtheorem{defn}[thm]{Definition}
\newtheorem{ex}[thm]{Example}
\newtheorem{remark}[thm]{Remark}

\newtheorem*{acknowledgement}{Acknowledgements}

\usepackage{graphicx}
\DeclareGraphicsExtensions{.pdf,.png,.jpg}
\usepackage{subfig}

\usepackage[usenames,dvipsnames]{color}
\usepackage{chngcntr}
\counterwithin*{equation}{section}

\newcommand{\bE}{\mathbb{E}}
\newcommand{\bN}{\mathbb{N}}
\newcommand{\bC}{\mathbb{C}}

\newcommand{\bR}{\mathbb{R}}

\newcommand{\bZ}{\mathbb{Z}}
\newcommand{\bT}{\mathbb{T}}

\newcommand{\cP}{\mathcal{P}}

\newcommand{\norm}[1]{{\left\Vert#1\right\Vert}}

\newcommand{\diam}[1]{\mathrm{diam}(#1)}

\usepackage{hyperref}
\hypersetup{linktocpage}


\begin{document}

    \title{A variant of the $\Lambda (p)$-set problem in Orlicz spaces}
	\author{Donggeun Ryou}
	\address{Department of Mathematics, University of Rochester, Rochester, NY, USA}
	\email{dryou@ur.rochester.edu}
	\date{\today}
	\subjclass{43a46 (primary) 46b09, 46e30 (secondary)}
	\keywords{The $\Lambda(p)$ problem, Orlicz space, Matuszewska-Orlicz index, Operator norm of a random restriction, Littelwood-Paley estimate}

\begin{abstract} 
	We introduce $ \Lambda(\Phi) $-sets as generalizations of $ \Lambda(p) $-sets. These sets are defined in terms of Orlicz norms. We consider $\Lambda(\Phi)$-sets when the Matuszewska-Orlicz index of $ \Phi $ is larger than $ 2 $. When $S$ is a $\Lambda(\Phi)$-set, we establish an estimate of the size of $ S \cap [-N,N] $ where $ N \in \bN $. Next, we construct a $ \Lambda(\Phi_1)$-set which is not a $ \Lambda(\Phi_2)$-set for any $ \Phi_2 $ such that $ \sup_{u \geq 1} \Phi_2(u) / \Phi_1(u) = \infty $ by using a probabilistic method. With an additional assumption about a subset $E$ of $\bZ$, we can construct such a $\Lambda(\Phi_1)$-set contained in $E$. These statements extend known results on the structure of $ \Lambda(p) $-sets to $\Lambda(\Phi)$-sets.
\end{abstract}
\maketitle

\section{Introduction}
A subset $ S $ of $ \bZ $ is called a \textit{$ \Lambda(p) $-set} if 
\[ \norm{\sum_{n \in S} a_n e^{2\pi i n x}}_{L^p(\bT)}\leq C(p) \norm{\sum_{n \in S} a_n e^{2\pi i n x}}_{L^q(\bT)} \]
for some $ q $ such that $  0< q < p $ and a constant $ C(p) $ which depends on $ p $ where $\bT = \bR/2\pi \bZ$. We are interested in $\Lambda(p)$-sets when $p>2$ because $S$ is a $\Lambda(p)$ set for $p>2$ if and only if 
\begin{equation}\label{1defp>2}
		\norm{\sum_{n \in S} a_n e^{2\pi i n x}}_{L^p(\bT)}\leq C(p) \bigg(\sum_{n \in S}|a_n|^2\bigg)^{1/2}
\end{equation}
and we can use properties of Fourier series. The inequality \eqref{1defp>2} is a direct implication of \cite[Theorem 1.4]{Rud60}. When $p \leq 2 $, the behavior of $ \Lambda(p) $-sets is different, see \cite[204p]{Bour01}. In \cite{Rud60}, Rudin studied properties of $ \Lambda(p) $-sets and the following theorems are related to this paper.
\begin{thm}\cite[Theorem 3.5]{Rud60}
	Let us denote by $ |A| $ a number of elements in a set $ A $. If $p >2$ and $ S $ is a $ \Lambda(p) $-set, then
	\begin{equation}\label{1sizeS}
		|S \cap [-N,N]| \leq C(p) N^{2/p}.
	\end{equation}
\end{thm}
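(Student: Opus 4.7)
The plan is to derive \eqref{1sizeS} by dualizing \eqref{1defp>2} and testing the dual inequality on the Dirichlet kernel. Plugging $a_n\equiv 1$ directly into \eqref{1defp>2} only yields $\norm{\sum_{n\in S\cap[-N,N]} e^{2\pi i n x}}_{L^p(\bT)}\leq C(p)\,|S\cap[-N,N]|^{1/2}$, which carries no information about $N$; dualization is what brings $N$ into play.

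First, I interpret \eqref{1defp>2} as the statement that the synthesis operator $T\colon \ell^2(S)\to L^p(\bT)$, defined by $T((a_n))=\sum_{n\in S}a_n e^{2\pi i n x}$, is bounded with $\norm{T}\leq C(p)$. Its adjoint with respect to the standard pairings is the analysis map $T^{*}\colon L^{p'}(\bT)\to \ell^2(S)$, $g\mapsto (\hat g(n))_{n\in S}$, where $p'=p/(p-1)$ is the conjugate exponent. Since $\norm{T^*}=\norm{T}$, I obtain
$$\bigg(\sum_{n\in S}|\hat g(n)|^2\bigg)^{1/2} \leq C(p)\,\norm{g}_{L^{p'}(\bT)}$$
for every $g\in L^{p'}(\bT)$.

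Next, I apply this with $g=D_N$, the Dirichlet kernel $D_N(x)=\sum_{|n|\leq N} e^{2\pi i n x}$, whose Fourier coefficients equal $1$ on $[-N,N]\cap\bZ$ and vanish elsewhere. The sum on the left-hand side becomes $|S\cap[-N,N]|$, so
$$|S\cap[-N,N]|\leq C(p)^2\,\norm{D_N}_{L^{p'}(\bT)}^{\,2}.$$
Finally, I invoke the standard pointwise bound $|D_N(x)|\leq C\min(N,1/|x|)$ on one period and split the resulting integral at $|x|\sim 1/N$ to get $\norm{D_N}_{L^{p'}(\bT)}\leq C(p)\,N^{1/p}$ for any $1<p'<\infty$. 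Substituting this into the previous display yields \eqref{1sizeS}.

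The argument has no genuine obstacle; the one moment of insight is recognizing that duality is required, since a direct test of \eqref{1defp>2} loses the dependence on $N$. The rest—identifying the adjoint and estimating $\norm{D_N}_{L^{p'}}$—is routine.
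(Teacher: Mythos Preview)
Your proof is correct and follows the same duality strategy as the paper's Proposition~\ref{thmcard} (which is the Orlicz generalization of Rudin's original argument). The one difference is the test function: you use the Dirichlet kernel $D_N$, whereas Rudin and the paper use the Fej\'er kernel $K_N$. Your choice is marginally cleaner in $L^p$ since $\widehat{D_N}(n)=1$ exactly on $[-N,N]$, avoiding the factor-of-two loss from $\widehat{K_N}(n)\geq 1/2$ on the middle half. The Fej\'er kernel, however, is nonnegative, and this positivity is what lets the estimate $\|K_N\|_\Psi\lesssim\Phi^{-1}(N)$ go through cleanly via concavity of $\Psi^{-1}$ when the paper extends the argument to general Orlicz norms; for an oscillating kernel like $D_N$ that step would be less transparent. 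The Fej\'er-kernel route also yields, with no extra work, the stronger conclusion that every arithmetic progression of length $N$ meets $S$ in at most $C(p)N^{2/p}$ points, since one may freely translate and dilate $K_N$.
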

\begin{thm}\cite[Theorem 4.8]
	{Rud60}\label{1ThmRudex}
	Let $ n $ be an integer larger than $1$. There exists a $ \Lambda(2n) $-set which is not a $ \Lambda(2n+\epsilon) $-set for any $ \epsilon>0 $.
\end{thm}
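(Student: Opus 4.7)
The strategy is to exhibit an infinite set $S \subseteq \bN$ satisfying two properties: (i) $S$ is a $\Lambda(2n)$-set; (ii) $|S \cap [1,N]| \gtrsim N^{1/n}$ along an infinite sequence of $N \in \bN$. Property (ii) immediately prevents $S$ from being a $\Lambda(2n+\epsilon)$-set for any $\epsilon > 0$: by \eqref{1sizeS}, such membership would require $|S \cap [-N,N]| \leq C(2n+\epsilon)\, N^{2/(2n+\epsilon)} = o(N^{1/n})$, contradicting (ii).

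First I would record the arithmetic characterization of $\Lambda(2n)$-sets (valid for integer $n \geq 1$): $S$ is $\Lambda(2n)$ if and only if the $n$-fold representation function
\[ r_{S,n}(m) := \#\{(s_1,\ldots,s_n) \in S^n : s_1 + \cdots + s_n = m\} \]
is uniformly bounded in $m \in \bZ$. The forward direction comes from expanding $\bigl\|\sum_{s \in S} a_s e^{2\pi i s x}\bigr\|_{L^{2n}}^{2n}$ via orthogonality of exponentials and applying Cauchy-Schwarz; the converse follows by testing on indicator functions of finite subsets of $S$. This reduces (i) to a purely combinatorial statement.

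Next I would build the infinite $S$ from finite $B_n^+$-sets of near-maximal density. The Bose-Chowla construction via finite-field arithmetic furnishes, for each prime power $q$, an explicit set $S_q \subseteq [0, q^n - 1]$ with $|S_q| = q$ and $r_{S_q, n} \leq n!$; the algebraic input is that any equality of $n$-fold sums in $S_q$ translates into equality of the first $n$ power sums of the underlying field parameters, which by Newton's identities forces equality as multisets. Taking a sequence $q_k \to \infty$ and shifts $T_k$ growing fast enough (say $T_k > n(T_{k-1} + q_{k-1}^n)$, and spaced so that the $n$-fold multiset sums $\sum_i T_{k_i}$ are distinct for distinct multisets of indices), I would set $S = \bigsqcup_k (T_k + S_{q_k})$. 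One verifies that no $n$-fold sum in $S$ can straddle blocks in two different ways, so $r_{S,n}$ remains bounded; meanwhile $|S \cap [1, T_k + q_k^n]| \geq q_k$, yielding $|S \cap [1,N_k]| \gtrsim N_k^{1/n}$ along the corresponding subsequence $N_k$.

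The main obstacle is Step 2: producing the explicit $B_n^+$-set of density $\sim N^{1/n}$ is where the algebraic content sits, and a naïve probabilistic construction only yields density $N^{1/(2n-1)}$, which is too weak to exclude $\Lambda(2n+\epsilon)$ for small $\epsilon$. Once the Bose-Chowla output is in hand, Step 1 is a standard expansion and the contradiction via \eqref{1sizeS} is immediate.
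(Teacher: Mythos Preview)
The paper does not prove this theorem; it is simply cited from \cite[Theorem~4.8]{Rud60}, with the comment that Rudin ``constructed explicit examples.'' There is thus no in-paper proof to compare against, so let me assess your sketch on its own terms and relative to Rudin's original argument.

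Two issues. The minor one: the ``if and only if'' in your first step overreaches. You only need (and only correctly justify) the direction ``$r_{S,n}$ bounded $\Rightarrow$ $\Lambda(2n)$,'' which is Rudin's Theorem~4.5. The converse does not follow from testing on indicators---the computation $\|f\|_{2n}^{2n}\ge R^2$ versus $\|f\|_2^2\le nR$ gives $R^{1/n}\lesssim R^{1/2}$, an inequality in the wrong direction once $n\ge 2$---but fortunately you never use it.

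The real gap is in the gluing. Your spacing condition guarantees that the multiset of block indices $\{k_1,\ldots,k_n\}$ is determined by the value of an $n$-fold sum, but this does \emph{not} keep $r_{S,n}$ bounded. Once a sum draws from two blocks, the residual count
\[
\bigl|\{(a,b)\in S_{q_{k_1}}\times S_{q_{k_2}}:a+b=c\}\bigr|
\]
is only trivially bounded by $\min(q_{k_1},q_{k_2})$, and nothing in the Bose--Chowla construction for \emph{different} primes forces it to be $O(1)$; indeed for two unrelated $B_n$-sets this count can be as large as the smaller set. As $q_k\to\infty$, the argument for bounded $r_{S,n}$ collapses. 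The set you build \emph{is} $\Lambda(2n)$, but not for the reason you give: one places the blocks in lacunary dyadic intervals and uses the Littlewood--Paley decomposition to conclude that a union of uniformly-$\Lambda(2n)$ sets supported in lacunary intervals is again $\Lambda(2n)$---exactly the mechanism this paper uses in the proof of Theorem~\ref{M3} via Lemma~\ref{LPest}, and essentially Rudin's own route. This bypasses any need to bound $r_{S,n}$ for the infinite set.
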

In the proof of Theorem \ref{1ThmRudex}, Rudin constructed explicit examples while no such explicit example is known when $p$ is not an even integer. However, Bourgain \cite{Bour89} used the probabilistic method to show that there exists a $ \Lambda(p) $-set when $ p >2 $ which is not a $ \Lambda(p+\epsilon) $-set for any $ \epsilon >0 $. The key step of \cite{Bour89} was to prove that for all $N$, there exists a set $ S $ of size $ N^{2/p} $ in $ \{ 1, \cdots, N \} $ which satisfies \eqref{1defp>2} where $C(p)$ is independent of $N$. This is where the probabilistic argument was used. In \cite{Tal95}, Talagrand generalized this key step of \cite{Bour89} that for all $N$, there exists a set $ S $ of size $ N^{2/p} $ in $ \{ 1, \cdots, N \} $ such that 
\begin{equation}\label{1eqTal1}
	\norm{\sum_{n \in S} a_n e^{2\pi i n x}}_{L^{p,1}(\bT)} \leq C(p) \bigg(  \sum_{n \in S} |a_n|^2\bigg)^{1/2}
\end{equation}
where $ L^{p,1}(\bT) $ is a Lorentz space and $C(p)$ does not depend on $N$. In \cite{Tal14}, the reader will find a simpler proof of the result. There were some works which studied analogues of $\Lambda(p)$-sets in more general function spaces. Grinnell \cite{Grin99} defined $ \Lambda(p,q) $-sets in terms of Lorentz spaces as generalizations of $ \Lambda(p) $-sets and studied properties of $ \Lambda(p,q) $-sets. Grinnell also showed that there exists a $ \Lambda(p_1,q_1) $-set which is not a $ \Lambda(p,q) $-set for any $ p>p_1  $ when $ 2 < p_1 \leq q_1 $. Astashkin \cite{Ast14} studied the $ \Lambda(p) $-sets in a setting of rearrangement invariant spaces and showed that there exists a $ \Lambda(p) $-set when $ p >2 $ such that \eqref{1defp>2} does not hold when the $ L^p $ norm is replaced by an Orlicz norm where the corresponding Orlicz space is properly contained in the $ L^p $ space.\\
\indent In another direction, as a follow up paper of \cite{Bour89}, Bourgain \cite{Bour89_2} proved that if $ p>4 $, there exists a $ \Lambda(p) $-set $ S $ contained in a square set $ Q \coloneqq \{k^2 : k \in \bN \} $ such that $ S $ is not a $ \Lambda(p+\epsilon) $-set for any $ \epsilon >0 $. The important intermediate step of \cite{Bour89_2} is to prove that
 \begin{equation}\label{1estQ}
 	\norm{\sum_{n \in Q} a_n e^{2\pi i n x}}_{L^p(\bT)} \leq C(p) N^{1/2-2/p} \bigg(\sum_{n \in Q} |a_n|^{2} \bigg)^{1/2} \qquad \mathrm{if} \  p>4
 \end{equation}
and to combine with the argument in \cite{Bour89}.\\
\indent In this paper, we consider $\Lambda(\Phi)$-sets which are generalizations of $\Lambda(p)$-sets. In order to define $\Lambda(\Phi)$-sets, we need the following definitions and notations. A function $ \Phi: [0,\infty) \rightarrow [0,\infty ] $ is called a \textit{Young function} if $ \Phi $ is convex, $ \Phi(0)=0 $ and $ \lim\limits_{u \rightarrow \infty} \Phi(u) = +\infty $. The \textit{Orlicz norm} of a measurable function $ f $ on a measure space $ (\Omega, \mu) $ with respect to a Young function $ \Phi $ is defined by
\[ \norm{f}_{\Phi} = \inf \bigg\{ k>0 : \int_\Omega \Phi (|f| / k)d\mu \leq 1 \bigg\}. \]
The norm $ \norm{f}_\Phi $ is also called the \textit{Luxemberg norm} of $ f $. The Orlicz space $ L^{\Phi}(\Omega, \mu) $ is defined as a space of all measurable functions $ f $ on $  \Omega $ such that $ \norm{f}_{\Phi} < \infty $.\\ 
A function $ \Phi : [0,\infty) \rightarrow [0,\infty] $ is called a \textit{nice Young function} or an \textit{$ N $-function} if $ \Phi $ is a continuous Young function such that $ \Phi(u) =0 $ if and only if $ u=0 $ and satisfies  
\[ \lim_{u \rightarrow 0} \frac{\Phi(u)}{u} =0 \qquad \mathrm{and} \qquad \lim\limits_{u \rightarrow \infty} \frac{\Phi(u)}{u} =+\infty.
\]
We denote that $ \Phi \in \Delta_2 $ if there exists $K_1 >0$ and $u_1 \geq 0$ such that $ \Phi(2u) \leq K_1\Phi(u) $ for $ u\geq u_1 $ and $ \Phi \in \nabla_2 $ if there exists $K_2 >2$ and $u_2 \geq 0$ such that $ \Phi(2u) \geq K_2\Phi(u) $ for $u \geq u_2$.
We also define the following indices.
\[ M^a (t, \Phi) \coloneqq \sup_{u >0} \frac{\Phi(tu)}{\Phi(u)}\qquad  M^\infty(t,\Phi) \coloneqq \limsup_{u\rightarrow \infty} \frac{\Phi(tu)}{\Phi(u)} \]
\[ \alpha_{\Phi}^i \coloneqq \lim\limits_{t \rightarrow 0+} \frac{\log M_i(t, \Phi)}{\log t} \qquad \beta_{\Phi}^i \coloneqq \lim_{t \rightarrow \infty} \frac{\log M_i
	(t ,\Phi)}{\log
	 t} \]
where $ i = a  $ or $ i =\infty $. The indices $ \alpha_\Phi^i $ and $ \beta_{\Phi}^i $ are called \textit{Matuszewska-Orlicz indices}. This is a special case of Boyd indices of rearrangement-invariant function spaces, see \cite{Mal89} and \cite{BS88}. These indices imply that
\begin{equation*}
	C_1(\epsilon,\Phi)u^{\alpha_{\Phi}^\infty-\epsilon} \leq \Phi(u) \leq C_2(\epsilon,\Phi) u^{\beta_{\Phi}^\infty +\epsilon} 
\end{equation*}
if $ u \geq 1 $ where $ C_1(\epsilon,\Phi) $ and $ C_2(\epsilon,\Phi) $ are constants that depend on $ \epsilon $  and $ \Phi $. We establish this in Lemma \ref{2propM}. Also, note that $ \alpha_\Phi^i \leq \beta_\Phi^i $.\\
Throughout the paper, we consider $ \Omega = \bT $ and $ \mu $ is the Lebesgue measure so that $ d\mu \coloneqq dx $. We denote $ L^\Phi(\Omega, \mu) $ by $ L^{\Phi}(\bT) $. If $ \Phi(x) = x^p $, we denote $ \norm{f}_{\Phi} $ by $ \norm{f}_p $ and denote $ L^p(\Omega, \mu) $ by $ L^p(\bT) $. We denote by $ X \lesssim Y $ when $ X \leq CY $ for some constant $ C >0 $ where the constant $ C $ may depend on $ \Phi,\Phi_0, \Phi_1$, $ \Phi_2 $ and some parameters determined by $ \Phi, \Phi_0, \Phi_1$ and $ \Phi_2 $ which are $p$, $p_0$, $p_1$, $p+\epsilon$ and $p_0 + \epsilon_0$, but does not depend on other parameters, such as $ N $. We write $ X \approx Y $ to denote that $ X \gtrsim Y $ and $ X \lesssim Y $. If there is a constant $ C_\epsilon $ that depends  on $ \epsilon $ such that $ X(\epsilon) \leq C_\epsilon Y(\epsilon) $, we denote it by $ X(\epsilon) \lesssim_{\epsilon} Y(\epsilon) $. Now, we state the definition of $ \Lambda(\Phi) $-sets and main theorems.
\begin{defn}
	A subset $ S $ of $ \bZ $ is called a \textit{$ \Lambda(\Phi) $-set} if there exists a nice Young function $ \Phi_1 $ such that $ \beta_{\Phi_1}^\infty < \alpha_{\Phi}^\infty $ and 
	\begin{equation}\label{1defnLambda}
		\norm{\sum_{n\in S} a_n e^{2\pi i n x}}_{\Phi}  \lesssim \norm{\sum_{n\in S} a_n e^{2\pi i n x}}_{\Phi_1}
	\end{equation}
	for all $ \sum_{n \in S} a_n e^{2\pi i nx} $ where $ a_n \in \bC $.
\end{defn}
If $ \alpha_\Phi^\infty > 2 $, \eqref{1defnLambda} can be replaced by
\begin{equation}\label{1defnLambda2}
	\norm{\sum_{n\in S} a_n e^{2\pi i n x}}_{\Phi}  \lesssim \bigg( \sum_{n \in S}|a_n|^2 \bigg)^{1/2}.
\end{equation}
The equivalence of \eqref{1defnLambda} and \eqref{1defnLambda2} follows from Theorem \ref{thmRud} in section \ref{sec:3}.
\begin{thm}\label{M1}
	Let $ \Phi $ is a nice Young function such that $ 2 < \alpha_\Phi^\infty $. If $ S $ is a $ \Lambda(\Phi) $-set, then
	\begin{equation}\label{1setsize}
		|S \cap [-N,N]| \lesssim (\Phi^{-1}(N))^2.
	\end{equation}
\end{thm}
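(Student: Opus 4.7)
The plan is to mimic Rudin's classical argument for $\Lambda(p)$-sets, replacing the Nikol'skii inequality $\norm{f}_\infty \lesssim N^{1/p}\, \norm{f}_p$ by its Orlicz analog $\norm{f}_\infty \lesssim \Phi^{-1}(N)\, \norm{f}_\Phi$. Set $M = |S \cap [-N,N]|$ and test the $\Lambda(\Phi)$ property on the trigonometric polynomial
$$ f(x) = \sum_{n \in S \cap [-N,N]} e^{2\pi i n x}, $$
whose spectrum lies in $[-N,N]$ and which satisfies $f(0) = M$. Since $\alpha_\Phi^\infty > 2$, the equivalence of \eqref{1defnLambda} and \eqref{1defnLambda2} (supplied by Theorem \ref{thmRud}) gives the $L^2$-form of the hypothesis, so $\norm{f}_\Phi \lesssim M^{1/2}$.

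The heart of the proof is the following Orlicz--Nikol'skii inequality: for every trigonometric polynomial $g$ with spectrum contained in $[-N,N]$,
$$ \norm{g}_\infty \lesssim \Phi^{-1}(N)\, \norm{g}_\Phi. $$
I would establish this via the de la Vallée Poussin kernel $V_N$, which satisfies $\widehat{V_N} \equiv 1$ on $[-N,N]$, $\norm{V_N}_1 \lesssim 1$, and $\norm{V_N}_\infty \lesssim N$. Since $g = V_N * g$, Orlicz--Hölder yields
$$ |g(x)| \leq \int_{\bT} |V_N(x-y)|\,|g(y)|\,dy \lesssim \norm{V_N}_{\Phi^*}\, \norm{g}_\Phi, $$
where $\Phi^*$ is the Young function complementary to $\Phi$. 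Using the monotonicity of $\Phi^*(u)/u$, one has $\Phi^*(|V_N|/k) \leq (|V_N|/\norm{V_N}_\infty)\,\Phi^*(\norm{V_N}_\infty/k)$, whence $\int \Phi^*(|V_N|/k)\,dx \lesssim \Phi^*(CN/k)/N$. Demanding this to be $\leq 1$ and invoking the classical $N$-function identity $\Phi^{-1}(t)(\Phi^*)^{-1}(t) \approx t$ yields $\norm{V_N}_{\Phi^*} \lesssim \Phi^{-1}(N)$, completing the Nikol'skii step.

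Combining the two pieces,
$$ M = f(0) \leq \norm{f}_\infty \lesssim \Phi^{-1}(N)\, \norm{f}_\Phi \lesssim \Phi^{-1}(N)\, M^{1/2}, $$
so $M \lesssim (\Phi^{-1}(N))^2$, which is \eqref{1setsize}.

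The main obstacle is the Orlicz--Nikol'skii estimate: all of the genuinely Orlicz-theoretic work is concentrated there, since one must recover a substitute for the $L^{p'}$-bound on $V_N$ that underlies the classical proof, using only the $L^1$ and $L^\infty$ sizes of $V_N$ and the duality between $\Phi$ and $\Phi^*$. The remaining ingredients---the $L^2$-reduction of \eqref{1defnLambda} and the trivial evaluation $f(0) = M$---are immediate once this inequality is in hand. Observe that $\alpha_\Phi^\infty > 2$ enters only through the reduction to \eqref{1defnLambda2}; the Nikol'skii step itself is purely a duality argument and does not rely on the Matuszewska--Orlicz index.
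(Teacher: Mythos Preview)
Your argument is correct and follows essentially the same route as the paper: both test the $\Lambda(\Phi)$ inequality on the Dirichlet-type sum over $S\cap[-N,N]$, pair it via Orlicz--H\"older with a summability kernel, and reduce to the bound $\norm{\text{kernel}}_{\Psi}\lesssim\Phi^{-1}(N)$ together with the identity $\Phi^{-1}(N)\Psi^{-1}(N)\approx N$. The only cosmetic differences are that the paper uses the Fej\'er kernel (and hence its explicit pointwise decay $K_N(x)\lesssim 1/(Nx^2)$ to compute $\norm{K_N}_\Psi$) rather than the de~la~Vall\'ee~Poussin kernel, and that the paper packages the argument inside a slightly more general statement about intersections with arbitrary arithmetic progressions of length $N$, of which the interval $[-N,N]$ is a special case.
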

\begin{thm}\label{M2}
	Assume that $ \Phi_0, \Phi $ are nice Young functions such that $ \Phi_0, \Phi \in \Delta_2 $ and $ 2\leq \alpha_{\Phi_0}^\infty $, $ \beta_{\Phi_0}^\infty < \alpha_{\Phi}^\infty $. Let $E$ be a subset of $\bZ$ and $\diam{E} \coloneqq |\max E - \min E|+1 $.\\
	For
	\[ \delta = \frac{(\Phi^{-1}(\diam{E}))^2}{|E|}, \]
	we consider independent and identically distributed random variables $ ( \xi_i )_{i \in \bZ} $ such that
    \[ P(\xi_i=1) = \delta \qquad P(\xi_i =0)  = 1-\delta. \]
    For a subset $ E $ of $ \bZ $ and $ (\xi_i)_{i \in \bZ} $, we define the following.
    \[  J \coloneqq\{ i\in E : \xi_i =1 \}  \]
    \[ K_{\Phi_0}(E) \coloneqq \sup_{\norm{a_n}_{\ell^2} \leq 1} \norm{\sum_{n \in E} a_n e^{2\pi i n x}}_{\Phi_0}, \qquad K_\Phi(J) \coloneqq \sup_{\norm{a_n}_{\ell^2} \leq 1} \norm{\sum_{n \in J} a_n e^{2\pi i n x}}_{\Phi}. \]
	If 
	\begin{equation}\label{M2cond1}
		K_{\Phi_0} (E) \lesssim \frac{|E|^{1/2}}{\Phi_0^{-1} ( \diam{E})},
	\end{equation}
	then 
	\begin{equation*}
		\bE(K_{\Phi}(J)) \lesssim 1.
	\end{equation*}
\end{thm}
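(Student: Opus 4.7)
The strategy is to adapt the probabilistic construction of Bourgain \cite{Bour89} and Talagrand \cite{Tal95, Tal14} from the $L^p$ setting to Orlicz spaces, with the Matuszewska-Orlicz index $\alpha_\Phi^\infty > 2$ playing the role of the exponent $p > 2$. The goal is to bound $\bE \sup_{\norm{a}_{\ell^2} \leq 1} \norm{\sum_{n \in E} \xi_n a_n e^{2\pi i n x}}_{\Phi}$, which controls $\bE K_\Phi(J)$. Since $\beta_{\Phi_0}^\infty < \alpha_\Phi^\infty$, one has a continuous embedding $L^\Phi(\bT) \hookrightarrow L^{\Phi_0}(\bT)$, so the $\Phi_0$-hypothesis \eqref{M2cond1} acts as a weaker but uniform base estimate that the random thinning should upgrade to a $\Phi$-estimate.

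The first step is to reduce the Orlicz norm to an integral estimate via the Luxemburg definition: it suffices to show $\int_{\bT} \Phi(|f_a|/C)\, dx \leq 1$ uniformly in $a$ on the $\ell^2$ unit ball, where $f_a = \sum_{n \in E} \xi_n a_n e^{2\pi i n x}$ and $C$ is an absolute constant. After the standard symmetrization $\xi_n \mapsto \varepsilon_n \xi_n$ with independent Rademacher signs $\varepsilon_n$, the problem becomes bounding the supremum of a Rademacher-plus-selector process indexed by the $\ell^2$ unit ball. Using Lemma \ref{2propM} to sandwich $\Phi(u)$ between power functions $u^{\alpha_\Phi^\infty - \varepsilon}$ and $u^{\beta_\Phi^\infty + \varepsilon}$ for $u \geq 1$, the integrand can be split into dyadic level sets and compared to $L^q$-type estimates for appropriate $q$ sitting between $\beta_{\Phi_0}^\infty$ and $\alpha_\Phi^\infty$.

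The second step is a chaining / entropy argument on the $\ell^2$ unit ball, with the distance induced by the $\Phi_0$-norm of the associated trigonometric polynomial. The hypothesis \eqref{M2cond1} controls the covering numbers at each scale through $K_{\Phi_0}(E) \lesssim |E|^{1/2}/\Phi_0^{-1}(\diam{E})$. For each fixed $a$ in the net, a Kahane-Khintchine or Bernstein-type inequality for the selector process bounds the tail of $\norm{f_a}_{\Phi}$. The positive gap $\alpha_\Phi^\infty - \beta_{\Phi_0}^\infty > 0$ is then used to make the chaining sum convergent, exactly as the gap $p - p_0$ is used in the $L^p$ version of the argument of \cite{Tal14}.

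The main obstacle is establishing a Kahane-Khintchine-type tail or moment inequality in the Orlicz setting that is sharp enough to feed into the chaining without losing powers of $\diam{E}$: unlike $L^p$, the Orlicz norm does not factor through a single exponent, so one must work carefully with the two-sided power bound from Lemma \ref{2propM}. The parameter $\delta = (\Phi^{-1}(\diam{E}))^2/|E|$ is calibrated so that the expected cardinality $\delta|E|$ matches the extremal size of a $\Lambda(\Phi)$-set given by Theorem \ref{M1}, so the quantitative estimates at every stage must be tight up to constants depending only on $\Phi$ and $\Phi_0$, and careful bookkeeping of the Matuszewska-Orlicz constants will be needed to avoid spurious logarithmic or polynomial losses.
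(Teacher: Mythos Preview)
Your high-level strategy (symmetrize, then chain on the $\ell^2$ ball using the $\Phi_0$-hypothesis to control entropy) is the Bourgain-style route, and it is plausible, but it differs substantially from what the paper actually does, and the obstacle you flag at the end is exactly the place where your sketch is incomplete.

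The paper does \emph{not} run a chaining argument directly in the Orlicz norm. Instead it separates the problem into a probabilistic black box and a deterministic conversion step. It fixes $p_1>\beta_\Phi^\infty$, takes $X=L^{p_1}(\bT)$, and introduces the auxiliary norms $\norm{\cdot}_C$ whose dual unit ball is $\{g:\norm{g}_{X^\ast}\le 1,\ \sum_{n\in E}|\widehat g(n)|^2\le C\}$. Talagrand's theorem for $2$-convex duals (cited as Lemma~\ref{TalProb}) is applied twice, with $C_1=|E|^{1/2}/\Phi^{-1}(\diam E)$ and $C_2=|E|\log|E|/(10(\Phi^{-1}(\diam E))^2)$, yielding $\bE\norm{U_J}_{C_1}\lesssim 1$ and $\bE\norm{U_J}_{C_2}\lesssim\sqrt{\log|E|}$. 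The Orlicz norm then enters only through a \emph{deterministic} lemma: if $\norm{f}_{C_1}\le 1$ and $\norm{f}_{C_2}\le\sqrt{\log|E|}$, then $\norm{f}_\Phi\lesssim 1$. The proof of that lemma splits $\{|f|\le 1\}$, $\{1\le|f|\le D\}$, $\{|f|\ge D\}$; on the middle range it uses the Hahn--Banach decomposition $f=v_1+v_2$ coming from $\norm{f}_{C_1}\le 1$, and on the top range it uses the decomposition $f=w_1+w_2$ from the $C_2$ bound together with the hypothesis \eqref{M2cond1} to get a $\Phi_0$-tail estimate $\mu\{|w_2|>u\}\le 1/\Phi_0(u/k_0)$.

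The gap in your proposal is precisely the ``sandwich'' step. Replacing $\Phi(u)$ by $u^{\beta_\Phi^\infty+\epsilon}$ and then proving an $L^{\beta_\Phi^\infty+\epsilon}$ bound cannot succeed: for $\Phi(u)=u^p\log^{\alpha p}u$ one has $\alpha_\Phi^\infty=\beta_\Phi^\infty=p$, and the random set $J$ has $|J|\approx(\Phi^{-1}(\diam E))^2\approx N^{2/p}\log^{-2\alpha}N$, which is \emph{too large} to be a $\Lambda(p+\epsilon)$-set with bounded constant. Conversely, choosing $q$ strictly between $\beta_{\Phi_0}^\infty$ and $\alpha_\Phi^\infty$ gives a norm weaker than $\norm{\cdot}_\Phi$, so an $L^q$ bound does not control $\norm{\cdot}_\Phi$. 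The paper's two-norm mechanism is exactly what bridges this: the $C_1$-norm handles moderate values via an $L^{p_1}$ piece plus a small-$\ell^2$ piece, and the $C_2$-norm, combined with the $K_{\Phi_0}(E)$ hypothesis used \emph{pointwise} (not merely as an entropy bound), handles large values with a genuine $\Phi_0$-tail rather than a crude power. Without an analogue of this decomposition, your chaining argument will either lose a factor of $\diam E$ or fail to close in the Zygmund-space examples that motivate the theorem.
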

Theorem \ref{M1} and \ref{M2} imply the following.
\begin{thm}\label{M3}
	Assume that $ \Phi_0$ and $\Phi_1 $ are nice Young functions such that $ \Phi_0 \in \Delta_2 $, $\Phi_1 \in \Delta_2 \cap \nabla_2 $, $ 2 \leq \alpha_{\Phi_0}^\infty $ and $ \beta_{\Phi_0}^\infty < \alpha_{\Phi_1}^\infty  $. Let $ E $ be an infinite subset of $ \bZ $ and let $ E_N = E \cap ([-2N,-N] \cup [N,2N]) $ where $ N \in \bN $. If
	\begin{equation}\label{M3cond1}
		 K_{\Phi_0}(E_N) \lesssim \frac{|E_N|^{1/2}}{\Phi_0^{-1} (N)},
	\end{equation}
	then there exists a $ \Lambda(\Phi_1) $-set in $ E $ which is not a $ \Lambda(\Phi_2) $-set for any nice Young function $ \Phi_2 $ such that $ \Phi_2 \in \Delta_2 $ and
	\begin{equation}\label{M3cond2}
		 \sup_{u \geq 1} \frac{\Phi_2(u)}{\Phi_1(u)}= \infty.
	\end{equation}
\end{thm}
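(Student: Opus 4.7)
The plan is to build $S$ as a union of independent random subsets supplied by Theorem \ref{M2}, one at each dyadic annulus of $E$. Set $N_k = 2^k$ for $k \geq k_0$ large. For each $k$, Theorem \ref{M2} is applied to $E_{N_k}$ with $\Phi_1$ playing the role of $\Phi$; hypothesis \eqref{M3cond1} together with $\Phi_0 \in \Delta_2$ (which gives $\Phi_0^{-1}(\diam{E_{N_k}}) \approx \Phi_0^{-1}(N_k)$, since $\diam{E_{N_k}} \leq 4N_k$) supplies the input \eqref{M2cond1}. Theorem \ref{M2} then produces a random subset $J_{N_k} \subset E_{N_k}$ with $\bE\, K_{\Phi_1}(J_{N_k}) \lesssim 1$ and $\bE |J_{N_k}| = \delta_{N_k} |E_{N_k}| \approx (\Phi_1^{-1}(N_k))^2$. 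For each $k$, Markov's inequality applied to $K_{\Phi_1}(J_{N_k})$ and a standard Chernoff bound for the binomial $|J_{N_k}|$ yield a realization producing a deterministic $S_{N_k} \subset E_{N_k}$ with $K_{\Phi_1}(S_{N_k}) \leq C_0$ and $|S_{N_k}| \gtrsim (\Phi_1^{-1}(N_k))^2$, where $C_0$ is absolute; using a different realization at each scale avoids any simultaneous concentration issue. Define $S \coloneqq \bigcup_{k \geq k_0} S_{N_k} \subset E$.

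\textbf{Verifying $S$ is a $\Lambda(\Phi_1)$-set.} Given $f = \sum_{n \in S} a_n e^{2\pi i n x}$, decompose $f = \sum_k f_k$ with $f_k$ having spectrum in $S_{N_k} \subset [N_k, 2N_k] \cup [-2N_k, -N_k]$. Because $\Phi_1 \in \Delta_2 \cap \nabla_2$, the Littlewood-Paley equivalence is available on $L^{\Phi_1}(\bT)$ and gives $\norm{f}_{\Phi_1} \approx \norm{(\sum_k |f_k|^2)^{1/2}}_{\Phi_1}$. Since $\alpha_{\Phi_1}^\infty > 2$, the function $\Psi(u) \coloneqq \Phi_1(\sqrt{u})$ is (equivalent to) a Young function, and the identity $\norm{g}_{\Phi_1}^2 = \norm{g^2}_{\Psi}$ combined with the triangle inequality in $L^{\Psi}(\bT)$ yields
\[ \norm{(\sum_k |f_k|^2)^{1/2}}_{\Phi_1}^2 = \norm{\sum_k |f_k|^2}_{\Psi} \leq \sum_k \norm{f_k}_{\Phi_1}^2 . \]
The $\Lambda(\Phi_1)$-property of each $S_{N_k}$ then gives $\norm{f_k}_{\Phi_1} \leq C_0 (\sum_{n \in S_{N_k}} |a_n|^2)^{1/2}$, and summing delivers $\norm{f}_{\Phi_1} \lesssim (\sum_{n \in S} |a_n|^2)^{1/2}$, so $S$ is a $\Lambda(\Phi_1)$-set by the equivalence \eqref{1defnLambda2}.

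\textbf{Verifying $S$ is not a $\Lambda(\Phi_2)$-set.} Suppose for contradiction that $S$ is a $\Lambda(\Phi_2)$-set. Theorem \ref{M1} then forces $|S \cap [-N,N]| \lesssim (\Phi_2^{-1}(N))^2$. From \eqref{M3cond2} and the continuity of $\Phi_1, \Phi_2$, one picks $u_j \to \infty$ with $\Phi_2(u_j)/\Phi_1(u_j) \to \infty$; for each such $u_j$, the density of the dyadic sequence $\{N_k\}$ together with $\Phi_1 \in \Delta_2$ permits a choice of $k_j$ with $\Phi_1^{-1}(N_{k_j}) \approx u_j$. Combining the lower bound $|S \cap [-2N_{k_j}, 2N_{k_j}]| \geq |S_{N_{k_j}}| \gtrsim u_j^2$ with $|S \cap [-2N_{k_j}, 2N_{k_j}]| \lesssim (\Phi_2^{-1}(2N_{k_j}))^2$ gives $u_j \lesssim \Phi_2^{-1}(2N_{k_j}) \approx \Phi_2^{-1}(\Phi_1(u_j))$, and applying $\Phi_2$ (using $\Phi_2 \in \Delta_2$) yields $\Phi_2(u_j) \lesssim \Phi_1(u_j)$, contradicting the choice of $u_j$.

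\textbf{Main obstacle.} The crux is the Orlicz Littlewood-Paley step: establishing the dyadic equivalence in $L^{\Phi_1}(\bT)$ (from $\Phi_1 \in \Delta_2 \cap \nabla_2$) and the square-function estimate $\norm{(\sum_k |f_k|^2)^{1/2}}_{\Phi_1}^2 \lesssim \sum_k \norm{f_k}_{\Phi_1}^2$, whose validity depends on $\alpha_{\Phi_1}^\infty > 2$ to ensure $\Psi(u) = \Phi_1(\sqrt{u})$ is (equivalent to) a Young function. A secondary subtlety is that Theorem \ref{M1} requires $\alpha_{\Phi_2}^\infty > 2$, so the edge case $\alpha_{\Phi_2}^\infty \leq 2$ may need a separate (essentially trivial) treatment in which every $\Lambda(\Phi_2)$-claim is vacuous because $L^{\Phi_2}(\bT)$ already contains $L^2(\bT)$.
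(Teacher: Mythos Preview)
Your construction and the verification that $S$ is a $\Lambda(\Phi_1)$-set are essentially the paper's proof: dyadic pieces $S_r\subset E_{2^r}$ from Theorem~\ref{M2}, Littlewood--Paley in $L^{\Phi_1}(\bT)$ (Lemma~\ref{LPest}), and the triangle inequality in the Orlicz space for $\Gamma(u)=\tilde\Phi_1(\sqrt{u})$ (the paper supplies the needed convexity via Lemma~\ref{5newconv}, which is exactly your ``$\Psi(u)=\Phi_1(\sqrt u)$ is equivalent to a Young function'').

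Where you diverge is the ``not $\Lambda(\Phi_2)$'' step. The paper does \emph{not} go through Theorem~\ref{M1}; it tests directly on $f_r=\sum_{n\in S_r}e^{2\pi inx}$, obtaining $\norm{f_r}_{\Phi_2}\gtrsim (\Phi_1^{-1}(2^r))^2/\Phi_2^{-1}(2^r)$ from the mass near $x=0$, and then compares with $\norm{f_r}_2\approx\Phi_1^{-1}(2^r)$ using Lemma~\ref{5inv_est}. Your route via Theorem~\ref{M1} needs $\alpha_{\Phi_2}^\infty>2$, and your proposed patch for $\alpha_{\Phi_2}^\infty\le 2$ is incorrect: $\alpha_{\Phi_2}^\infty$ is a \emph{lower} index and yields no upper bound on $\Phi_2$, so $L^{\Phi_2}(\bT)\supset L^2(\bT)$ need not hold. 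In fact one can build a nice Young function $\Phi_2\in\Delta_2$ with $\alpha_{\Phi_2}^\infty=2$ and $\sup_{u\ge1}\Phi_2(u)/\Phi_1(u)=\infty$: take $\Phi_2'(u)=u\,\phi(u)$ with $\phi$ nondecreasing, satisfying $\phi(2u)\le C\phi(u)$, having arbitrarily long constant stretches (forcing $\alpha_\phi^\infty=0$, hence $\alpha_{\Phi_2}^\infty=2$), yet with $\phi(u_j)/u_j\to\infty$ along a subsequence. For such $\Phi_2$ your argument as written does not apply, and the claim is certainly not vacuous.

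The gap is easily repaired. Either switch to the paper's direct lower bound on $\norm{f_r}_{\Phi_2}$, or observe that the proof of Theorem~\ref{M1}/Proposition~\ref{thmcard} only uses the implication ``$S$ is $\Lambda(\Phi_2)\Rightarrow \norm{\cdot}_{\Phi_2}\lesssim\norm{\cdot}_2$ on $S$'', and Theorem~\ref{thmRud} gives this implication for \emph{every} nice Young function $\Phi_2$, with no constraint on $\alpha_{\Phi_2}^\infty$. With that remark your size-bound argument goes through for all admissible $\Phi_2$.
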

\begin{remark}
	(1) The proof of Theorem 1.6 essentially follows from Talagrand's argument in \cite[Section 16.6]{Tal14} and \cite[Section 19.3]{Tal21}. But, we found a simpler way to handle large values of $|\sum a_n e^{2\pi i n x}|$ without using Gin\'{e}-Zinn Theorem, see \cite[Section 19.3]{Tal21}.\\
	(2) Let $ S $ be a $ \Lambda(\Phi_1)$-set in $ E $ constructed by Theorem \ref{M3}. The set $ S $ has the maximal density in the sense that
	\[ |S \cap [-N,N]| \approx (\Phi_1^{-1}(N))^2 \]
	(3) Theorem \ref{M2} can be applied, not only to $(e^{2\pi inx})_{n \in \bZ}$, but also to uniformly bounded orthogonal systems.
\end{remark}
The following examples show that Theorem \ref{M1}, \ref{M2} and \ref{M3} are generalizations of previous results in \cite{Bour89},\cite{Bour89_2},\cite{Rud60},\cite{Tal95}.
\begin{ex}
	Let $ \Phi(u) = u^p $ for some $ p >2 $, $ \Phi_0(u) = u^2 $ and let $ E=\bZ $. By Plancherel theorem, \eqref{M3cond1} holds. Therefore, Theorem \ref{M3} implies that there exists a $ \Lambda(p) $-set which is not in a $ \Lambda(q) $-set for any $ q>p $. This was proved in \cite{Bour89}.
\end{ex}
\begin{ex}
	Let $ \Phi_0(u) = u^{p_0} $ for some $ p_0>4 $ and $ \Phi(u) = u^p $ where $ p >p_0$. Let us consider $ Q = \{ k^2 : k \in \bN \} $ and $ Q_N = Q \cap ([-2N ,N] \cup [N,2N]) $. We have
	\[ \frac{|Q_N|^{1/2}}{\Phi_0^{-1}(N)} \approx N^{1/4-1/p_0}. \]
	In \cite[(4.1)]{Bour89_2}, Bourgain proved that $ K_{\Phi_0}(Q \cap [-N,N]) \lesssim N^{1/4 - 1/p_0} $ when $ p_0 >4 $ . Therefore, \eqref{M3cond1} is satisfied and Theorem \ref{M3} implies that there exists a $ \Lambda(p) $-set in $ Q $ when $ p >4 $ and it is not a $ \Lambda(q) $-set for any $ q >p $. This was proved in \cite{Bour89_2}. However, Theorem \ref{M2} extends the Bourgain's result because Corollary 5 in \cite{Bour89_2} shows only the existences of $\Lambda(p)$-sets in $Q$ while Theorem \ref{M2} shows their existences with the probabilistic estimate.
\end{ex}
\begin{ex}\label{1ex3}
	Let us consider a function $ \Phi(x) $ such that
	\begin{equation}\label{1exPhi}
		\Phi(u)  = \left\lbrace \begin{matrix}
			cu^2 & \mathrm{if} \ u\leq u_0\\
			u^p \log^{\alpha p} (u) &\mathrm{if} \ u > u_0
		\end{matrix} \right.
	\end{equation}
	where $ p >2 $ and $ \alpha \in \bR $. We can choose $ c $ and $ u_0 $ such that $ \Phi $ is a nice Young function. By the inequality \eqref{2pabq}, Lemma \ref{propab} (1) and Lemma \ref{proppq} (2), it is easy to check that $ \Phi  \in \Delta_2 \cap \nabla_2$ and $ \alpha_{\Phi}^\infty = \beta_{\Phi}^\infty = p $ regardless of $c$ and $u_0$. Now, we can consider the Orlicz space $ L^{\Phi}(\bT) $, which is called a \textit{$ L^p(\log L)^{\alpha}(\bT) $ space} or a \textit{Zygmund space}. The reader can find more information about this space in \cite[Chapter 4, Section 6]{BS88}. The norm $\norm{\cdot}_{\Phi}$ is equivalent to the norm of $L^p(\log L)^\alpha (\bT)$ in \cite{BS88} and the proof easily follows from \cite[Proposition 1]{AR02}\\
	Note that
	\[ \Phi^{-1} ( u) \approx u^{1/p} \log^{-\alpha} (u) \]
	for sufficiently large $ u $. If $ S $ is a $ \Lambda(\Phi) $-set, then Theorem \ref{M1} implies that
	\[ |S \cap [-N, N]| \lesssim N^{2/p} \log^{-2\alpha} (N).  \]
	Let us consider $ \Phi_1(u) $ and $ \Phi_2(u) $ such that 
	\begin{equation*}
		\Phi_1(u)  = \left\lbrace \begin{matrix}
			c_1u^2 & \mathrm{if} \ u\leq u_1\\
			u^{p_1} \log^{\alpha_1 p_1} (u) &\mathrm{if} \ u > u_1
		\end{matrix} \right. , \qquad \Phi_2(u)  = \left\lbrace \begin{matrix}
		c_2u^2 & \mathrm{if} \ u\leq u_2\\
		u^{p_2} \log^{\alpha_2 p_2} (u) &\mathrm{if} \ u > u_2
	\end{matrix} \right.
	\end{equation*}
	where $ c_1,c_2,u_1 $ and $ u_2 $ were chosen as in \eqref{1exPhi} so that $ \Phi_1 $ and $ \Phi_2 $ are both  nice Young functions. If either 
	\begin{equation}\label{1ex3cond1}
		2<p_1 <p_2 \qquad \mathrm{or} \qquad 2<p_1=p_2,  \alpha_1<\alpha_2,
	\end{equation}
	then we have
	\[ \lim_{x \geq 1} \frac{\Phi_2(u)}{\Phi_1(u)} = \infty. \]
	As in previous examples, Theorem \ref{M3} implies that there exists a $ \Lambda(\Phi_1) $-set which is not a $ \Lambda(\Phi_2) $-set for any $ \Phi_2 $ that satisfies \eqref{1ex3cond1}. Also, for $ p_1, p_2 >4 $, there exists a $ \Lambda(\Phi_1) $-set in $ Q $ which is not a $ \Lambda(\Phi_2) $-set for any $ \Phi_2 $ that satisfies \eqref{1ex3cond1}.
\end{ex}
We introduce basic notations and properties of Orlicz spaces in Section \hyperref[sec:2]{2}, which will be used throughout the paper. In Section \hyperref[sec:3]{3}, we study properties of $ \Lambda(\Phi) $-sets and prove Theorem \ref{M1}. In Section \hyperref[sec:4]{4}, we prove Theorem \ref{M2} by using Talagrand's argument in \cite{Tal14} and \cite{Tal21}. Finally, in Section \hyperref[sec:5]{5}, we derive the Littlewood-Paley estimate in Orlicz spaces. Then, we combine this estimate with Theorem \ref{M1} and Theorem \ref{M2} to prove Theorem \ref{M3}.
\begin{acknowledgement}
	The author would like to thank his advisor Alex Iosevich for productive discussions of this work and encouragement. The author also would like to thank Michel Talagrand, Allan Greenleaf and Kabe Moen for helpful conversations. The author is grateful to the anonymous referee for valuable comments and suggestions.
\end{acknowledgement}  
\section{Preliminaries}\label{sec:2}
We recall some properties of Orlicz spaces. With each Young function $ \Phi $, we can associate another Young function $ \Psi : [0, \infty) \rightarrow [0, \infty] $ defined by
\[ \Psi(v) = \sup_{u\geq 0}( u|v| - \Phi(u) ). \]
The function $ \Psi $ is called a \textit{complementary function} to $ \Phi $ and $ (\Phi, \Psi) $ is called a \textit{complementary Young pair}. If $(\Phi, \Psi)$ is a complementary Young pair, for $f \in L^\Phi(\bT) $, we have
	\begin{equation}\label{2duality}
		\norm{f}_{\Phi} \approx \sup_{g} \int |fg|dx
	\end{equation}
where the supremum was taken over all $ g $ such that $ \int \Psi(|g|) dx \leq 1 $. 
\begin{lem}\cite[Section 2.1, Proposition 1]{RR91}
	Let $ (\Phi,\Psi) $ be a complementary Young pair of nice Young functions. Their inverses $ \Phi^{-1}$ and $ \Psi^{-1} $ are well
	 defined and
	\begin{equation}\label{2eqprod}
		u \leq \Phi^{-1}(u) \Psi^{-1}(u)\leq 2u.
	\end{equation} 
\end{lem}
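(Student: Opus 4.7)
The proof will rest on two standard ingredients: (i) Young's inequality $uv \le \Phi(u) + \Psi(v)$, which is an immediate consequence of the Legendre-type definition $\Psi(v) = \sup_{u\ge 0}(uv - \Phi(u))$; and (ii) the monotonicity of the secant slope $u \mapsto \Phi(u)/u$, which follows from $\Phi$ being convex with $\Phi(0)=0$. Before invoking these I first check that $\Phi^{-1}$ and $\Psi^{-1}$ are well defined. Since $\Phi$ is continuous, satisfies $\Phi(u)=0$ iff $u=0$, and tends to $\infty$, convexity forces $\Phi$ to be strictly increasing, hence a continuous bijection $[0,\infty)\to[0,\infty)$. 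The $N$-function hypotheses $\lim_{u\to 0}\Phi(u)/u = 0$ and $\lim_{u\to \infty}\Phi(u)/u = \infty$ guarantee that $\Psi$ is also a nice Young function (a standard fact from \cite{RR91}), so the same argument gives a well-defined $\Psi^{-1}$.

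The upper bound is the easy half: substituting $u=\Phi^{-1}(t)$ and $v=\Psi^{-1}(t)$ into Young's inequality gives
\[
\Phi^{-1}(t)\,\Psi^{-1}(t) \ \le\ \Phi(\Phi^{-1}(t)) + \Psi(\Psi^{-1}(t)) \ =\ t+t \ =\ 2t.
\]

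For the lower bound, fix $t>0$ and set $a = \Phi^{-1}(t)$, so $\Phi(a)=t$. My plan is to prove the single inequality $\Psi(\Phi(a)/a) \le t$. Once that is in hand, the monotonicity of $\Psi^{-1}$ gives $\Phi(a)/a \le \Psi^{-1}(t)$, and multiplying through by $a$ yields $t = \Phi(a) \le a\,\Psi^{-1}(t) = \Phi^{-1}(t)\,\Psi^{-1}(t)$. To bound $\Psi(\Phi(a)/a) = \sup_{u\ge 0}\bigl(u\Phi(a)/a - \Phi(u)\bigr)$, I split the supremum at $u=a$. For $u\ge a$, the monotonicity of $\Phi(u)/u$ gives $\Phi(u)/u \ge \Phi(a)/a$, so $u\Phi(a)/a - \Phi(u)\le 0$. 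For $0\le u \le a$, $\Phi(u)\ge 0$ and $u\Phi(a)/a \le a\Phi(a)/a = t$, so the expression is again at most $t$. Therefore $\Psi(\Phi(a)/a) \le t$.

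The main obstacle is the lower bound, since the upper bound is a one-line application of Young's inequality. The temptation would be to argue by exhibiting $u,v$ attaining equality in Young's inequality (which requires $v$ to lie in the subdifferential of $\Phi$ at $u$), but a nice Young function need not be differentiable. The monotonicity-of-slope argument above sidesteps this altogether by evaluating the supremum that defines $\Psi(\Phi(a)/a)$ directly, which I expect to be the cleanest route.
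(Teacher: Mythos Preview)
Your proof is correct. The paper does not supply its own proof of this lemma; it simply cites \cite[Section 2.1, Proposition 1]{RR91}, so there is no in-paper argument to compare against. Your argument is essentially the standard one (and close to what appears in Rao--Ren): the upper bound is Young's inequality evaluated at $\Phi^{-1}(t),\Psi^{-1}(t)$, and for the lower bound you show $\Psi(\Phi(a)/a)\le \Phi(a)$ for $a=\Phi^{-1}(t)$ by splitting the defining supremum at $u=a$ and using the monotonicity of $u\mapsto \Phi(u)/u$. Your remark that this avoids any appeal to (sub)differentiability is apt; the textbook proof typically passes through the right derivative $\phi$ of $\Phi$ and the equality case $u\phi(u)=\Phi(u)+\Psi(\phi(u))$, which your route renders unnecessary. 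One cosmetic point: the inequality at $t=0$ is trivial since $\Phi^{-1}(0)=\Psi^{-1}(0)=0$, so the restriction to $t>0$ in your lower-bound step is harmless.
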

\begin{lem}\cite[Section 3.3, Proposition 1]{RR91}
	If $ f \in L^{\Phi}(\bT) $ and $ g \in L^{\Psi}(\bT) $ where $ (\Phi, \Psi) $ is a complementary pair of Young functions, we have
	\begin{equation}\label{2Holder}
		\int|fg| dx \lesssim \norm{f}_{\Phi} \norm{g}_{\Psi}.
	\end{equation}
\end{lem}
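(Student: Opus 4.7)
The plan is to reduce the claim to the pointwise Young inequality, which is essentially built into the definition of $\Psi$. From $\Psi(v) = \sup_{u \geq 0}(uv - \Phi(u))$ one immediately reads off
\[ uv \leq \Phi(u) + \Psi(v) \qquad \text{for all } u, v \geq 0. \]
This is the one nontrivial input; everything after it is a normalization argument.

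Assuming WLOG that $\norm{f}_\Phi$ and $\norm{g}_\Psi$ are both finite and positive, I would apply the pointwise Young inequality with $u = |f(x)|/\norm{f}_\Phi$ and $v = |g(x)|/\norm{g}_\Psi$ and integrate over $\bT$ to get
\[ \frac{1}{\norm{f}_\Phi \norm{g}_\Psi} \int |fg|\, dx \leq \int \Phi\!\left(\frac{|f|}{\norm{f}_\Phi}\right) dx + \int \Psi\!\left(\frac{|g|}{\norm{g}_\Psi}\right) dx. \]
The goal is then to show each integral on the right is at most $1$, which would yield $\int |fg|\, dx \leq 2 \norm{f}_\Phi \norm{g}_\Psi$, giving the desired $\lesssim$ bound with implicit constant $2$.

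The one mildly technical step is verifying that $\int \Phi(|f|/\norm{f}_\Phi)\, dx \leq 1$. For every $k > \norm{f}_\Phi$ the definition of the Luxemburg norm gives $\int \Phi(|f|/k)\, dx \leq 1$. Choosing $k_n \searrow \norm{f}_\Phi$, the integrands $\Phi(|f|/k_n)$ are nondecreasing in $n$ (since $\Phi$ is nondecreasing) and converge pointwise to $\Phi(|f|/\norm{f}_\Phi)$ (since $\Phi$ is continuous), so monotone convergence delivers the bound. The same argument handles the $\Psi$ integral. The main obstacle, such as it is, is this limit passage: it requires $\Phi$ and $\Psi$ to be continuous and finite-valued on $[0,\infty)$, which is exactly the nice Young function setting that the paper adopts.
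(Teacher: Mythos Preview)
Your argument is correct and is precisely the classical proof: Young's inequality $uv \le \Phi(u)+\Psi(v)$ followed by normalization and the bound $\int \Phi(|f|/\norm{f}_\Phi)\,dx \le 1$. The paper does not supply its own proof of this lemma but simply cites \cite[Section 3.3, Proposition 1]{RR91}, where exactly this argument appears, so your proposal matches the intended proof. (Your closing caveat is slightly overcautious: convexity already gives left-continuity of $\Phi$ on the interior of its effective domain, which together with Fatou's lemma is enough to pass to the limit even for general Young functions, not only nice ones.)
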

We define the \textit{equivalence of nice Young functions} by $ \Phi_1 \sim \Phi_2 $, if there exist constants $ 0 < c_1< c_2 < \infty $ and $ u_0 \geq 0 $ such that
\begin{equation}\label{2defequiv}
	\Phi_1(c_1u) \leq \Phi_2(u) \leq \Phi_1(c_2u) \qquad \mathrm{if} \ u \geq u_0.
\end{equation}
Nice Young functions $ \Phi_1 $ and $ \Phi_2 $ are equivalent in the sense that $ \norm{f}_{\Phi_1} \approx \norm{f}_{\Phi_2} $.
\begin{lem}\label{presmall}
	Let $ \Phi_1 $ and $ \Phi_2 $ be nice Young functions.\\
	(1) Suppose that there exists $u_0>0$ such that $ \Phi_1(u) \leq \Phi_2(u) $ for $ u \geq u_0 $. Then we have $ \norm{f}_{\Phi_1} \lesssim \norm{f}_{\Phi_2} $.\\
	(2) Suppose that there exists $u_0>0$ such that $\Phi_1(u)=\Phi_2(u)$ for $u \geq u_0$. Then we have $ \norm{f}_{\Phi_1} \approx \norm{f}_{\Phi_2} $.
\end{lem}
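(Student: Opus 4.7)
The plan for part (1) is to compare the Luxemburg norms directly by splitting the unit integral that appears in the definition at the threshold $u_0$, using the finite Lebesgue measure of $\bT$ to absorb the small-value portion on which the pointwise hypothesis does not apply. Part (2) will then follow by applying (1) twice with $\Phi_1$ and $\Phi_2$ interchanged.

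In more detail, I would proceed as follows. We may assume $k := \norm{f}_{\Phi_2} < \infty$, since otherwise there is nothing to prove. By the definition of the Luxemburg norm (and Fatou's lemma applied along a sequence $k_n \searrow k$ of admissible levels), one has the unit bound $\int_\bT \Phi_2(|f|/k)\,dx \leq 1$. Decompose $\bT = A \cup B$ with $A := \{x \in \bT : |f(x)|/k \geq u_0\}$ and $B := \bT \setminus A$. On $A$ the hypothesis $\Phi_1(u) \leq \Phi_2(u)$ for $u \geq u_0$ applies pointwise to $|f|/k$, giving $\Phi_1(|f|/k) \leq \Phi_2(|f|/k)$; on $B$ the monotonicity of $\Phi_1$ gives $\Phi_1(|f|/k) \leq \Phi_1(u_0)$. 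Summing,
\[ \int_\bT \Phi_1(|f|/k)\,dx \;\leq\; 1 + \Phi_1(u_0)\,|\bT| \;=:\; M. \]

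The next step is to convert this $O(1)$ integral bound into a Luxemburg norm bound via convex rescaling. Since $\Phi_1$ is convex with $\Phi_1(0)=0$, the inequality $\Phi_1(u/\lambda) \leq \Phi_1(u)/\lambda$ holds for every $\lambda \geq 1$. Choosing $\lambda := \max(M,1)$ and integrating yields $\int_\bT \Phi_1(|f|/(\lambda k))\,dx \leq 1$, so that $\norm{f}_{\Phi_1} \leq \lambda k = \lambda\,\norm{f}_{\Phi_2}$. The constant $\lambda$ depends only on $\Phi_1$ and on $|\bT|$, which establishes $\norm{f}_{\Phi_1} \lesssim \norm{f}_{\Phi_2}$. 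For part (2), the same argument with the roles of $\Phi_1$ and $\Phi_2$ exchanged gives the reverse inequality, and the equivalence $\norm{f}_{\Phi_1} \approx \norm{f}_{\Phi_2}$ follows.

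I do not anticipate any serious obstacle here. The only mildly delicate point is the passage from the defining infimum of $\norm{f}_{\Phi_2}$ to the equality $\int \Phi_2(|f|/\norm{f}_{\Phi_2})\,dx \leq 1$, which is the standard consequence of monotone/Fatou convergence for the Luxemburg functional; the remainder is just convexity and the finiteness of $|\bT|$.
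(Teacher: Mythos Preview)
Your argument is correct. The splitting at the level set $\{|f|/k \geq u_0\}$, the use of the finite measure of $\bT$ to control the small-value part, and the convex rescaling $\Phi_1(u/\lambda)\leq \Phi_1(u)/\lambda$ for $\lambda\geq 1$ are all valid and combine exactly as you describe. The only cosmetic point is that you should explicitly dispose of the case $k=\norm{f}_{\Phi_2}=0$ (where $f=0$ a.e.\ and the conclusion is trivial), since dividing by $k$ requires $k>0$.

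By comparison, the paper does not prove part (1) at all: it simply cites Theorem~3 in Section~5.1 of Rao--Ren, and then observes that (2) follows from (1) by symmetry, just as you do. Your write-up is therefore more self-contained than the paper's; the underlying idea (controlling the small-argument region using finiteness of the measure) is the standard one behind the cited result, so there is no real divergence in strategy, only in level of detail.
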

\begin{proof}
    The statement (1) is Theorem 3 in \cite[Section 5.1]{RR91} and (2) easily follows from (1).
\end{proof}
Indices $\alpha_{\Phi}^i$ and $\beta_{\Phi}^i$ are well defined and finite not only when $\Phi$ is a nice Young function but also when 
\begin{equation}\label{2phicond}
    \Phi:[0,\infty) \rightarrow [0,\infty) \ \mathrm{is \ increasing, \ continuous, \ unbounded \ and \ } \Phi(0)=0.
\end{equation}
Thus, we have the following properties of $\alpha_{\Phi}^i$ and $\beta_{\Phi}^i$.
\begin{lem}\label{propab} Let $\Phi: [0,\infty) \rightarrow [0,\infty) $ satisfies \eqref{2phicond}.\\
	(1) If $ \Phi_1 \sim \Phi_2 $, then $ \alpha_{\Phi_1}^\infty = \alpha_{\Phi_2}^\infty $ and $ \beta_{\Phi_1}^\infty = \beta_{\Phi_2}^\infty $. If $ x_0 =0 $ in \eqref{2defequiv} and $ \Phi_1 \sim \Phi_2 $, then $ \alpha_{\Phi_1}^a = \alpha_{\Phi_2}^a $ and $ \beta_{\Phi_1}^a = \beta_{\Phi_2}^a $.\\
	(2) Let $ \Phi^{-1} $ be the inverse of $ \Phi $. Then,
	\begin{equation}\label{2invab}
		\alpha_{\Phi^{-1}}^i = \frac{1}{\beta_{\Phi}^i} \qquad  \beta_{\Phi^{-1}}^i = \frac{1}{\alpha_{\Phi}^i} \qquad \mathrm{for} \ i = a, \infty.
	\end{equation}
	(3) Let $0 < \alpha_{\Phi}^\infty < \infty$. If $u \geq 1$, for any $\epsilon>0$, we have
	\begin{equation}\label{2aint}
	    \int_1^u s^{-(\alpha_{\Phi}^\infty -\epsilon)-1} \Phi(s) ds \lesssim_\epsilon u^{-(\alpha_{\Phi}^\infty -\epsilon)} \Phi(u).
	\end{equation}
	(4) We have
	\begin{equation}\label{2eqMpf1}
	    \alpha_{\Phi}^\infty = \sup\{ p>0 : \sup_{u, t \geq 1} \frac{\Phi(u)t^p}{\Phi(tu)} < \infty \}, \qquad   \beta_{\Phi}^\infty = \inf\{ p>0 : \inf_{u, t \geq 1} \frac{\Phi(u)t^p}{\Phi(tu)} > 0 \}.
	\end{equation}
\end{lem}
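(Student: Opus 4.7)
The plan is to prove part (4) first, since it converts the Matuszewska--Orlicz indices into uniform growth estimates that drive parts (2) and (3). For the $\alpha$-identity, one direction is immediate: if $\sup_{u, t \geq 1} \Phi(u) t^p / \Phi(tu) = C < \infty$, the substitution $s = 1/t$, $v = tu$ yields $M^{\infty}(s, \Phi) \leq C s^p$ for $s \in (0,1)$, so $\log M^{\infty}(s,\Phi)/\log s \to \alpha_{\Phi}^\infty \geq p$. Conversely, if $p < \alpha_{\Phi}^\infty$, choose $p' \in (p, \alpha_{\Phi}^\infty)$ so that $M^{\infty}(s, \Phi) \leq s^{p'}$ for $s$ sufficiently small; the submultiplicativity $M^{\infty}(s_1 s_2, \Phi) \leq M^{\infty}(s_1, \Phi)\, M^{\infty}(s_2, \Phi)$ extends this to $M^{\infty}(s, \Phi) \lesssim s^{p'}$ for all $s \in (0,1)$. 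Translating back to the $(u,t)$-formulation gives $\Phi(u) t^p / \Phi(tu) \leq C'$ for $u$ large and $t \geq 1$, and the asymptotic lower bound $\Phi(t) \gtrsim t^{p''}$ for any $p'' < \alpha_{\Phi}^\infty$ handles the remaining bounded range of $u$. The $\beta$-identity is dual.

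For part (1), if $\Phi_1(c_1 u) \leq \Phi_2(u) \leq \Phi_1(c_2 u)$ for $u \geq u_0$, then for $u$ sufficiently large,
\[
\frac{\Phi_2(tu)}{\Phi_2(u)} \leq \frac{\Phi_1(c_2 t u)}{\Phi_1(c_1 u)} = \frac{\Phi_1\big((c_2/c_1)\, t \cdot c_1 u\big)}{\Phi_1(c_1 u)},
\]
so $M^{\infty}(t, \Phi_2) \leq M^{\infty}((c_2/c_1)\, t, \Phi_1)$ and symmetrically $M^{\infty}(t, \Phi_2) \geq M^{\infty}((c_1/c_2)\, t, \Phi_1)$. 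Dividing $\log$ by $\log t$ as $t \to \infty$ or $t \to 0^+$ absorbs the constant factor into an $O(1/|\log t|)$ term, giving $\alpha_{\Phi_1}^\infty = \alpha_{\Phi_2}^\infty$ and $\beta_{\Phi_1}^\infty = \beta_{\Phi_2}^\infty$. The $i = a$ version uses the same chain of inequalities but demands $u_0 = 0$ so that the estimates are uniform in $u > 0$.

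For part (2), set $v = \Phi^{-1}(u)$ and write $\Phi^{-1}(tu) = k v$, so $k = k(v, t)$ is determined by $\Phi(kv)/\Phi(v) = t$. Applying part (4), for $p < \alpha_{\Phi}^\infty$ and $q > \beta_{\Phi}^\infty$ we obtain $k^{p}/C_p \leq \Phi(kv)/\Phi(v) \leq C_q\, k^{q}$ uniformly for $k, v \geq 1$, whence $c\, t^{1/q} \leq k \leq C\, t^{1/p}$. The upper bound yields $M^{\infty}(t, \Phi^{-1}) \leq C\, t^{1/p}$ and thus $\beta_{\Phi^{-1}}^\infty \leq 1/\alpha_{\Phi}^\infty$ upon letting $p \to \alpha_{\Phi}^\infty$. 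For the matching lower bound, I exploit the failure of the supremum in (4) at any exponent $p' > \alpha_{\Phi}^\infty$: along extremal sequences $(u_n, t_n)$ with $\Phi(u_n) t_n^{p'}/\Phi(t_n u_n) \to \infty$, setting $T_n := \Phi(t_n u_n)/\Phi(u_n)$ gives $k(u_n, T_n) = t_n \gg T_n^{1/p'}$, and the lower bound from part (4) at any $p_0 < \alpha_{\Phi}^\infty$ forces $T_n \geq t_n^{p_0}/C \to \infty$, so $M^{\infty}(T_n, \Phi^{-1}) \gtrsim T_n^{1/p'}$ along the subsequence, yielding $\beta_{\Phi^{-1}}^\infty \geq 1/p'$. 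Letting $p' \to \alpha_{\Phi}^\infty$ closes the gap. The case $t < 1$ needed for $\alpha_{\Phi^{-1}}^\infty$ reduces to $k' = 1/k > 1$ via the reformulation $\Phi(v)/\Phi(v/k') = 1/t$, giving $\alpha_{\Phi^{-1}}^\infty = 1/\beta_{\Phi}^\infty$; the $M^a$ versions are analogous with $\sup$ replacing $\limsup$.

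For part (3), apply part (4) with the shifted exponent $p = \alpha_{\Phi}^\infty - \epsilon/2$: for $1 \leq s \leq u$ this gives $\Phi(s)/\Phi(u) \lesssim_\epsilon (s/u)^{\alpha_{\Phi}^\infty - \epsilon/2}$. Substituting into the integral,
\[
\int_1^u s^{-(\alpha_{\Phi}^\infty - \epsilon) - 1}\Phi(s)\,ds \lesssim_\epsilon u^{-(\alpha_{\Phi}^\infty - \epsilon/2)}\, \Phi(u) \int_1^u s^{\epsilon/2 - 1}\,ds \lesssim_\epsilon u^{-(\alpha_{\Phi}^\infty - \epsilon)}\Phi(u),
\]
the downshift from $\epsilon$ to $\epsilon/2$ being exactly what makes $s^{\epsilon/2 - 1}$ integrate to a $u^{\epsilon/2}$ factor that cancels the loss in the prefactor. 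The main obstacle in the whole argument is the converse direction of part (4): upgrading the $\limsup$-in-$v$ bound on $M^{\infty}(s, \Phi)$ to a genuine uniform-in-$u$ lower bound on $\Phi(tu)/\Phi(u)$ requires the submultiplicativity-based extension sketched above, together with careful handling of the bounded range of $(u,t)$ where the asymptotic does not yet apply.
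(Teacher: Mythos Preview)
The paper does not actually prove this lemma: it simply cites Maligranda's monograph (Theorems 11.4, 11.5, 11.8, 11.13) for parts (1)--(4) respectively. Your proposal, by contrast, supplies a self-contained argument, and the strategy of proving the characterization (4) first and then deducing (1)--(3) from it is both correct and efficient. In particular, your proof of (4) via submultiplicativity of $M^\infty$ together with the iteration at a single fixed scale $s_0$ is sound (and gives the uniform threshold $V_0$ needed to pass from the $\limsup$ formulation to the uniform $u,t\geq 1$ bound), and parts (1) and (3) follow cleanly from it.

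There is one genuine gap, in the lower bound $\beta_{\Phi^{-1}}^\infty \geq 1/\alpha_\Phi^\infty$ of part (2). You produce an extremal sequence $(u_n,t_n)$ with $\Phi(u_n)t_n^{p'}/\Phi(t_n u_n)\to\infty$ and set $T_n=\Phi(t_n u_n)/\Phi(u_n)$, then claim $M^\infty(T_n,\Phi^{-1})\gtrsim T_n^{1/p'}$. But $M^\infty(T_n,\Phi^{-1})$ is a $\limsup$ over $w\to\infty$, and the single witness you have is $w_n=\Phi(u_n)$; nothing in your argument forces $u_n\to\infty$ (the sup in (4) can blow up along a sequence with $u_n$ bounded and $t_n\to\infty$), so $w_n$ may stay bounded and the claimed lower bound on $M^\infty(T_n,\Phi^{-1})$ does not follow. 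The fix is immediate once you notice that $\Phi^{-1}$ also satisfies \eqref{2phicond}, so your part (4) applies to it as well: the $\inf$-characterization $\beta_{\Phi^{-1}}^\infty=\inf\{q:\inf_{w,T\geq 1}\Phi^{-1}(w)T^q/\Phi^{-1}(Tw)>0\}$ lets you feed the sequence $(w_n,T_n)$ directly into the infimum (no $\limsup$ in $w$ required), and the substitution $w=\Phi(u)$, $Tw=\Phi(tu)$ then shows that the first set in \eqref{2eqMpf1} for $\Phi$ and the second set for $\Phi^{-1}$ are reciprocals of each other, giving both inequalities at once. The same device handles $\alpha_{\Phi^{-1}}^\infty=1/\beta_\Phi^\infty$ without the separate $t<1$ discussion.
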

\begin{proof}
    The proofs are in \cite{Mal89}. They are Theorem 11.4, Theorem 11.5, Theorem 11.8 and Theorem 11.13 respectively.
\end{proof}
We can also obtain the following relations between $M_i(t,\Phi)$ and $\alpha_{\Phi}^i$, $\beta_{\Phi}^i$.
\begin{lem}\label{2propM}
    Let $\Phi: [0,\infty) \rightarrow [0,\infty) $ satisfies \eqref{2phicond}. For any $\epsilon >0$, we have
    \begin{equation}\label{2eqM2_1}
    	\sup_{0<t \leq 1\leq u} \frac{\Phi(u)}{\Phi(u/t)t^{\alpha_\Phi^\infty - \epsilon}} \lesssim_\epsilon 1.
    \end{equation}
    Also, when $u \geq 1$,
    \begin{equation}\label{2eqM3}
        u^{\alpha_{\Phi}^\infty -\epsilon} \lesssim_\epsilon \Phi(u) \lesssim_\epsilon u^{\beta_\Phi^\infty +\epsilon}.    
    \end{equation}
\end{lem}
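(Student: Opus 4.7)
The plan is to reduce both displays to the characterizations of the Matuszewska--Orlicz indices given in \eqref{2eqMpf1} of Lemma \ref{propab}.

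For \eqref{2eqM2_1}, I would change variables by setting $s := 1/t \geq 1$ and $v := u \geq 1$, which recasts the expression inside the supremum as
\[
\frac{\Phi(v)\, s^{\alpha_\Phi^\infty - \epsilon}}{\Phi(sv)}.
\]
Since $p := \alpha_\Phi^\infty - \epsilon$ is strictly smaller than $\alpha_\Phi^\infty$, the first identity in \eqref{2eqMpf1} says exactly that $\sup_{v,\, s \geq 1} \Phi(v) s^p / \Phi(sv)$ is finite, with a bound depending only on $\epsilon$ and $\Phi$. This is \eqref{2eqM2_1}.

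For \eqref{2eqM3}, pick any $u_0 \geq 1$ with $\Phi(u_0) > 0$; such $u_0$ exists because $\Phi$ is unbounded and $\Phi(0)=0$. For the lower bound, take $p = \alpha_\Phi^\infty - \epsilon$ and, for $u \geq u_0$, apply the first part of \eqref{2eqMpf1} with $v = u_0$ and $s = u/u_0 \geq 1$; rearranging the resulting inequality $\Phi(u_0) (u/u_0)^p / \Phi(u) \leq C$ gives $\Phi(u) \gtrsim_\epsilon u^p$. On the bounded range $1 \leq u \leq u_0$ the estimate is trivial after adjusting the implicit constant, since $u^p \leq u_0^p$ and $\Phi$ is monotone and positive for $u$ close to $u_0$. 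The upper bound is the mirror image: apply the second part of \eqref{2eqMpf1} with $p = \beta_\Phi^\infty + \epsilon$, the same choice $v = u_0$, $s = u/u_0$, and use the $\inf > 0$ conclusion to obtain $\Phi(u) \lesssim_\epsilon u^p$ on $[u_0, \infty)$, with the range $[1, u_0]$ handled as before.

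There is essentially no substantive obstacle; the lemma is close to a direct translation of \eqref{2eqMpf1} once one performs the variable substitution $s = 1/t$ in the first inequality and pivots through a fixed base point $u_0$ in the second. The only minor care needed is in selecting $u_0$ so that the compact range $1 \leq u \leq u_0$ can be absorbed into constants depending on $\Phi$ and $\epsilon$.
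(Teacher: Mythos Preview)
Your proposal is correct and follows essentially the same approach as the paper: the paper also obtains \eqref{2eqM2_1} by the substitution $t \mapsto 1/t$ in \eqref{2eqMpf1}, and derives \eqref{2eqM3} by specializing \eqref{2eqMpf1} to a fixed base point (the paper simply takes $u_0 = 1$, whereas you allow a general $u_0 \geq 1$). The only cosmetic difference is your extra paragraph about absorbing the range $[1,u_0]$, which the paper omits by working directly at $u_0=1$.
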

\begin{proof}
    When we replace $t$ by $1/t$ in \eqref{2eqMpf1}, we obtain \eqref{2eqM2_1}.\\
    If $t \geq 1$, we obtain from \eqref{2eqMpf1} that 
    \begin{equation*}
        \frac{\Phi(1) t^{\alpha_\Phi^\infty - \epsilon}}{\Phi(t) }\leq \sup_{u,t \geq 1} \frac{\Phi(u) t^{\alpha_\Phi^\infty - \epsilon}}{\Phi(tu)} \lesssim_\epsilon 1,    
    \qquad    \frac{\Phi(1) t^{\beta_\Phi^\infty + \epsilon}}{\Phi(t) }\geq \inf_{u,t \geq 1} \frac{\Phi(u) t^{\beta_\Phi^\infty - \epsilon}}{\Phi(tu)} \gtrsim_\epsilon 1.
    \end{equation*}
    Hence, we have \eqref{2eqM3}.
\end{proof}
Let us define the following indices.
	\[ p_{\Phi}^\infty \coloneqq \liminf_{u \rightarrow \infty} \frac{u \Phi'(u)}{\Phi(u)},  \qquad q_{\Phi}^\infty \coloneqq \limsup_{u \rightarrow \infty} \frac{u \Phi'(u)}{\Phi(u)}, \]
	\[ p_{\Phi}^a \coloneqq\inf_{u > 0} \frac{u \Phi'(u)}{\Phi(u)},  \qquad q_{\Phi}^a \coloneqq \sup_{u >0} \frac{u \Phi'(u)}{\Phi(u)}. \]
Unlike Lemma \ref{propab} (1), $p_{\Phi}^i$ and $q_{\Phi}^i$ does not guarantee that $p_{\Phi_1}^i = p_{\Phi_2}^i$ or $q_{\Phi_1}^i = q_{\Phi_2}^i$ when $\Phi_1 \sim \Phi_2$, see \cite[93p]{Mal89}. However, these indices have the following useful properties.
\begin{lem}\label{proppq}
	Let $\Phi$ satisfies \eqref{2phicond}.\\
	(1) For $i=a,\infty$, we have
	\begin{equation}\label{2pabq}
		p_{\Phi}^i \leq \alpha_{\Phi}^i \leq \beta_{\Phi}^i \leq q_{\Phi}^i. 	
	\end{equation}
	(2) Let $ \Phi $ be a nice Young function. The function $ \Phi $ is in $ \Delta_2 $ if and only if $  q_\Phi^\infty <\infty $ and the function $\Phi$ is in $\nabla_2$ if and only if $1 < p_{\Phi}^\infty$\\
	(3) If $ \Phi \in \Delta_2 \cap \nabla_2 $, then there exists a nice Young function $ \Phi_1 $ such that $ 1 < p_{\Phi_1}^a \leq q_{\Phi_1}^a< \infty $ and $ \Phi_1(u) = \Phi(u) $ when $  u \geq u_0 $ for some $ u_0 $.
\end{lem}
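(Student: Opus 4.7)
The plan is to handle the three parts in order, leveraging throughout the conversion between pointwise bounds on $u\Phi'(u)/\Phi(u)$ and multiplicative bounds on $\Phi(tu)/\Phi(u)$. One direction uses the identity $(\log\Phi)'(u) = \Phi'(u)/\Phi(u)$ and integration on logarithmic intervals; the other uses the convexity bounds $\Phi(tu) - \Phi(u) \geq (t-1)u\Phi'(u)$ and $\Phi(tu) - \Phi(u) \leq (t-1)u\Phi'(tu)$ for $t \geq 1$. For part (1), fix $p < p_\Phi^\infty$; by the definition of $\liminf$ there is $u_\ast$ with $u\Phi'(u)/\Phi(u) \geq p$ for all $u \geq u_\ast$, and integrating $(\log\Phi)'(u) \geq p/u$ from $u$ to $tu$ gives $\Phi(tu)/\Phi(u) \geq t^p$ when $t \geq 1$ and $u \geq u_\ast$. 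The substitution $u \mapsto tu$, $t \mapsto 1/t$ produces the reverse bound $\Phi(tu)/\Phi(u) \leq t^p$ for $0 < t \leq 1$ and $u$ large, and taking $\limsup_{u\to\infty}$ yields $M^\infty(t,\Phi) \leq t^p$ for $t < 1$, hence $\log M^\infty(t,\Phi)/\log t \geq p$ and so $\alpha_\Phi^\infty \geq p$. Since $p$ is arbitrary, $p_\Phi^\infty \leq \alpha_\Phi^\infty$; a symmetric argument gives $\beta_\Phi^\infty \leq q_\Phi^\infty$, and $\alpha_\Phi^i \leq \beta_\Phi^i$ is already noted in the paper. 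The case $i=a$ is identical but needs no largeness of $u$, as $p_\Phi^a$ and $q_\Phi^a$ are pointwise infima and suprema.

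For part (2) each of the four implications uses one of the two maneuvers above on the interval $[u,2u]$. If $q_\Phi^\infty < \infty$, pick $q > q_\Phi^\infty$; integration gives $\Phi(2u) \leq 2^q \Phi(u)$ for large $u$, so $\Phi \in \Delta_2$. Conversely, $\Phi(2u) \leq K_1 \Phi(u)$ together with the convexity lower bound $\Phi(2u) - \Phi(u) \geq u\Phi'(u)$ forces $u\Phi'(u)/\Phi(u) \leq K_1 - 1$ for large $u$, hence $q_\Phi^\infty \leq K_1 - 1$. For $\nabla_2$: if $p_\Phi^\infty > 1$, pick $p \in (1, p_\Phi^\infty)$; integration gives $\Phi(2u)/\Phi(u) \geq 2^p > 2$, so $\Phi \in \nabla_2$. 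Conversely, $\Phi(2u) \geq K_2 \Phi(u)$ with $K_2 > 2$ yields $\Phi(2u) - \Phi(u) \geq (K_2-1)\Phi(2u)/K_2$, which combined with the convexity upper bound $\Phi(2u) - \Phi(u) \leq u\Phi'(2u)$ gives $2u\Phi'(2u)/\Phi(2u) \geq 2(K_2-1)/K_2 > 1$, so $p_\Phi^\infty > 1$.

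For part (3), (2) gives $1 < p_\Phi^\infty$ and $q_\Phi^\infty < \infty$. Choose $p \in (1, p_\Phi^\infty)$ and $u_0$ large enough that $u\Phi'(u)/\Phi(u) \geq p$ for all $u \geq u_0$, and set
\[
\Phi_1(u) = \begin{cases} \Phi(u_0)(u/u_0)^p, & 0 \leq u \leq u_0, \\ \Phi(u), & u \geq u_0. \end{cases}
\]
This is continuous at $u_0$, and convex there because the left derivative $p\Phi(u_0)/u_0$ does not exceed the right derivative $\Phi'(u_0)$ by the choice of $u_0$, so $\Phi_1$ is a nice Young function agreeing with $\Phi$ on $[u_0,\infty)$. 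On $[0,u_0]$, $u\Phi_1'(u)/\Phi_1(u) \equiv p$; on $[u_0,\infty)$ it equals $u\Phi'(u)/\Phi(u) \geq p$, so $p_{\Phi_1}^a \geq p > 1$. For the upper bound, $q_\Phi^\infty < \infty$ bounds the ratio for $u$ large, while on any compact sub-interval of $[u_0,\infty)$ it is bounded because $\Phi'$ is nondecreasing and $\Phi \geq \Phi(u_0) > 0$, giving $q_{\Phi_1}^a < \infty$. The main point of care is this convexity check at the splice $u_0$: one must pick $p$ strictly below $p_\Phi^\infty$ and then choose $u_0$ large enough that the slope of the inserted power function at $u_0$ does not exceed $\Phi'(u_0)$. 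Everything else is routine once the integration identity of part (1) and the convexity slope inequalities of part (2) are in hand.
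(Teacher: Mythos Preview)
Your argument is correct. For parts (1) and (2) the paper simply cites Maligranda \cite{Mal89} and Rao--Ren \cite{RR91} respectively, so you have supplied the standard self-contained proofs that those references contain; the integration of $(\log\Phi)'$ and the convexity slope inequalities are exactly the right tools.

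For part (3) your construction and the paper's are essentially the same, with one cosmetic difference worth noting. The paper first fixes $u_0$ large enough that \emph{both} bounds $1 < p_\Phi^\infty - \epsilon \leq u\Phi'(u)/\Phi(u) \leq q_\Phi^\infty + \epsilon < \infty$ hold on $[u_0,\infty)$, and then sets $p = u_0\Phi'(u_0)/\Phi(u_0)$, so that $\Phi_1'(u_0^-) = \Phi'(u_0)$ exactly and convexity at the splice is automatic. You instead fix $p < p_\Phi^\infty$ first and then choose $u_0$, which forces you to check the splice inequality $p\Phi(u_0)/u_0 \leq \Phi'(u_0)$ and to handle the upper bound on the compact piece $[u_0,u_1]$ separately (via $\Phi'$ nondecreasing and $\Phi \geq \Phi(u_0) > 0$). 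Both routes work; the paper's choice of $p$ is slightly slicker because it makes the derivative continuous and delivers the two-sided bound on all of $[u_0,\infty)$ in one stroke.
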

\begin{proof}
    The statement (1) is Theorem 11.11 in \cite{Mal89} and the statement (2) is Corollary 4 in Section 2.3 of  \cite{RR91}. \\
    For (3), it follows from the statement (2) that we can choose $ \epsilon >0 $ and $ u_0 $ such that $ \Phi'(u) $ is well defined at $ u_0 $ and
	\[ 1 < p_{\Phi}^\infty -\epsilon \leq \frac{u\Phi'(u)}{\Phi(u)} \leq q_{\Phi}^\infty +\epsilon < \infty \qquad \mathrm{if } \ u \geq u_0. \]
	Let $ p = \frac{u_0 \Phi'(u_0)}{\Phi (u_0)} $ and choose $ c $ such that $ \Phi(u_0) = cu_0^p $. Let
	\[ \Phi_1(u) = \left\lbrace \begin{matrix}
		cu^p & \mathrm{if} \ u \leq u_0\\
		\Phi(u) & \mathrm{if} \ u > u_0
	\end{matrix} \right. .  \]
	Since $ \Phi_1'(u_0) = \Phi'(u_0) $, $ \Phi_1 $ is a nice Young function and
	\[ 1 < p_\Phi^\infty -\epsilon \leq \frac{u\Phi'(u)}{\Phi(u)} \leq q_{\Phi}^\infty + \epsilon < \infty \]
	for all $ u>0 $.
\end{proof}
We introduce additional properties of a nice Young function $\Phi$ such that $\Phi \in \Delta_2$ or $\Phi \in \nabla_2$ that we need later.
\begin{lem}\label{2delta_2(1+delta)}\cite[Section 2.2, Theorem 3]{RR91}
	Let $ \Phi $ be a nice Young function. Then, $ \Phi \in \Delta_2 $ if and only if there eixsts $ \delta>0 $ and $ u_0 \geq 0 $ such that $ \Phi((1+\delta) u) \leq 2 \Phi(u) $ where $ u \geq u_0 $. 
\end{lem}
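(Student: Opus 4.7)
The plan is to prove both implications by exploiting the convexity of $\Phi$ together with the fact that $\Phi(0)=0$, which in particular gives $\Phi(2u) \geq 2\Phi(u)$ and thus $K_1 \geq 2$ in any $\Delta_2$ bound $\Phi(2u) \leq K_1\Phi(u)$.

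For the forward implication, I would assume $\Phi \in \Delta_2$, so that $\Phi(2u) \leq K_1 \Phi(u)$ for $u \geq u_1$, and write $(1+\delta)u$ as a convex combination of $u$ and $2u$. Specifically, for $\delta \in (0,1]$, the identity $(1+\delta)u = (1-\delta)u + \delta(2u)$ together with convexity of $\Phi$ yields
\begin{equation*}
\Phi((1+\delta)u) \leq (1-\delta)\Phi(u) + \delta\Phi(2u) \leq \bigl(1 + \delta(K_1 - 1)\bigr)\Phi(u)
\end{equation*}
for $u \geq u_1$. Since $K_1 \geq 2$, choosing $\delta \leq 1/(K_1-1)$ forces the right-hand side to be at most $2\Phi(u)$, which gives the desired inequality with $u_0 = u_1$.

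For the reverse implication, I would iterate the hypothesis. Assuming $\Phi((1+\delta)u) \leq 2\Phi(u)$ for $u \geq u_0$, the fact that $1 + \delta > 1$ ensures $(1+\delta)^k u \geq u_0$ for all $k \geq 0$ whenever $u \geq u_0$, so induction on $k$ gives
\begin{equation*}
\Phi((1+\delta)^n u) \leq 2^n \Phi(u) \qquad \text{for all } u \geq u_0.
\end{equation*}
Choosing the smallest $n \in \bN$ with $(1+\delta)^n \geq 2$ and using that $\Phi$ is increasing, I conclude $\Phi(2u) \leq \Phi((1+\delta)^n u) \leq 2^n \Phi(u)$ for $u \geq u_0$, so $\Phi \in \Delta_2$ with constants $K_1 = 2^n$ and $u_1 = u_0$.

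Neither direction is expected to pose a real obstacle: the only mild subtlety is verifying that $K_1 \geq 2$ in the forward direction, so that $\delta$ can be chosen in $(0,1]$ and the convex combination $(1-\delta)u + \delta(2u)$ is legitimate; this is immediate from convexity and $\Phi(0)=0$. Everything else reduces to elementary manipulation of the convexity inequality and iteration.
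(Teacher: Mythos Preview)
Your proof is correct. Both directions are handled cleanly: the forward direction via the convex combination $(1+\delta)u = (1-\delta)u + \delta(2u)$ works because you correctly observe $K_1 \geq 2$ (from $\Phi(2u) \geq 2\Phi(u)$, which holds for any convex $\Phi$ with $\Phi(0)=0$), so $\delta = 1/(K_1-1) \in (0,1]$ is a legitimate choice; the reverse direction by iteration is standard and correct.

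The paper itself gives no proof of this lemma and simply cites \cite[Section 2.2, Theorem 3]{RR91}. Your argument is a self-contained elementary proof of the cited result, relying only on convexity and monotonicity of $\Phi$, so it supplies strictly more than what the paper provides.
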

\begin{lem}\label{5newconv}
	 Let $ \Phi $ be a nice Young function such that $ \Phi \in \Delta_2$ and $ 1<p < \alpha_\Phi^\infty < \infty $. There exists a nice Young function $ \tilde{\Phi} $ such that $ \Phi \sim \tilde{\Phi} $, $ \tilde{\Phi} \in \Delta_2 $ and $ \tilde{\Phi}(u^{1/p}) $ is convex.
\end{lem}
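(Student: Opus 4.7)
The strategy is to reduce the problem to convexifying a single-variable function through the substitution $u = v^{p}$. Set $F(u) \coloneqq \Phi(u^{1/p})$; if I can produce a convex function $\tilde{F}$ on $[0,\infty)$ with $\tilde{F} \sim F$ on $[1,\infty)$, then $\tilde{\Phi}(v) \coloneqq \tilde{F}(v^{p})$ will satisfy $\tilde{\Phi}(u^{1/p}) = \tilde{F}(u)$, which is exactly the desired convexity. The available input is $p < \alpha_{\Phi}^{\infty}$, which through the characterization \eqref{2eqMpf1} yields, upon fixing $p' \in (p, \alpha_{\Phi}^{\infty})$, the quantitative bound $\Phi(u)/u^{p'} \leq C\,\Phi(w)/w^{p'}$ for $1 \leq u \leq w$. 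Pulling back through $F$, this becomes $F(s)/s^{p'/p} \leq C\,F(t)/t^{p'/p}$ for $1 \leq s \leq t$, and since $p'/p > 1$ I further deduce $F(s)/s \leq C\,F(t)/t$ in the same range.

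To build $\tilde{F}$, define $f(s) \coloneqq \sup_{1 \leq t \leq s} F(t)/t$ for $s \geq 1$, which is nondecreasing and satisfies $F(s)/s \leq f(s) \lesssim F(s)/s$. Then set
\[
\tilde{F}(u) = \begin{cases} F(1)\, u & \text{if } 0 \leq u \leq 1,\\ F(1) + \int_{1}^{u} f(s)\, ds & \text{if } u \geq 1,\end{cases}
\]
which is continuous, is $C^{1}$ at $u = 1$ (both one-sided slopes equal $F(1)$), and is globally convex since $\tilde{F}'$ is nondecreasing. The upper bound $\tilde{F}(u) \lesssim F(u)$ on $[1,\infty)$ combines $f(s) \lesssim F(s)/s$ with $F(s) \leq C\,F(u)(s/u)^{p'/p}$ to get
\[
\int_{1}^{u} \frac{F(s)}{s}\,ds \lesssim F(u)\,u^{-p'/p}\int_{0}^{u} s^{p'/p - 1}\,ds \lesssim F(u),
\]
while the lower bound uses $\tilde{F}(u) \geq \int_{u/2}^{u} F(s)/s\,ds \geq F(u/2)/2 \gtrsim F(u)$, the last step being $\Phi \in \Delta_{2}$ transported to $F$ via $F(2u) = \Phi(2^{1/p} u^{1/p}) \leq \Phi(2\, u^{1/p})$.

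Finally, define $\tilde{\Phi}(v) \coloneqq \Phi(1)\,v^{p}$ on $[0,1]$ and $\tilde{\Phi}(v) \coloneqq \tilde{F}(v^{p})$ on $[1,\infty)$. The two pieces meet $C^{1}$ at $v = 1$ with common value $\Phi(1)$ and common slope $p\,\Phi(1)$; the derivative is nondecreasing on each piece (polynomial on $[0,1]$; a product of two nondecreasing factors on $[1,\infty)$), so $\tilde{\Phi}$ is convex; the limits $\tilde{\Phi}(v)/v \to 0$ at $0$ (from $p > 1$) and $\to \infty$ at $\infty$ make it a nice Young function; and the equivalence $\tilde{\Phi} \sim \Phi$ together with $\tilde{\Phi} \in \Delta_{2}$ follow immediately from $\tilde{F} \sim F$ and $\Phi \in \Delta_{2}$. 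The convexity of $\tilde{\Phi}(u^{1/p})$ is now built in: it is linear on $[0,1]$ and equals $\tilde{F}(u)$ on $[1,\infty)$, joined $C^{1}$ at $u = 1$. The main thing to monitor is the upper bound $\tilde{F} \lesssim F$, which genuinely needs $p'/p > 1$ so that $\int_{1}^{u} s^{p'/p - 1}\,ds$ scales like $u^{p'/p}$ without a logarithmic correction---this is precisely where the strict inequality $p < \alpha_{\Phi}^{\infty}$ enters.
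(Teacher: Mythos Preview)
Your proof is correct and follows essentially the same strategy as the paper's. Both arguments build $\tilde\Phi$ by integrating a nondecreasing function comparable to $\Phi(s)/s$, use the hypothesis $p<\alpha_\Phi^\infty$ (via the characterization \eqref{2eqMpf1}) for the upper bound $\tilde\Phi\lesssim\Phi$, and use $\Phi\in\Delta_2$ via the ``$\int_{u/2}^{u}$'' trick for the lower bound. The only notable difference is the explicit construction: the paper sets $\Phi_p(u)=u^p+u^p\int_1^u s^{-p-1}\Phi(s)\,ds$ and $\tilde\Phi(u)=\int_0^u \Phi_p(s)/s\,ds$, then verifies directly that $(\tilde\Phi(u^{1/p}))'=\tfrac1p+\tfrac1p\int_1^{u^{1/p}}s^{-p-1}\Phi(s)\,ds$ is nondecreasing; you instead pass to $F(u)=\Phi(u^{1/p})$, force monotonicity of $F(s)/s$ by taking the running supremum $f(s)=\sup_{1\le t\le s}F(t)/t$, and integrate. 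Your route is slightly more elementary and makes the role of the quasi-increase of $F(s)/s$ (equivalently, of $\Phi(s)/s^{p'}$) more transparent, while the paper's explicit formula avoids the supremum step altogether; either way the analytic content is the same.
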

\begin{proof}
    We modify the proof of Theorem 1.7 in \cite{Mal00} which considers $\alpha_{\Phi}^a$, instead of $\alpha_{\Phi}^\infty$. Let
    \begin{equation*}
        \Phi_p(u) = \left\lbrace \begin{matrix}
		u^p & \mathrm{if} \ u \leq 1,\\
		u^p + u^p \int_1^u s^{-p-1} \Phi(s)ds & \mathrm{if} \ u > 1
	\end{matrix} \right. 
    \end{equation*}
    and 
    \begin{equation*}
        \tilde{\Phi}(u) = \int_0^u \frac{\Phi_p(s)}{s}ds.
    \end{equation*}
    Then, we obtain
    \begin{equation*}
        (\tilde{\Phi}(u^{1/p}))' = \left\lbrace \begin{matrix}
		\frac{1}{p} & \mathrm{if} \ u \leq 1,\\
		\frac{1}{p} +\frac{1}{p} \int_1^{u^{1/p}} s^{-p-1} \Phi(s)ds & \mathrm{if} \ u > 1
	\end{matrix} \right. .
    \end{equation*}
    Thus, $\tilde{\Phi}(u^{1/p})$ is convex. First, we prove that $\Phi_p \approx \Phi$ when $u$ is sufficiently large.  By \eqref{2aint} and \eqref{2eqM3}, we obtain
    \begin{equation*}
            \Phi_p (u)\lesssim u^p + \Phi(u) \lesssim \Phi(u).
    \end{equation*}
    Conversely, when $u $ is sufficiently large, we obtain
    \begin{equation*}
        \Phi_p(u) \geq u^p \int_{u/2}^u s^{-p-1}\Phi(s) ds \gtrsim \Phi(u/2) \gtrsim \Phi(u).
    \end{equation*}
    In the last inequality, we used the definition of the condition that $\Phi \in \Delta_2$. When we combine the fact that $\Phi_p \approx \Phi $ with the assumption that $\Phi$ is a nice Young function such that $\Phi \in \Delta_2 $, we obtain that $\Phi_p$ is also a nice Young function such that $\Phi_p \in \Delta_2 $ and $\Phi_p \sim \Phi$.\\
    Similarly, it suffices to show that $\tilde{\Phi} \approx \Phi_p$ when $u$ is sufficiently large. Since $\Phi$ is a Young function, we have
    \begin{equation*}
        \tilde{\Phi}(u) \lesssim \int_0^u \Phi_p'(s)ds = \Phi_p(u).
    \end{equation*}
    Conversely, since $\Phi_p \in \Delta_2$, we have
    \begin{equation*}
        \tilde{\Phi}(u) \geq \int_{u/2}^u \frac{\Phi_p(s)}{s} ds \geq \Phi_p(u/2) \gtrsim \Phi_p(u).
    \end{equation*}
\end{proof}
\begin{lem}\label{5inv_est}
	Let $ \Phi_1, \Phi_2 $ be nice Young functions such that $ \Phi_1,\Phi_2 \in \Delta_2 $. If $ \sup_{u \geq 1} $ ${\Phi_2(u)} / {\Phi_1(u)} = \infty $, then $ \sup_{u \geq 1} {\Phi_1^{-1}(u)}/{\Phi_2^{-1}(u)} = \infty $.
\end{lem}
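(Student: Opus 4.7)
The plan is to argue by contrapositive: assume that $M \coloneqq \sup_{u \geq 1} \Phi_1^{-1}(u)/\Phi_2^{-1}(u)$ is finite, and show that then $\Phi_2(v)/\Phi_1(v)$ must stay bounded for $v \geq 1$, contradicting the hypothesis. The idea is purely the $\Delta_2$ doubling property, which allows one to trade bounds on a function for bounds on its inverse up to constants.

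First I would substitute $u = \Phi_1(v)$ into the inequality $\Phi_1^{-1}(u) \leq M\,\Phi_2^{-1}(u)$, which is valid once $\Phi_1(v) \geq 1$, i.e., $v \geq \Phi_1^{-1}(1)$. Since $\Phi_2$ is strictly increasing (a nice Young function with $\Phi_2(0)=0$, $\Phi_2>0$ on $(0,\infty)$, convex, has $\Phi_2(u)/u$ strictly increasing, hence $\Phi_2$ is strictly increasing), this rearranges to
\[
	\Phi_2(v/M) \leq \Phi_1(v)
\]
for all sufficiently large $v$.

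Next I would exploit $\Phi_2 \in \Delta_2$ to convert $\Phi_2(v/M)$ into a constant multiple of $\Phi_2(v)$. Pick an integer $k$ with $2^k \geq M$. Iterating the $\Delta_2$ condition $\Phi_2(2u) \leq K_1 \Phi_2(u)$ for $u \geq u_1$ a total of $k$ times yields
\[
	\Phi_2(v) \leq K_1^k \Phi_2(v/2^k) \leq K_1^k \Phi_2(v/M)
\]
as soon as $v/2^k \geq u_1$. Combining with the previous step gives $\Phi_2(v) \leq K_1^k \Phi_1(v)$ for all $v$ past some threshold $v_0$, and on the compact remainder $[1, v_0]$ the ratio $\Phi_2/\Phi_1$ is automatically bounded by continuity and positivity of $\Phi_1, \Phi_2$. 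Hence $\sup_{v \geq 1} \Phi_2(v)/\Phi_1(v) < \infty$, the desired contradiction.

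I expect no serious obstacle; the only bookkeeping concerns matching the thresholds in the substitution $u = \Phi_1(v)$ and in the iterated $\Delta_2$ estimate, but both produce only large-$v$ restrictions that are harmless because $\Phi_2/\Phi_1$ is bounded on compacta. Note in particular that the argument only uses $\Phi_2 \in \Delta_2$; the hypothesis $\Phi_1 \in \Delta_2$ is not actually needed for this direction, although it is natural in the surrounding context.
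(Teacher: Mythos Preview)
Your proof is correct. It differs from the paper's in two respects. First, you argue by contrapositive, while the paper argues directly: given large $B$, it finds $u$ with $\Phi_2(u) > B\Phi_1(u)$, rewrites this as $\Phi_1^{-1}(v/B) \geq \Phi_2^{-1}(v)$ for $v = \Phi_2(u)$, and then uses $\Phi_1 \in \Delta_2$ (in the equivalent form $\Phi_1((1+\delta)u) \leq 2\Phi_1(u)$ from Lemma~\ref{2delta_2(1+delta)}) to show $\Phi_1^{-1}(v)/\Phi_1^{-1}(v/B) \geq (1+\delta)^k$ when $2^k \leq B$. Second, and more interestingly, the two proofs invoke different halves of the hypothesis: the paper uses only $\Phi_1 \in \Delta_2$, whereas your argument uses only $\Phi_2 \in \Delta_2$. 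So together the two arguments show that either $\Delta_2$ assumption alone suffices; your closing remark that $\Phi_1 \in \Delta_2$ is unnecessary is correct for your route, and symmetrically the paper's route does not need $\Phi_2 \in \Delta_2$.
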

\begin{proof}
	For any large $B$, we can find sufficiently large $ u $ such that $ {\Phi_2(u)}/{\Phi_1(u) } >B $ by the assumption. Therefore, $ \Phi_1^{-1}({u}/{B}) \geq \Phi_2^{-1}(u) $. Since $ \Phi_1 \in \Delta_2 $, Lemma \ref{2delta_2(1+delta)} implies that $ \Phi_1((1+\delta) u ) \leq 2 \Phi_1(u) $ when $ u \geq u_0 $ for some $ \delta >0 $ and $ u_0 \geq 0 $. Thus, $ \Phi_1^{-1}(u) \leq  \Phi_1^{-1}(2u)/(1+\delta) $. If $ 2^k \leq  B  < 2^{k+1} $, we obtain
	\[ \Phi_2^{-1}(u) \leq \Phi_1^{-1}\bigg(\frac{u}{B}\bigg) \leq \Phi_1^{-1} \bigg( \frac{u}{2^k}\bigg) \leq \Phi_1^{-1} (u) \bigg( \frac{1}{1+\delta} \bigg)^k. \]
	Therefore,
	\[ (1+\delta)^k \leq \frac{\Phi_1^{-1}(u)}{\Phi_2^{-1}(u)}. \]
	As $ B $ increases, so does $ (1+\delta)^k $.
\end{proof}
\section{\texorpdfstring{Properties of $ \Lambda(\Phi)  $}{Lambda(phi)}-sets }\label{sec:3}
The following results are analogues of Theorem 1.4 and Theorem 3.5 in \cite{Rud60} respectively. Arguments are similar to the proofs in \cite{Rud60} except that we consider in terms of Orlicz spaces.
\begin{thm}\label{thmRud}
	Assume that $ \Phi,\Phi_1$ and $\Phi_2 $ are nice Young functions. If 
	\begin{equation}\label{3eqas1}
		\norm{f}_{\Phi} \lesssim \norm{f}_{\Phi_1}
	\end{equation}
	for some $ \Phi_1 $ such that $ \beta_{\Phi_1} < \alpha_{\Phi} $, then
	\[ \norm{f}_{\Phi} \lesssim \norm{f}_{\Phi_2} \]
	for any $ \Phi_2 $. 
\end{thm}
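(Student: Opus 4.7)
The plan is to bootstrap the hypothesis to an $L^1(\bT)$ estimate via classical H\"older interpolation, and then dominate $L^1$ by $\norm{\cdot}_{\Phi_2}$ for any nice Young function $\Phi_2$ on the torus. The strict gap $\beta_{\Phi_1}^\infty < \alpha_\Phi^\infty$ is exactly the wedge that allows a power function $u^r$ to sit between $\Phi_1$ and $\Phi$.

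I would begin by noting that for any nice Young function $\Psi$, convexity through the origin yields $\Psi(tu) \leq t\Psi(u)$ on $t \in [0,1]$, so $M^\infty(t,\Psi) \leq t$ and hence $\alpha_\Psi^\infty \geq 1$. In particular $\beta_{\Phi_1}^\infty \geq \alpha_{\Phi_1}^\infty \geq 1$, so I can pick $r,p$ with $\beta_{\Phi_1}^\infty < r < p < \alpha_\Phi^\infty$, automatically obtaining $r>1$. Choosing $\epsilon$ small in \eqref{2eqM3} gives $\Phi_1(u) \lesssim u^r$ and $u^p \lesssim \Phi(u)$ on $u \geq 1$. Lemma \ref{presmall}(1), applied to constant multiples of the nice Young functions $u^r$ and $u^p$, then produces the norm comparisons
\begin{equation*}
\norm{f}_{\Phi_1} \lesssim \norm{f}_r \qquad \text{and} \qquad \norm{f}_p \lesssim \norm{f}_\Phi.
\end{equation*}

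Next, since $1 < r < p$, classical H\"older interpolation on $\bT$ gives $\norm{f}_r \leq \norm{f}_1^\theta \norm{f}_p^{1-\theta}$ where $1/r = \theta + (1-\theta)/p$ and $\theta \in (0,1)$. Combining this with the hypothesis \eqref{3eqas1} and the comparisons above yields the chain
\begin{equation*}
\norm{f}_\Phi \lesssim \norm{f}_{\Phi_1} \lesssim \norm{f}_r \leq \norm{f}_1^\theta \norm{f}_p^{1-\theta} \lesssim \norm{f}_1^\theta \norm{f}_\Phi^{1-\theta},
\end{equation*}
and dividing through by $\norm{f}_\Phi^{1-\theta}$ produces the self-improved estimate $\norm{f}_\Phi \lesssim \norm{f}_1$. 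This is the Orlicz analogue of Rudin's bootstrap.

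Finally, for an arbitrary nice Young function $\Phi_2$, superlinearity $\Phi_2(u)/u \to \infty$ provides $u_0 = u_0(\Phi_2)$ with $u \leq \Phi_2(u)$ on $u \geq u_0$; setting $k = \norm{f}_{\Phi_2}$ and splitting $\int_\bT |f|/k\,dx$ over $\{|f| \leq u_0 k\}$ and its complement yields $\int_\bT |f|/k\,dx \leq 2\pi u_0 + 1$, so $\norm{f}_1 \lesssim \norm{f}_{\Phi_2}$ with constant depending on $\Phi_2$. Chaining this with the previous step completes the proof. The main obstacle is the first step: transferring the index hypothesis into a power sandwich sharp enough for real interpolation, while ensuring $r > 1$ so that the $L^1$ endpoint is reachable. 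The strict inequality $\beta_{\Phi_1}^\infty < \alpha_\Phi^\infty$ is exactly what creates room for such an $r$; without it no power function sits strictly between $\Phi_1$ and $\Phi$ and the bootstrap collapses. Everything afterward is standard real interpolation together with the monotonicity lemma \ref{presmall}.
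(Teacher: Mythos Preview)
Your proof is correct and follows essentially the same route as the paper's: sandwich power functions $u^r$ and $u^p$ between $\Phi_1$ and $\Phi$ using the index gap and \eqref{2eqM3}, apply H\"older's interpolation between $L^1$ and $L^p$ to bootstrap the hypothesis to $\norm{f}_\Phi \lesssim \norm{f}_1$, then dominate $\norm{f}_1$ by $\norm{f}_{\Phi_2}$ via superlinearity. Your explicit justification that $r>1$ (from $\alpha_{\Phi_1}^\infty \geq 1$ for nice Young functions) is a detail the paper leaves implicit, but otherwise the arguments coincide.
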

\begin{proof}
We can choose $ p $ and $ \epsilon> 0$ such that $ \beta_{\Phi_1} < p < p+\epsilon < \alpha_{\Phi} $. By \eqref{2eqM3}, for sufficiently large $u$, we have
\begin{equation*}
	\Phi_1(u) \lesssim u^p < u^{p+\epsilon} \lesssim \Phi(u) 
\end{equation*}
Let $ \theta= (p-1) / (p+\epsilon -1) $ and $ \theta' = \epsilon / (p+\epsilon-1) $. The constant $ \theta $ is nonzero and finite since $ 1< p<\infty $.  By Lemma \ref{presmall} (1), we have
\begin{equation}\label{3eq1}\begin{split}
		\norm{f}_{\Phi_1}^p&\lesssim \norm{f}_p^p= \int |f|^{(p+\epsilon) \theta} |f|^{\theta'} dx \\
		&\leq \bigg( \int|f|^{p+\epsilon} dx \bigg)^{\theta} \bigg(  \int |f| dx\bigg)^{\theta'} = \norm{f}_{p+\epsilon}^{(p+\epsilon)\theta} \norm{f}_1^{\theta'}.
	\end{split}
\end{equation}
Lemma \ref{presmall} (1) and \eqref{3eqas1} implies that
\begin{equation}\label{3eq2}	
	\norm{f}_{p+\epsilon} \lesssim \norm{f}_{\Phi} \lesssim \norm{f}_{\Phi_1}.
\end{equation}
By \eqref{3eq1} and \eqref{3eq2}, we obtain
\[ \norm{f}_{\Phi_1} \lesssim \norm{f}_1. \]
Since $ \Phi_2 $ is a nice Young function, $ u \lesssim \Phi_2(u) $ if $ u \geq u_0 $ for some $u_0$. Therefore, 
\[ \norm{f}_{\Phi_1} \lesssim \norm{f}_1 \lesssim \norm{f}_{\Phi_2}. \] 
\end{proof}
Theorem \ref{thmRud} implies that \eqref{1defnLambda} and \eqref{1defnLambda2} are equivalent when $ \alpha_{\Phi}^\infty >2 $.
\begin{prop}\label{thmcard}
	Assume that $ \Phi $ is a nice Young function such that $ \alpha_{\Phi}^\infty >2 $. For any positive integer $ N $, we let $ a,b \in \bZ $ such that $ b \neq 0 $ and $ A_S(N,a,b) $ be a number of terms in a set $S \cap \{ a+b, a+2b, \cdots, a+Nb \}   $. Let $ A_S(N) := \sup_{a,b \in \bZ} A_S(N,a,b) $. If $ S $ is a $ \Lambda(\Phi) $-set, then
	\[ A_S(N) \lesssim (\Phi^{-1}(N))^2. \]
\end{prop}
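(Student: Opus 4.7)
The plan is to adapt the Rudin duality argument from \cite[Theorem~3.5]{Rud60} to the Orlicz setting, using a Fej\'er-type test function rather than the Dirichlet kernel. Fix $a, b \in \bZ$ with $b \neq 0$, set $P = \{a+b, \ldots, a+Nb\}$ and $E = S \cap P$. Since $\alpha_{\Phi}^{\infty} > 2$, Theorem~\ref{thmRud} gives the equivalent formulation \eqref{1defnLambda2}, so $f(x) = \sum_{n \in E} e^{2\pi i n x}$ satisfies $\|f\|_{\Phi} \lesssim |E|^{1/2}$. Writing $\Psi$ for the complementary Young function to $\Phi$, pairing $f$ with a test function $g$ whose Fourier transform is supported in $P$ and at least $\tfrac12$ on a sub-progression $P' \subseteq P$ of length $\gtrsim N$, and applying H\"older's inequality \eqref{2Holder}, yields $|E \cap P'| \lesssim \|f\|_{\Phi}\,\|g\|_{\Psi} \lesssim |E \cap P'|^{1/2}\,\|g\|_{\Psi}$. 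Covering $P$ by $O(1)$ translates of $P'$ reduces the problem to bounding $\|g\|_{\Psi}$.

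For $g$ I take the shifted Fej\'er kernel
\[
g(x) = e^{2\pi i c x}\,F_M(bx), \qquad F_M(y) = \sum_{|n|\leq M-1}\Bigl(1 - \tfrac{|n|}{M}\Bigr) e^{2\pi i n y},
\]
with $M = \lfloor N/2 \rfloor$ and $c$ an approximate midpoint of $P$. Then $\hat g(c + nb) = (1 - |n|/M)_{+}$ and vanishes otherwise, so $\hat g$ is supported in $P$ and at least $\tfrac12$ on $P' = \{c + nb : |n| \leq M/2\}$. Since $|g(x)| = F_M(bx)$ and $F_M$ has period $1$, the change of variable $y = bx \bmod 1$ gives $\|g\|_{\Psi} = \|F_M\|_{\Psi}$, reducing the task to the sharp Orlicz estimate $\|F_M\|_{\Psi} \lesssim \Phi^{-1}(M) \approx \Phi^{-1}(N)$.

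For this central estimate I use the pointwise bound $|F_M(x)| \lesssim \min\bigl(M,\,(M\|x\|^{2})^{-1}\bigr)$ (with $\|x\|$ the distance from $x$ to $\bZ$), set $k = C\,\Phi^{-1}(M)$, and split $\int_{\bT}\Psi(|F_M|/k)\,dx$ into three regions via the thresholds $\|x\| \approx 1/M$ and $\|x\| \approx (Mk)^{-1/2}$. The plateau region $\|x\| \lesssim 1/M$ contributes $(1/M)\,\Psi(M/k) \lesssim 1/C$ using the convexity bound $\Psi(v/C) \leq \Psi(v)/C$ and the duality identity $\Phi^{-1}(M)\,\Psi^{-1}(M) \approx M$ from \eqref{2eqprod}. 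The intermediate region $1/M \lesssim \|x\| \lesssim (Mk)^{-1/2}$, after the substitution $u = 1/(Mkx^{2})$, becomes $(1/(2\sqrt{Mk}))\int_{1}^{L}\Psi(u)\,u^{-3/2}\,du$ with $L \approx \Psi^{-1}(M)/C$; integration by parts using $\Psi(u) \leq u\,\Psi'(u)$ (convexity) gives $\int_{1}^{L}\Psi(u)\,u^{-3/2}\,du \leq 2\,\Psi(L)/\sqrt{L}$, and substituting $\Psi(L) \leq M/C$ together with \eqref{2eqprod} shows this region is also $\lesssim 1/C$. The outer region contributes $O(\Psi(1)/k)$ via $\Psi(v) \leq v\,\Psi(1)$ for $v \leq 1$ and $\|F_M\|_{1} = 1$. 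Choosing $C$ large enough completes the bound $\|F_M\|_{\Psi} \lesssim \Phi^{-1}(M)$.

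The main obstacle is this Orlicz norm computation: using the Dirichlet kernel $D_P$ instead would yield an integrand of the form $\Psi(u)/u^{2}$, for which the integration-by-parts trick fails to produce the analogous bound in the regime $\alpha_{\Psi}^{\infty} = 1$, so $\|D_P\|_{\Psi}$ can exceed $\Phi^{-1}(N)$ for general non-power $\Phi$ (by a logarithmic factor already for $\Phi(u) = e^{u} - 1 - u$). The faster $1/(M\|x\|^{2})$ decay of $F_M$ is what makes the sharp bound $\|F_M\|_{\Psi} \lesssim \Phi^{-1}(N)$ accessible, and combining it with the pairing argument yields $|E| \lesssim (\Phi^{-1}(N))^{2}$; taking the supremum over $a,b$ then gives the proposition.
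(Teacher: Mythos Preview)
Your argument is correct and follows the same strategy as the paper: both adapt Rudin's duality argument with a shifted Fej\'er kernel, pair against $f=\sum_{n\in S\cap P}e^{2\pi i nx}$ via H\"older \eqref{2Holder}, and reduce everything to the sharp estimate $\|F_M\|_\Psi\lesssim\Phi^{-1}(M)$.

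The only substantive difference is in how that last estimate is obtained. Your three-region split and integration by parts via $\Psi(u)\le u\Psi'(u)$ works, but the paper does it in one line using concavity of $\Psi^{-1}$: from \eqref{2eqprod} one has $K_N(x)/(2\Phi^{-1}(N))\le \Psi^{-1}(N)/K(x)$ pointwise, where $K(x)=2$ for $|x|\le 1/N$ and $K(x)=2N^2x^2$ for $|x|>1/N$; since $K(x)\ge 1$ throughout, concavity gives $\Psi^{-1}(N)/K(x)\le \Psi^{-1}(N/K(x))$, so applying $\Psi$ collapses the integrand to $N/K(x)$, which integrates directly to $O(1)$. No splitting at the threshold $(Mk)^{-1/2}$ and no integration by parts is needed. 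The paper also takes the Fej\'er parameter equal to $N$ rather than $\lfloor N/2\rfloor$, so that $\widehat Q\ge 1/2$ on the entire progression $P$, making your covering-by-translates step unnecessary (and incidentally avoiding the slight slip where you wrote $|E\cap P'|^{1/2}$ but had only established $\|f\|_\Phi\lesssim|E|^{1/2}$; the covering still rescues this, but with the full-size kernel the issue never arises).
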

\begin{proof}
	Consider the Fej\'{e}r kernel $ K_N $, i.e.,
	\begin{equation}\label{Fejer}
	   K_N (x) = \sum_{|n| \leq N} \bigg( 1-\frac{|n|}{N} \bigg) e^{2\pi i nx} 
	\end{equation}
	and a set $ A = \{ a+b, a+2b, \cdots, a+Nb \} $. As in \cite{Rud60}, we claim that $A_S(N) \lesssim \norm{K_{N}}_{\Psi}^2$. Let $  m = a + \frac{N}{2}b $ or $ m = a+ \frac{N+1}{2}b $ depending on whether $ N $ is even or odd and let
	\[ Q (x) =e^{2\pi imx} K_N(bx) = \sum_{|n| \leq N} \bigg( 1-\frac{|n|}{N} \bigg) e^{2\pi i(m+nb) x }. \]
	It follows that
	\[ \widehat{Q}(n) \geq \frac{1}{2} \qquad \mathrm{for} \ n \in A. \]
	Let $ A \cap S $ consists of numbers $ n_1, \cdots, n_\alpha $ and consider a trigonometric polynomial 
	\[ f(x) = \sum_{j=1}^\alpha e^{2\pi i n_j x}. \]
	By \eqref{2Holder}, we have 
	\[ \begin{split}
		\frac{\alpha}{2}&\leq \sum_{j=1}^\alpha  \widehat{Q}(n_j) =  \int f(-x) Q(x)dx \lesssim \norm{f}_{\Phi} \norm{Q}_{\Psi}\\
		&\leq \norm{f}_{\Phi}\norm{K_N}_{\Psi}\lesssim \norm{f}_2 \norm{K_N}_{\Psi} \lesssim \sqrt{\alpha} \norm{K_N}_{\Psi}
	\end{split} \]
	where $ \Psi $ is a complementary Young function of $ \Phi $. In the second to last inequality, we used the assumption that $ S $ is a $ \Lambda(\Phi) $-set and $\alpha_{\Phi}^\infty >2$. Once we prove that
	\begin{equation}\label{3Psiest}
		\norm{K_N}_{\Psi} \lesssim \Phi^{-1}(N),
	\end{equation}
 	it implies that
 	\[ \alpha \lesssim \norm{K_N}_\Psi^2 \lesssim (\Phi^{-1}(N))^2 \]
 	for arbitrary $ A $. Therefore, $ A_S(N) \lesssim (\Phi^{-1}(N))^2 $.\\
 	Note that 
 	\[ K_N(x) \leq \left\lbrace  \begin{array}{cc}
 		N, & \mathrm{if} \ |x| \leq \frac{1}{N},\\
 		\frac{1}{Nx^2}, & \mathrm{if} \ |x| > \frac{1}{N}.
 	\end{array} \right.   \]
 	By \eqref{2eqprod}, we have
 	\begin{equation}\label{3comp}
 		\frac{1}{\Phi^{-1}(N)} \leq \frac{\Psi^{-1}(N)}{N}.
 	\end{equation}
 	Since $ \Phi $ is a nice Young function, so does $ \Psi $. If $ K \geq 1 $, by using concavity of $\Psi^{-1}$, we have
 	\begin{equation}\label{3conc}
 		 \frac{\Psi^{-1}(N)}{K}  \leq \Psi^{-1} \bigg(\frac{N}{K}\bigg).
 	\end{equation}
	For sufficiently large $ N $, \eqref{3comp} and \eqref{3conc} imply that
 	\[ \begin{split}
 		\int \Psi\bigg( \frac{K_N}{2 \Phi^{-1}(N)} \bigg) dx &\leq \int_{|x| \leq 1/N } \Psi\bigg( \frac{\Psi^{-1}(N)N}{2N} \bigg) dx +\int_{|x| \geq 1/N} \Psi\bigg( \frac{\Psi^{-1}(N)}{2 N^2x^2} \bigg) dx\\
 		&\leq \int_{|x| \leq 1/N} \Psi \bigg( \Psi^{-1} \bigg( \frac{N}{2} \bigg) \bigg)dx  +\int_{|x|\geq 1/N} \Psi\bigg( \Psi^{-1} \bigg(  \frac{N}{2N^2x^2} \bigg) \bigg) dx \\
 		&\leq \frac{1}{2} + \int_{|x|\geq 1/N} \frac{1}{2Nx^2}dx \leq 1.
 	\end{split} \]
 	Therefore, $ \norm{K_N}_{\Psi} \leq 2\Phi^{-1}(N) $.
\end{proof}
 Theorem \ref{M1} follows from Proposition \ref{thmcard}.
\section{Probabilistic estimate about \texorpdfstring{$ \Lambda(\Phi) $}{Lambda(Phi)}-sets}\label{sec:4}
In this section, we adapt Talagrand's argument in \cite[Section 16.6]{Tal14} and \cite[Section 19.3]{Tal21} to prove Theorem \ref{M2}. Let $ X $ be a Banach space of real valued functions on $ \bT $ and $ X^\ast $ be the dual space of $ X $. We denote by $ \norm{\cdot}_C $ a norm on $ X $ such that the unit ball of its dual norm is 
\[ X_{1,C}^\ast \coloneqq \{ g \in X^\ast : \norm{g}_{X^\ast} \leq 1 , \sum_{n \in E} |\widehat{g}(n)|^2 \leq C \}, \]
i.e.,
\[ \norm{f}_C = \sup_{g \in X_{1,C}^\ast }\int f (x) \overline{g(x )}dx. \]
Let $ J $ be a subset of $ E $ and denote
\[ \norm{U_J}_C \coloneqq \sup_{g \in X_{1,C}^\ast} \bigg( \sum_{n \in J} |\widehat{g}(n)|^2 \bigg)^{1/2}. \]
Consider a real number $ p \geq 2 $. A norm $ \norm{\cdot} $ of a Banach space is called \textit{$ p $-convex} if there exists a constant $ \eta >0 $ such that 
\begin{equation}\label{4defnconv}
		\norm{\frac{x+y}{2}} \leq 1- \eta \norm{x-y}^p
\end{equation}
for any $ \norm{x},\norm{y}\leq 1 $. Note that $\norm{\cdot}_p $ is $ 2 $-convex if $ p \leq 2 $. See \cite[63p]{LinTza}, \cite{Han56}.
\begin{lem}\label{TalProb}{\cite[Theorem  16.5.3]{Tal14}}, {\cite[Theorem  19.3.8]{Tal21}}
	Consider a Banach space $ X $ of real valued functions on $ \bT $ with a norm $ \norm{\cdot} $ such that $ X^\ast $ is $ 2 $-convex with corresponding constant $ \eta $ in \eqref{4defnconv}. Suppose that $ \norm{e^{2\pi i n x}} \leq 1 $ for all $ n \in E $. Then, there exists a number $ K(\eta) $ that depends only on $ \eta $ with the following property. Consider a number $ C $ and define $ B = \max(C, K(\eta)\log |E|) $. Consider a number $ \delta>0 $ and assume that for some $ \epsilon >0 $,
	\[ \delta \leq \frac{1}{B\epsilon |E|^\epsilon } \leq 1. \]
	For random variables $ (\xi_i)_{i \in E} $ and $J=\{i \in E : \xi_i=1 \}$ in Theorem \ref{M2}, we have
	\[ \bE(\norm{U_J}_C^2) \leq \frac{K(\eta)}{\epsilon}. \]
\end{lem}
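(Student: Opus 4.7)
The plan is to follow Talagrand's generic chaining argument from \cite[Section~16.6]{Tal14} and \cite[Section~19.3]{Tal21}, since this is exactly where the statement originates. The first step is to rewrite $\norm{U_J}_C^2 = \sup_{g \in X_{1,C}^\ast} \sum_{n \in E} \xi_n |\widehat{g}(n)|^2$ and split mean from fluctuation: the mean $\delta \sum_n |\widehat{g}(n)|^2 \leq \delta C$ is controlled directly by the hypothesis $\delta B \leq 1/\epsilon$, so the core task is to bound
\[
\bE \sup_{g \in X_{1,C}^\ast} \bigg| \sum_{n \in E} (\xi_n - \delta) |\widehat{g}(n)|^2 \bigg|.
\]
A symmetrization with independent Rademacher variables reduces this to a symmetric Bernoulli process indexed by $g \in X_{1,C}^\ast$.

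Second, I would establish two-sided Bernstein tail bounds for each fixed $g$, using that the variance is at most $\delta \sum_n |\widehat{g}(n)|^4 \leq \delta C$ (because $|\widehat{g}(n)| \leq \norm{g}_{X^\ast}\norm{e^{2\pi inx}} \leq 1$, so $\sum_n |\widehat{g}(n)|^4 \leq C$) and that the individual summands are bounded by $1$. This produces sub-Gaussian behavior at scale $\sqrt{\delta C}$ and sub-exponential behavior at scale $1$, which translates into a two-regime increment bound on the process when one compares values at two points $g_1, g_2 \in X_{1,C}^\ast$.

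Third, I would run Talagrand's generic chaining with the $\gamma_2$ and $\gamma_1$ functionals on $X_{1,C}^\ast$ to convert the pointwise tail bounds into a supremum estimate. The $2$-convexity of $X^\ast$ with parameter $\eta$ enters at exactly this point through a Pisier-type extrapolation, which controls the entropy numbers of the unit ball of $X^\ast$ intersected with a ball of $\ell^2(E)$; this is how the constant $K(\eta)$ is born, and how the cardinality $|E|$ appears through the term $K(\eta)\log|E|$ in the definition of $B$. The small factor $|E|^{-\epsilon}$ in the hypothesis on $\delta$ is precisely what is needed to sum the geometric chaining series and produce the $1/\epsilon$ factor in the conclusion.

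The main obstacle is the entropy estimate on $X_{1,C}^\ast$ forced by the $2$-convexity of $X^\ast$. Symmetrization and Bernstein are routine, and chaining is a standard machine once the entropy input is in hand, but converting the abstract $2$-convexity constant $\eta$ into a usable covering estimate for $X_{1,C}^\ast$ is the technical heart of Talagrand's argument, and here one has no real alternative to following \cite{Tal14, Tal21} closely.
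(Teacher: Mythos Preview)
The paper does not prove this lemma at all: it is quoted verbatim as \cite[Theorem~16.5.3]{Tal14} and \cite[Theorem~19.3.8]{Tal21} and then used as a black box in Proposition~\ref{4propC_1C_2}. So there is no ``paper's own proof'' to compare against; your outline is a faithful sketch of Talagrand's original argument (mean/fluctuation split, symmetrization, Bernstein increments, generic chaining with the $2$-convexity entropy input), which is exactly the source the paper cites.
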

\begin{prop} \label{4propC_1C_2}
	Let $ p_1 > \beta_{\Phi}^\infty $ and $ X = L^{p_1}(\bT) $. If all the assumptions of Theorem \ref{M2} hold and
	\[  C_1 = \frac{|E|^{1/2}}{\Phi^{-1}(\diam{E})}, \qquad C_2 = \frac{|E|\log |E|}{10(\Phi^{-1}(\diam{E}))^2} , \]
	then
	\begin{equation}\label{4expU_J}
		\bE( \norm{U_J}_{C_1}) \lesssim 1, \qquad \bE (\norm{U_J}_{C_2}) \lesssim \sqrt{\log |E|}.
	\end{equation}
\end{prop}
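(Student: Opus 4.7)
My plan is to apply Lemma \ref{TalProb} to the Banach space $X = L^{p_1}(\bT)$, once with $C = C_1$ and once with $C = C_2$. The structural hypotheses are automatic: since $p_1 > \beta_\Phi^\infty \geq \alpha_\Phi^\infty > 2$, the dual $X^\ast = L^{p_1'}(\bT)$ with $p_1'\leq 2$ is $2$-convex (Clarkson--Hanner) with some constant $\eta = \eta(p_1) > 0$, and $\norm{e^{2\pi i n x}}_{p_1} = 1$.

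The main work is a deterministic pre-estimate giving polynomial decay of $\delta C_1$ in $\diam{E}$. Since $K_{\Phi_0}(E)\geq\norm{e^{2\pi i n x}}_{\Phi_0}\gtrsim 1$, the hypothesis \eqref{M2cond1} forces $\Phi_0^{-1}(\diam{E})\lesssim |E|^{1/2}$. Combined with Lemma \ref{2propM} applied to both $\Phi$ and $\Phi_0$ with a small fixed parameter $\epsilon_0 < (\alpha_\Phi^\infty-\beta_{\Phi_0}^\infty)/2$, this yields
\[ \delta C_1 = \frac{\Phi^{-1}(\diam{E})}{|E|^{1/2}} \lesssim \frac{(\diam{E})^{1/(\alpha_\Phi^\infty-\epsilon_0)}}{(\diam{E})^{1/(\beta_{\Phi_0}^\infty+\epsilon_0)}} = (\diam{E})^{-\gamma}, \]
where $\gamma := 1/(\beta_{\Phi_0}^\infty+\epsilon_0) - 1/(\alpha_\Phi^\infty-\epsilon_0) > 0$ is strictly positive thanks to the spectral gap $\beta_{\Phi_0}^\infty < \alpha_\Phi^\infty$. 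Squaring the same estimates also gives $\delta \lesssim (\diam{E})^{-2\gamma}$.

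For the $C_1$ bound I take a \emph{constant} $\epsilon = \gamma/2$ in Lemma \ref{TalProb}. Using $|E|\leq\diam{E}$ and the estimates above,
\[ \delta B \cdot \epsilon |E|^\epsilon \lesssim \max(\delta C_1,\, K(\eta)\delta\log|E|)\cdot (\diam{E})^{\gamma/2} \lesssim (\diam{E})^{-\gamma/2}\log\diam{E}, \]
which is at most $1$ once $\diam{E}$ exceeds a threshold depending only on the data; the lower bound $B\epsilon|E|^\epsilon\geq 1$ is clear from $B\geq K(\eta)\log|E|$. Lemma \ref{TalProb} then gives $\bE(\norm{U_J}_{C_1}^2)\leq 2K(\eta)/\gamma$, and Jensen's inequality yields $\bE(\norm{U_J}_{C_1})\lesssim 1$; bounded $\diam{E}$ is trivial since $\norm{U_J}_{C_1}\leq\sqrt{C_1}$ is bounded. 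For $C_2$, the direct identity $\delta C_2 = (\log|E|)/10$ together with $\delta\leq 1$ yields $\delta B\lesssim\log|E|$; choosing $\epsilon = c/\log|E|$ for a sufficiently small $c = c(\eta) > 0$, both conditions of Lemma \ref{TalProb} are satisfied and the lemma gives $\bE(\norm{U_J}_{C_2}^2)\leq K(\eta)\log|E|/c$, so Jensen yields $\bE(\norm{U_J}_{C_2})\lesssim\sqrt{\log|E|}$.

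The main obstacle is obtaining the polynomial decay $\delta C_1\lesssim(\diam{E})^{-\gamma}$; the softer $\delta C_1\leq 1$ (which follows just from $\delta\leq 1$) is enough to run Lemma \ref{TalProb} with $\epsilon = O(1/\log|E|)$, but only yields a $\sqrt{\log|E|}$ bound. It is precisely the strict inequality $\beta_{\Phi_0}^\infty < \alpha_\Phi^\infty$, together with the operator-norm hypothesis \eqref{M2cond1}, that promotes the $C_1$-bound from $\sqrt{\log|E|}$ to $O(1)$ via a constant choice of $\epsilon$.
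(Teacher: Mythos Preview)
Your proposal is correct and follows essentially the same route as the paper: verify $2$-convexity of $X^\ast$, use \eqref{M2cond1} to get $\Phi_0^{-1}(\diam{E})\lesssim |E|^{1/2}$, exploit the index gap $\beta_{\Phi_0}^\infty<\alpha_\Phi^\infty$ via Lemma~\ref{2propM} to produce polynomial decay, and then apply Lemma~\ref{TalProb} with a fixed $\epsilon$ for $C_1$ and with $\epsilon\sim 1/\log|E|$ for $C_2$. The only organizational difference is that the paper first verifies $C_1,C_2\gtrsim\log|E|$ so that $B\approx C_i$, whereas you keep $B=\max(C_i,K(\eta)\log|E|)$ and bound both branches of the maximum; your closing remark on why the gap $\beta_{\Phi_0}^\infty<\alpha_\Phi^\infty$ is essential for upgrading the $C_1$-bound from $\sqrt{\log|E|}$ to $O(1)$ is a nice addition not spelled out in the paper.
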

\begin{proof}
	Since $ p_1 >2 $, $ X^\ast = (L^{p_1}(\bT))^\ast $ is $ 2 $-convex. Since $1 \lesssim  K_{\Phi_0}(E) $, \eqref{M2cond1} implies that
	\begin{equation}\label{4sizePhi_0}
		\Phi_0^{-1}(\diam{E}) \lesssim |E|^{1/2}.
	\end{equation}
	We can choose $ p_0 >2 $ and $ \epsilon_0>0 $ such that $ \beta_{\Phi_0}^\infty < p_0 < p_0+\epsilon_0  < \alpha_{\Phi}^\infty $. By \eqref{2eqM3}, we have
	\begin{equation}\label{4sizecomp}
		\Phi^{-1} (u) \lesssim u^{1/(p_0+\epsilon_0)} \leq u^{1/p_0} \lesssim \Phi_0^{-1}(u)
	\end{equation}
	for sufficiently large $ u $. From \eqref{4sizePhi_0} and \eqref{4sizecomp}, we obtain
	\[ C_1 \gtrsim  \frac{\Phi_0^{-1}(\diam{E})}{\Phi^{-1}(\diam{E})} \gtrsim (\diam{E})^{1/p_0 - 1/(p_0+\epsilon_0)} \gtrsim |E|
	^{1/p_0 - 1/(p_0+\epsilon_0)} \gtrsim \log |E| \]
	when $|E|$ is sufficiently large.
	Therefore,  $ B \approx C_1 $ in Lemma \ref{TalProb}. 
	From \eqref{4sizePhi_0} and \eqref{4sizecomp} again, we obtain 
	\begin{equation}\label{4sizecomp2}
		\Phi^{-1}(\diam{E})\lesssim \diam{E}^{1/(p_0+\epsilon_0)}\lesssim (\Phi_0^{-1}(\diam{E}))^{p_0/(p_0+\epsilon_0)} \lesssim |E|^{{p_0}/{2(p_0+\epsilon_0)}}.
	\end{equation}
	We can choose $ \epsilon_1 $ such that 
	\begin{equation}\label{4epsilon1}
		\frac{1}{2} - \frac{p_0}{2(p_0+
			\epsilon_0)} -\epsilon_1 > 0.
	\end{equation}
	Let the left hand side of \eqref{4epsilon1} be $ \epsilon $. By \eqref{4sizecomp2}, we have
	\[ \frac{1}{B\epsilon |E|^\epsilon} \gtrsim \frac{(\Phi^{-1}(\diam{E}))^2}{|E|} |E|^{\epsilon_1} \geq \delta. \]
	It follows from Lemma \ref{TalProb} that
	\[ \bE\norm{U_J}_{C_1} \lesssim 1. \]
	Similarly, by \eqref{4sizePhi_0} and \eqref{4sizecomp}, we have
	\[ \begin{split}
		C_2 &\gtrsim \frac{\log |E|}{10} \bigg( \frac{\Phi_0^{-1}(\diam{E})}{\Phi^{-1}(\diam{E})} \bigg)^2 \gtrsim \frac{\log |E|}{10} (\diam{E})^{2 (1/p_0 -1/(p_0 +\epsilon_0) )}\\
		&\gtrsim \frac{\log |E|}{10}|E|^{2 (1/p_0 - 1/(p_0+\epsilon_0))}  \gtrsim \log |E|.
	\end{split}\]
	Thus, $ B\approx C_2 $ in Lemma \ref{TalProb}. If $ \epsilon = 1/\log |E| $, then
	\[ \frac{1}{B\epsilon |E|^\epsilon} = \frac{10 (\Phi^{-1} (\diam{E}))^2}{e |E|} \geq \delta. \]
	Lemma \ref{TalProb} implies that
	\[ \bE\norm{U_J}_{C_2} \lesssim \sqrt{\log |E|}. \]
\end{proof}
\begin{lem}\label{4estfPhi}
    Let $ p_1 > \beta_{\Phi}^\infty $ and $\Phi \in \Delta_2$, $ X = L^{p_1}(\bT) $ and assume that \eqref{M2cond1} holds and $C_1$ and $C_2$ are constants in Proposition \ref{4propC_1C_2}. If $f$ is a measurable function such that $ \norm{f}_{C_1} \leq 1 $ and $ \norm{f}_{C_2} \leq \sqrt{\log |E|}  $, then $ \norm{f}_{\Phi} \lesssim 1 $.
\end{lem}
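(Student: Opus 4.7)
The plan is to apply the Orlicz duality \eqref{2duality}, reducing $\norm{f}_\Phi \lesssim 1$ to showing $\int fg\,dx \lesssim 1$ for every test function $g$ with $\int \Psi(|g|)\,dx \leq 1$, where $\Psi$ is complementary to $\Phi$. The hypothesis $p_1 > \beta_{\Phi}^\infty$ translates, via the complementary duality between Matuszewska-Orlicz indices, into $\Psi(u) \gtrsim u^{p_1'}$ for large $u$ (with $p_1' = p_1/(p_1-1)$), which together with $\int \Psi(|g|)\leq 1$ and the finiteness of $\mu(\bT)$ yields $\norm{g}_{L^{p_1'}(\bT)} \lesssim 1$.

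I would then split $g = g_1 + g_2$ via the pointwise truncation $g_1 = g \cdot \mathbf{1}_{\{|g|\leq M\}}$ at a level $M \approx C_1^{1/(2-p_1')}$. For $g_1$, since $p_1' < 2$, one has $\norm{g_1}_{L^2}^2 \leq M^{2-p_1'}\norm{g_1}_{L^{p_1'}}^{p_1'} \lesssim M^{2-p_1'} \lesssim C_1$, hence $\bigl(\sum_{n \in E}|\widehat{g_1}(n)|^2\bigr)^{1/2}\leq \norm{g_1}_{L^2} \lesssim \sqrt{C_1}$. Combined with $\norm{g_1}_{L^{p_1'}}\lesssim 1$, a constant multiple of $g_1$ lies in $X^*_{1,C_1}$, so $\norm{f}_{C_1}\leq 1$ yields $\int fg_1 \lesssim 1$. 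For $g_2$, Chebyshev gives $\mu(\{|g|>M\}) \leq 1/\Psi(M)$; using the Matuszewska-Orlicz monotonicity $\sup_{u>M}u^{p_1'}/\Psi(u) \lesssim M^{p_1'}/\Psi(M)$ (a consequence of $\alpha_\Psi^\infty > p_1'$) I obtain $\norm{g_2}_{L^{p_1'}}\lesssim M/\Psi(M)^{1/p_1'}$, and by Hausdorff-Young together with H\"older on $E$, $\bigl(\sum_{n \in E}|\widehat{g_2}(n)|^2\bigr)^{1/2} \lesssim |E|^{1/2-1/p_1}\norm{g_2}_{L^{p_1'}}$. Using the identity $C_2 \approx C_1^2 \log|E|$ together with the polynomial lower bound $C_1 \gtrsim |E|^{\eta}$ for some $\eta > 0$ established in the proof of Proposition \ref{4propC_1C_2}, one then checks that $\norm{g_2}_{L^{p_1'}}\lesssim 1/\sqrt{\log|E|}$ and $\bigl(\sum_{n \in E}|\widehat{g_2}(n)|^2\bigr)^{1/2} \lesssim \sqrt{C_2/\log|E|}$, so that $\sqrt{\log|E|}\, g_2$ lies (up to an absolute constant) in $X^*_{1,C_2}$. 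Then $\norm{f}_{C_2}\leq \sqrt{\log|E|}$ yields $\int fg_2 \lesssim 1$.

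The main obstacle is calibrating $M$ so that a single truncation level simultaneously places $g_1$ in the $C_1$-test class and forces both of $g_2$'s relevant norms small enough to absorb the factor $\sqrt{\log|E|}$ coming from the second hypothesis. This forces genuine use of the strict gap $\alpha_\Psi^\infty > p_1'$ (equivalent to $p_1 > \beta_{\Phi}^\infty$), which equips $u^{p_1'}/\Psi(u)$ with a polynomial rate of decay at infinity; combined with the polynomial (not merely logarithmic) growth of $C_1$ in $|E|$, this absorbs the $\log|E|$ factors. This is also precisely the place where we avoid invoking the Gin\'{e}-Zinn theorem, as remarked after Theorem \ref{M3}.
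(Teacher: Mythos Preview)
Your dual approach is natural, but the Hausdorff--Young step for $g_2$ is too weak and the argument does not close. Take the model case $\Phi(u)=u^{3}$, $\Phi_0(u)=u^{2}$, $E=\{1,\dots,N\}$, $p_1=4$ (so $p_1'=4/3$, $\alpha_\Psi^\infty=3/2$). Then $C_1=N^{1/6}$, $M=C_1^{3/2}=N^{1/4}$, and your bound gives
\[
\Bigl(\sum_{n\in E}|\widehat{g_2}(n)|^2\Bigr)^{1/2}
\lesssim |E|^{1/2-1/p_1}\,\frac{M}{\Psi(M)^{1/p_1'}}
= N^{1/4}\cdot N^{-1/32}=N^{7/32},
\]
whereas membership of $\sqrt{\log|E|}\,g_2$ in $X^\ast_{1,C_2}$ requires this quantity to be $\lesssim C_1=N^{1/6}=N^{16/96}<N^{21/96}$. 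No choice of $p_1>\beta_\Phi^\infty$ or of the truncation level $M$ repairs this: raising $M$ is blocked by the constraint $M^{2-p_1'}\lesssim C_1$ needed for $g_1$, and taking $p_1$ close to $\beta_\Phi^\infty$ only shrinks the gap to first order without closing it. The underlying reason is that your argument nowhere uses the hypothesis \eqref{M2cond1}; the polynomial growth $C_1\gtrsim|E|^{\eta}$ inherited from Proposition~\ref{4propC_1C_2} is not a substitute.

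The paper proceeds differently. Rather than testing against $g$, it splits $f$ itself by the level sets $\{|f|\le 1\}$, $\{1\le |f|\le D\}$, $\{|f|\ge D\}$ with $D^{p_1-2}\approx C_1$, and uses that $\norm{\cdot}_{C}$ is the polar of the convex set $X^\ast_{1,C}$ to invoke Hahn--Banach: from $\norm{f}_{C_1}\le1$ one writes $f=v_1+v_2$ with $\norm{v_1}_{p_1}\le1$ and $v_2=\sum_{n\in E}a_ne^{2\pi inx}$, $\sum|a_n|^2\le C_1^{-1}$; likewise $\norm{f}_{C_2}\le\sqrt{\log|E|}$ gives $f=w_1+w_2$. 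The middle range is controlled by the distribution estimates for $v_1,v_2$. For $\{|f|\ge D\}$ the crucial point is that $w_2$ is a genuine trigonometric polynomial on $E$, so \eqref{M2cond1} yields $\norm{w_2}_{\Phi_0}\lesssim \Phi^{-1}(\diam E)/\Phi_0^{-1}(\diam E)$, which together with $\norm{w_2}_\infty\lesssim\Phi^{-1}(\diam E)$ gives the needed tail bound. This direct use of $K_{\Phi_0}(E)$ is what your approach is missing.
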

\begin{proof}
	We can choose $p$ such that $ \beta_{\Phi}^\infty < p  < p_1$. We have
	\begin{equation}\label{4normfphi1}
	\norm{f}_{\Phi} \leq \norm{f \textbf{1}_{\{|f| \leq 1 \} }}_{\Phi} + \norm{f \textbf{1}_{ \{1 \leq |f| \leq D \} }}_{\Phi}  +\norm{f \textbf{1}_{\{|f| \geq D \} }}_{\Phi}
\end{equation}
where $ D^{p_1-2} \approx C_1 $. Such $D$ exists since $C_1 \gtrsim 1$ and $p_1 >2$. It is easy to show that $ \norm{f \textbf{1}_{\{|f| \leq 1 \} }}_{\Phi}  \lesssim 1 $. When $k \geq 1$, $\Phi(u/k) \leq \Phi(u)/k$ since $\Phi$ is convex and $\Phi(0)=0$. Thus, we obtain by Fubini's theorem that
\[ \begin{split}
	\norm{f \textbf{1}_{ \{1 \leq |f| \leq D \}}}_{\Phi} &\leq \inf\{ k \geq 1: \int \Phi(|f \textbf{1}_{ \{1 \leq |f| \leq D \}}/k|) dx \leq 1 \}\\
	&\leq \inf\{ k \geq 1: \int \frac{1}{k}\Phi(|f \textbf{1}_{ \{1 \leq |f| \leq D \}}|) dx \leq 1 \}\\
	& = \inf\{ k \geq 1 : \int_1^D \frac{1}{k}\Phi'(u)\mu \{ |f| \geq u  \}du \leq 1  \}\\
	& = \max (\int_1^D \Phi'(u)\mu \{ |f| \geq u  \}du, 1 )
\end{split} \]
where $ \mu  $ is the Lebesgue measure. By Lemma \ref{proppq} (2) and \eqref{2eqM3}, when $u \geq 1$,
\begin{equation*}
    \Phi'(u) \lesssim \frac{\Phi(u)}{u} \lesssim u^{p-1}
\end{equation*}
Therefore,
\begin{equation}\label{4estf2}
		\int_1^D \Phi'(u) \mu\{ |f| \geq u  \} du  \lesssim \int_1^D u^{p-1} \mu\{ |f| \geq u  \}du.
\end{equation}
As mentioned in \cite[557p]{Tal14} and \cite[Lemma 19.3.11]{Tal21}, Hahn-Banach theorem and the fact that $ \norm{f}_{C_1} \leq 1 $ implies that there exist $ v_1  $ and $ v_2 $ such that $ f = v_1 + v_2 $ and
\begin{equation}\label{4decomp}
   \norm{v_1}_{p_1} \leq 1, \qquad v_2 = \sum_{n \in E} a_n e^{2\pi i n x} \qquad \mathrm{where} \ \sum_{n \in E} |a_n|^2 \leq C_1^{-1}. 
\end{equation}
By the fact that $ \norm{v_1}_{p_1} \leq 1 $, we have 
\begin{equation}\label{4meas_u_1}
	\mu\{ |v_1 | >u \} \leq u^{-p_1}.
\end{equation}
Similarly, $ \norm{v_2}_2 \leq C_1^{-1/2} $ implies that
\begin{equation}\label{4meas_u_2}
	\mu \{ |v_2 | >u \} \leq C_1^{-1} u^{-2}.
\end{equation}
If $ u \leq D $, then
\begin{equation}\label{4meas_f}
	\mu\{ |f| \geq u \} \leq \mu \{  |v_1| >\frac{u}{2} \}  + \mu\{ |v_2 | > \frac{u}{2} \} \lesssim u^{-p_1} + C_1^{-1} u^{-2} \lesssim u^{-p_1}.
\end{equation}
By \eqref{4estf2}, \eqref{4meas_f} and the fact that $ p < p_1 $, we obtain
\[ \norm{f\textbf{1}_{\{1 \leq |f| \leq D \}}}_{\Phi} \lesssim \max ( \int_1^D  u^{p-p_1-1} du ,1 ) \lesssim 1. \]
Similarly, $ \norm{f}_{C_2} \leq \sqrt{\log |E|} $ implies that there exist $ w_1 $ and $ w_2 $ such that $ f = w_1+w_2 $ and
\begin{equation*}
   \norm{w_1}_{p_1} \leq \sqrt{\log |E|}, \qquad w_2 = \sum_{n \in E} a_n e^{2\pi i nx} 
\end{equation*}
where
\begin{equation*}
    \sum_{n\in E} |a_n|^2 \leq \frac{\log |E|}{C_2}\approx \frac{(\Phi^{-1}(\diam{E}))^2}{|E|}.
\end{equation*}
Thus, we have
\begin{equation}\label{4measv_1}
	\mu\{ |w_1| >u \} \leq (\log |E|)^{p_1/2}u^{-p_1}.
\end{equation}
By \eqref{M2cond1} and the definition of $ w_2 $, we have
\begin{equation}\label{4estv_2}
		\norm{w_2}_{\Phi_0} \lesssim \frac{|E|^{1/2}}{\Phi_0^{-1}(\diam{E})} \norm{w_2}_2 \lesssim \frac{\Phi^{-1}({\diam{E}})}{\Phi_0^{-1}(\diam{E}) }.
\end{equation}
Let 
\[ k_0= C \frac{\Phi^{-1}({\diam{E}})}{\Phi_0^{-1}(\diam{E}) } \]
where $ C $ is a sufficiently large constant. It follows from \eqref{4estv_2} that
\begin{equation}\label{4measv_2}
	\mu\{ |w_2| >u \} \leq \frac{1}{\Phi_0(u/k_0)}.
\end{equation}
Using \eqref{4measv_1} and \eqref{4measv_2}, we have
\begin{equation}\label{4meas_f2}
	\begin{split}
		\mu\{ |f| >u \} &\leq \mu\{ |w_1 | > u/2 \} + \mu \{ |w_2| > u/2 \}\\
		&\lesssim u^{-p_1} (\log |E|)^{p_1/2}  + \frac{1}{\Phi_0(u/k_0)}.
	\end{split}	
\end{equation}
We also have
\begin{equation}\label{4maxv_2}
	\norm{w_2}_\infty \leq \bigg( \sum_{n \in E} |a_n|^2 \bigg)^{1/2}  \bigg( \sum_{n \in E} |e^{2\pi i nx}|^2 \bigg)^{1/2} \lesssim  \Phi^{-1}(\diam{E}).
\end{equation} 
From \eqref{4meas_f2} and \eqref{4maxv_2}, it follows that
\begin{equation}\label{4estf3}
	\begin{split}
		\norm{f\textbf{1}_{\{|f| \geq D \} } }_{\Phi} &=\max ( \int_D^\infty \Phi'(u) \mu\{ |f| \geq u  \} du , 1  ) \\
		&\lesssim  \max ( \int_D^\infty \Phi'(u) u^{-p_1} (\log |E|)^{p_1/2} du + \int_D^{D_1} 
		\frac{\Phi'(u)}{\Phi_0(u/k_0)} du , 1 )
	\end{split}
\end{equation}
where $ D_1 \approx \Phi^{-1}(\diam{E})$. As in \eqref{4estf2},
\begin{equation}\label{41stint}
	\begin{split}
		(\log |E|)^{p_1/2} \int_D^\infty \Phi' (u) u^{-p_1} du &\lesssim (\log |E|)^{p_1/2} \int_D^\infty u^{p-p_1-1}du\\
		&\lesssim (\log |E|)^{p_1/2} D^{p-p_1} \lesssim 1 .
	\end{split}
\end{equation}
For the second integral, we have
\begin{equation}\label{4intDD_1}
	\begin{split}
		\int_D^{D_1} 
		\frac{\Phi'(u)}{\Phi_0(u/k_0)} du &\lesssim \int_D^{D_1}  \frac{\Phi(u)}{u\Phi_0(u/k_0)}du \\
		&\lesssim \int_{D/D_1}^1 \frac{\Phi(\Phi^{-1}(\diam{E})s)}{s\Phi_0(C^{-1}\Phi_0^{-1}(\diam{E})s)}ds \qquad \mathrm{where} \ u = D_1s.
	\end{split}
\end{equation}
In the second inequality, we used the fact that $\Phi \in \Delta_2 $. We can choose $\epsilon_0 > 0$ such that $\beta_{\Phi_0}^\infty < \beta_{\Phi_0}^\infty +\epsilon_0 < \alpha_{\Phi}^\infty$. Since $s\leq 1$ and $s \Phi^{-1}(\diam{E}) \geq D $, we obtain from \eqref{2eqM2_1} that 
\begin{equation}\label{4esta_Phi}
    \Phi(s \Phi^{-1}(\diam{E}))\lesssim_{\epsilon_0} s^{\beta_{\Phi_0}^\infty + \epsilon_0} \diam{E}
\end{equation}
Now, we can choose $\epsilon_1$ such that 
\begin{equation}\label{4epsilon_1}
	\frac{1}{\beta_{\Phi_0}^\infty} - \frac{1}{\alpha_\Phi^\infty } > 2\epsilon_1 \qquad \mathrm{and} \qquad \epsilon_0 > \frac{\epsilon_1 (\beta_{\Phi_0}^\infty)^2}{1-\epsilon_1 \beta_{\Phi_0}^\infty}.
\end{equation}
By \eqref{2invab}, we obtain that 
\begin{equation*}
	s \Phi_0^{-1}(\diam{E}) \geq \bigg(\frac{D}{D_1}\bigg) \Phi_0^{-1}(\diam{E}) \gtrsim \bigg( \frac{\Phi_0^{-1}(\diam{E}) }{\Phi^{-1}(\diam{E})}\bigg)^{{(p_1-1)}/{(p_1-2)}}
\end{equation*}
and
\begin{equation*}
	 \frac{\Phi_0^{-1}(\diam{E} )}{\Phi^{-1}(\diam{E})} \gtrsim_{\epsilon_1} \diam{E}^{(\beta_{\Phi_0}^\infty)^{-1} - (\alpha_\Phi^\infty)^{-1} -2\epsilon_1}.
\end{equation*}
Since the exponent of $\diam{E}$ is positive, $\Phi^{-1}(\diam{E})s$ is sufficiently large. Therefore, $ C^{-1} \Phi_0(s\Phi_0^{-1} (\diam{E}) )\lesssim \Phi_0(C^{-1}\Phi_0^{-1}(\diam{E})s)$ by Lemma \ref{2delta_2(1+delta)}.
Now, we will show that 
\begin{equation}\label{4estb_Phi_0_1}
	 \Phi_0^{-1}(s^{\beta_{\Phi_0}^\infty/ (1-\epsilon_1 \beta_{\Phi_0}^\infty) } \diam{E}) \lesssim_{\epsilon_1} s \Phi_0^{-1}( \diam{E}).
\end{equation}
We obtain from \eqref{2invab} and \eqref{4epsilon_1} that
\begin{equation*}
	\begin{split}
		s( \diam{E})^{(\beta_{\Phi_0}^\infty)^{-1} - \epsilon_1} &\gtrsim \bigg(\frac{D}{D_1}\bigg) \diam{E}^{(\beta_{\Phi_0}^\infty)^{-1} - \epsilon_1}\\
		&\gtrsim_{\epsilon_1} \diam{E}^{\frac{p_1-1}{p_1-2}( (\beta_{\Phi_0}^\infty)^{-1} - (\alpha_\Phi^\infty)^{-1} -2\epsilon_1)} \gtrsim 1.
	\end{split}
\end{equation*}
Therefore, $s^{\beta_{\Phi_0}^\infty/ (1-\epsilon_1 \beta_{\Phi_0}^\infty) } \diam{E} \gtrsim 1$. Since $s<1$, it follows from \eqref{2eqM2_1} that 
\begin{equation*}
	\Phi_0^{-1}(s^{\beta_{\Phi_0}^\infty/ (1-\epsilon_1 \beta_{\Phi_0}^\infty) } \diam{E}) \lesssim_{\epsilon_1} s^ {\beta_{\Phi_0}^\infty/ (1-\epsilon_1 \beta_{\Phi_0}^\infty) (\alpha_{{\Phi_0}^{-1}}^\infty -\epsilon_1 )}\Phi_0^{-1}(\diam{E})= s \Phi_0^{-1}(\diam{E}).
\end{equation*}
Thus, we established \eqref{4estb_Phi_0_1}. Inequality \eqref{4intDD_1} combined with \eqref{4esta_Phi} and \eqref{4estb_Phi_0_1} gives
\begin{equation}\label{42ndint}
	\int_D^{D_1} 
	\frac{\Phi'(u/k)}{\Phi_0(u/k_0)} du \lesssim_{\epsilon_0,\epsilon_1} \int_0^1 s^{\epsilon_0-1 -\epsilon_1 (\beta_{\Phi_0}^\infty)^2 / (1-\epsilon_1 \beta_{\Phi_0}^\infty ) }ds \lesssim 1.
\end{equation}
The integral is bounded by the choice of $\epsilon_1$.
Combining \eqref{4estf3} with \eqref{41stint} and \eqref{42ndint} gives 
\[ \norm{f\textbf{1}_{\{|f| \geq D \} }}_{\Phi}  \lesssim 1.  \]
\end{proof}
Now, we can prove Theorem \ref{M2}
\begin{proof}
	Let us assume that $ \sum_{n \in J} |a_n|^2 \leq 1 $. When $X = L^{p_1}(\bT)$ such that $p_1 > \beta_{\Phi}^\infty$, we have
	\[ \begin{split}
		\norm{\sum_{n \in J} a_n e^{2\pi i nx}}_{C_1} &= \sup_{g \in X_{1,C_1}^\ast} \int \sum_{n \in J} a_n e^{2\pi i n x} g(x) dx = \sup_{g \in X_{1,C}^\ast} \sum_{n \in J} a_n \widehat{g}(-n)\\
		&\leq \sup_{g \in X_{1,C_1}^\ast} \bigg( \sum_{n \in J} |a_n|^2 \bigg)^{1/2} \bigg( \sum_{n \in J} |\widehat{g}(-n) |^2\bigg)^{1/2} \leq \norm{U_J}_{C_1}.
	\end{split} \]
Similarly, from $ \sum_{n \in J} |a_n|^2 \leq 1 $, we have
\[ \norm{\sum_{n \in J} a_n e^{2\pi i n x}}_{C_2} \leq \norm{U_J}_{C_2}. \]
Let 
\[ h(x) := \frac{\sum_{n \in J}a_n e^{2\pi i n x}}{\norm{U_J}_{C_1} +( {\norm{U_J}_{C_2}}/{\sqrt{\log |E|}} )} \]
Since $ \norm{h}_{C_1} \leq 1 $ and $ \norm{h}_{C_2} \leq \sqrt{\log |E|} $, Lemma \ref{4estfPhi} implies that $ \norm{h}_{\Phi} \lesssim 1 $. Therefore,
\[ K_{\Phi}(J)= \sup_{\norm{a_n}_{\ell^2}\leq 1}\norm{\sum_{n \in J} a_n e^{2\pi i n x}}_{\Phi} \lesssim  \norm{U_J}_{C_1} + \frac{\norm{U_J}_{C_2}}{\sqrt{\log |E|}}. \]
Applying \eqref{4expU_J} gives $ \bE(K_{\Phi}(J)) \lesssim 1. $
\end{proof}
\section{Existence of \texorpdfstring{$ \Lambda(\Phi) $}{Lambda(Phi)}-sets}\label{sec:5}
Before we prove Littlewood-Paley estimate on Orlicz space, we have to start with classical results. Although Littlewood-Paley estimate of Fourier series in $ L^p(\bT) $ is well known (see e.g., \cite[Theorem 14.2.1]{Zyg02}), we need the following proof of the estimate which uses transference of multipliers.\\
\indent Let $\xi_0 \in \bR$. A bounded function $b$ on $\bR$ is called \textit{regulated at the point $\xi_0$} if
    \begin{equation*}\label{5defreg}
        \lim_{\epsilon \rightarrow 0} \frac{1}{\epsilon} \int_{|\xi| \leq \epsilon} (b(\xi_0-\xi) - b(\xi_0))d\xi =0.
    \end{equation*}
\begin{lem}\cite[Theorem 4.3.7]{LG14}\label{5trans}
    Suppose that $b$ is a regulated function at every point in $\bZ$. Let $T_1$ be a linear operator defined on $L^p(\bR)$ for some $1<p<\infty$ such that
    \begin{equation*}
        \widehat{T_1f}(\xi) = b(\xi)\widehat{f}(\xi)
    \end{equation*}
    and $\norm{T_1}_{L^p(\bR) \rightarrow L^p(\bR)}$ is finite. Let us consider another linear operator $T_2 $ associated with $b$ defined on $L^p(\bT)$ such that
    \begin{equation*}
        T_2f(x) = \sum_{n \in \bZ} b(n) a_n e^{2\pi i n x} 
    \end{equation*}
    where $f(x) = \sum_n a_n e^{2\pi in x}$. Then, we have
    \begin{equation*}
        \norm{T_2}_{L^p(\bT) \rightarrow L^p(\bT)} \lesssim \norm{T_1}_{L^p(\bR) \rightarrow L^p(\bR)}. 
    \end{equation*}
\end{lem}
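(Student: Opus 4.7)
The plan is to prove Lemma \ref{5trans} by the classical de Leeuw transference argument, which builds a bridge between Fourier multipliers on $\bR$ and their periodic analogues via a concentrated smooth cutoff. Fix a Schwartz function $g$ on $\bR$ with $g(0) \neq 0$ (for instance a Gaussian), and for a trigonometric polynomial $f(x) = \sum_n a_n e^{2\pi i n x}$ (finite sum) set $F_\epsilon(x) \coloneqq g(\epsilon x) f(x)$ for $\epsilon > 0$. Then $F_\epsilon$ is Schwartz and a direct computation gives $\widehat{F_\epsilon}(\xi) = \epsilon^{-1} \sum_n a_n \widehat{g}((\xi - n)/\epsilon)$. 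Applying $T_1$, inverting the Fourier transform, and substituting $\eta = (\xi - n)/\epsilon$ yields
\begin{equation*}
    T_1 F_\epsilon(x) = \sum_n a_n e^{2\pi i n x} \int_\bR b(n + \epsilon \eta)\, \widehat{g}(\eta)\, e^{2\pi i \epsilon \eta x}\, d\eta.
\end{equation*}

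Next, I would show that $T_1 F_\epsilon$ is well approximated by $g(\epsilon x) T_2 f(x)$ in the appropriately normalized $L^p(\bR)$ sense. The key input is that, for each fixed $n \in \bZ$, the inner integral is a convolution of $b$ against a kernel concentrating at $n$ at scale $\epsilon$; the regulated hypothesis at $n$ is precisely the condition that such averaged values of $b$ converge to $b(n)$. Combined with $\widehat g \in L^1(\bR)$ and the boundedness of $b$, this gives $\int_\bR b(n+\epsilon\eta)\, \widehat g(\eta)\, e^{2\pi i \epsilon \eta x}\, d\eta \to b(n) g(\epsilon x)$ in a sense uniform enough to conclude $\norm{T_1 F_\epsilon - g(\epsilon\, \cdot\,) T_2 f}_{L^p(\bR)} = o(\epsilon^{-1/p})$ as $\epsilon \to 0$. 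Separately, a standard Riemann-sum computation shows that for any $1$-periodic function $h$,
\begin{equation*}
    \lim_{\epsilon \to 0} \epsilon^{1/p} \norm{g(\epsilon\, \cdot\,) h}_{L^p(\bR)} = \norm{g}_{L^p(\bR)} \norm{h}_{L^p(\bT)},
\end{equation*}
since, after the change of variables $y = \epsilon x$, the integral $\int_\bR |g(y)|^p |h(y/\epsilon)|^p\, dy$ is a Riemann-type average approximating $\norm{g}_{L^p(\bR)}^p \norm{h}_{L^p(\bT)}^p$ as $\epsilon \to 0$.

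Combining these ingredients with the operator bound $\norm{T_1 F_\epsilon}_{L^p(\bR)} \leq \norm{T_1}_{L^p(\bR) \rightarrow L^p(\bR)} \norm{F_\epsilon}_{L^p(\bR)}$, applying the two Riemann-sum limits to $h=f$ (right-hand side) and $h=T_2 f$ (left-hand side), dividing through by $\epsilon^{-1/p} \norm{g}_{L^p(\bR)}$, and letting $\epsilon \to 0$, one obtains $\norm{T_2 f}_{L^p(\bT)} \leq \norm{T_1}_{L^p(\bR) \rightarrow L^p(\bR)} \norm{f}_{L^p(\bT)}$ for every trigonometric polynomial $f$; density of trigonometric polynomials in $L^p(\bT)$ for $1 < p < \infty$ then extends the bound to all of $L^p(\bT)$ and yields the desired $\lesssim$ inequality on operator norms. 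The main obstacle is making the pointwise limit of the inner integral quantitative under the regulated (rather than continuous) hypothesis on $b$: one splits the $\eta$-integration into a neighborhood of $0$, where the regulated condition at $n$ controls the average of $b$ against $\widehat g$ via an $L^1$-type estimate, and a complementary tail, where rapid decay of $\widehat g$ and boundedness of $b$ give a uniform remainder. The resulting error inherits the ``scale $1/\epsilon$'' concentration structure of $F_\epsilon$, so the Riemann-sum normalization absorbs it.
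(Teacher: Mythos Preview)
The paper does not supply its own proof of this lemma: it is quoted verbatim as \cite[Theorem 4.3.7]{LG14} and used as a black box, so there is no in-paper argument to compare against. Your proposal is the standard de Leeuw transference proof, which is exactly the argument given in the cited reference; the outline (localize a trigonometric polynomial by $g(\epsilon\cdot)$, compute the Fourier transform, use the regulated hypothesis to pass to the limit, and read off the periodic bound from the Riemann-sum identity $\lim_{\epsilon\to 0}\epsilon^{1/p}\norm{g(\epsilon\cdot)h}_{L^p(\bR)}=\norm{g}_{L^p(\bR)}\norm{h}_{L^p(\bT)}$) is correct and complete at the level of a sketch.

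One technical remark: the regulated condition as stated controls averages of $b$ over \emph{intervals} shrinking to $n$, whereas your inner integral tests $b(n+\epsilon\eta)-b(n)$ against the fixed $L^1$ weight $\widehat g$. To bridge this you should either (i) choose $g$ so that $\widehat g$ is a smooth bump and decompose it as a superposition of indicator functions of intervals (Fubini plus the regulated condition on each level set), or (ii) follow Grafakos more closely and first replace $b$ by the smoothed multiplier $b_\epsilon = b * \check\phi_\epsilon$ for a compactly supported $\phi$, noting that $b_\epsilon(n)\to b(n)$ by the regulated hypothesis and that $\norm{T_{b_\epsilon}}_{L^p(\bR)\to L^p(\bR)}\le \norm{T_b}_{L^p(\bR)\to L^p(\bR)}$. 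Either route closes the gap you flagged at the end of your sketch.
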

Let $\psi(x)$ be a compactly supported smooth function and let 
\begin{equation*}
    \psi_0(\xi)= \psi(\xi), \qquad \psi_j(\xi) = \psi\bigg(\frac{\xi}{2^{j+1}}\bigg) - \psi\bigg(\frac{\xi}{2^j} \bigg) \qquad \mathrm{for} \ j \in \bN.
\end{equation*}
Let $r_j(t)$ be the Rademacher function where $t \in [0,1]$. We can define linear operators
\begin{equation*}
    \widehat{P_jf}(\xi) = \psi_j(\xi )\widehat{f}(\xi)
\end{equation*}
and
\begin{equation*}
    {T_1f}(x) = \sum_{j=0}^\infty P_jf(x) r_j(t) 
\end{equation*}
where $f(x) $ is a complex-valued measurable function on $\bR$. We also define corresponding linear operators such that 
\begin{equation*}
    {\tilde{P}}_{j}f(x) = \sum_j \psi_j(n) a_n e^{2\pi in x}
\end{equation*}
and
\begin{equation*}
    T_2f(x) = \sum_{j=0}^\infty \tilde{P}_j f(x) r_j(t)
\end{equation*}
where $f (x) = \sum_n a_n e^{2\pi i n x}$.
\begin{lem}
	For any $ 1 < p < \infty $, 
	\begin{equation}\label{5classic} 
		\norm{\sum_{j=0}^\infty \tilde{P}_jf(x) r_j(t) }_{p} \leq C(p) \norm{f}_{p}  
	\end{equation}
	with $C(p)$ independent of $t$.
\end{lem}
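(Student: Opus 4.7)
The plan is to reduce the torus estimate to the corresponding estimate on $\bR$ via the transference lemma (Lemma 5.1). Setting $b(\xi) = \sum_{j=0}^\infty r_j(t) \psi_j(\xi)$, the operator $T_2$ on $L^p(\bT)$ is precisely the multiplier operator associated to $b$ through its values at integers, while $T_1$ on $L^p(\bR)$ is the Fourier multiplier with symbol $b$. Hence once I have $\norm{T_1 f}_{L^p(\bR)} \leq C(p) \norm{f}_{L^p(\bR)}$ with $C(p)$ independent of $t$, Lemma 5.1 will immediately yield \eqref{5classic}.

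For the real-line estimate, I would invoke the H\"ormander--Mikhlin multiplier theorem. Since $\psi$ is a compactly supported smooth bump, the supports of the $\psi_j$ are essentially dyadic shells $|\xi| \approx 2^j$ with bounded overlap, and the scaling gives the uniform symbol bound
\[ |\xi|^k \bigl| \partial_\xi^k \psi_j(\xi) \bigr| \lesssim 1 \]
for every $k \geq 0$, with constants depending only on $\psi$. Since $r_j(t) \in \{-1,+1\}$ and any fixed $\xi$ lies in the support of only $O(1)$ of the $\psi_j$, these bounds are inherited by $b$ uniformly in $t$. Consequently $b$ is a H\"ormander--Mikhlin multiplier whose norm depends only on $\psi$ and $p$, and the multiplier theorem yields $\norm{T_1}_{L^p(\bR) \to L^p(\bR)} \leq C(p)$ with $C(p)$ independent of $t$ for every $1 < p < \infty$.

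To apply Lemma 5.1 I must also verify that $b$ is regulated at each integer. This is immediate because in a neighbourhood of any fixed point only finitely many $\psi_j$ are nonzero, so $b$ agrees locally with a finite sum of smooth functions; in particular $b$ is continuous there, and continuity at a point implies the regulated condition directly from the definition recalled above. With this verified, Lemma 5.1 transfers the bound from $\bR$ to $\bT$ with the same dependence on $p$, giving \eqref{5classic}. The only step requiring genuine care is the uniformity in $t$ of the H\"ormander--Mikhlin constants, but this is automatic because the Rademacher values only flip signs in front of well-controlled pieces of a fixed Littlewood--Paley partition of unity, and the derivative bounds on $b$ are insensitive to such sign changes. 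I therefore do not anticipate a substantive obstacle; the lemma is a standard Littlewood--Paley inequality on $\bT$ repackaged through the transference step, which is exactly what makes Lemma 5.1 the natural tool.
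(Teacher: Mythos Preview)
Your proposal is correct and follows essentially the same route as the paper: bound $T_1$ on $L^p(\bR)$ uniformly in $t$ (the paper cites Stein, you spell out the H\"ormander--Mikhlin verification), check that $b=\sum_j r_j(t)\psi_j$ is regulated, and then invoke Lemma~5.1 to transfer to $\bT$. The only cosmetic difference is in the regulated step---the paper uses the global derivative bound $|\psi_j'|\lesssim 2^{-j}$ to get $\sum_j|\psi_j(\xi_0-\xi)-\psi_j(\xi_0)|\lesssim|\xi|$ directly, while you argue via local finiteness of the supports and continuity; both are valid under the standard assumption that $\psi\equiv 1$ near the origin.
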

\begin{proof}
    It is well known that $\norm{T_1f}_{L^p(\bR)} \lesssim \norm{f}_{L^p(\bR)}$, see e.g., \cite{St70}[Section 4.5, 5.3.2].
Once we show that $\sum_{j =0}^\infty \psi_j(\xi) r_j(t)$ is regulated everywhere on $\bR$, \eqref{5classic} follows from Lemma \ref{5trans}. Let $\xi_0 \in \bR$, then we obtain
\begin{equation*}
    \begin{split}
        \frac{1}{\epsilon} & \bigg|\int_{|\xi| \leq \epsilon} \sum_{j=0}^{\infty} \psi_j(\xi_0 - \xi) r_j(t) -\sum_{j=0}^{\infty} \psi_j(\xi_0)r_j(t)d\xi \bigg|\\
        &\lesssim\frac{1}{\epsilon} \int_{|\xi | \leq \epsilon } \sum_{j=0}^\infty \bigg|\psi_j(\xi_0 -\xi) -\psi_j(\xi_0) \bigg| d\xi \\
        &\lesssim \frac{1}{\epsilon} \int_{|\xi| \leq \epsilon} \sum_{j=0}^\infty \frac{|\xi|}{2^j} d\xi \leq \epsilon.
    \end{split}
\end{equation*}
Since $\xi_0$ is arbitrary, $\sum_{j=0}^\infty \psi_j(\xi) r_j(t)$ is regulated everywhere on $\bR$. 
\end{proof}
Now, let us introduce an interpolation lemma in Orlicz spaces. Let $ \cP $ be a set of functions $ \rho : [0 ,\infty) \rightarrow [0,\infty) $ such that it is continuous and positive on $ (0, \infty) $ and 
\[ \rho(s) \leq \max (1 , s/t) \rho(t) \qquad \mathrm{for \ any \ } s,t >0. \]
By $ \cP^\pm $, we denote a set of $ \rho \in \cP $ such that
\begin{equation}\label{2condpm}
	\lim\limits_{t \rightarrow 0} \overline{\rho}(t) = \lim\limits_{t \rightarrow \infty} \frac{\overline{\rho}(t)}{t}=0 \qquad \mathrm{where} \ \overline{\rho}(t) \coloneqq \sup_{s>0} \frac{\rho(st)}{\rho(s)}.
\end{equation}
Note that \eqref{2condpm} is equivalent to $ 0 < \alpha_\rho^a \leq \beta_\rho^a <1 $.
\begin{lem}\cite[Theorem 14.8]{Mal89}\label{5intplem}
	Let $ \Phi_0, \Phi_1 $ and $ \Psi_0 $, $ \Psi_1 $ be Young functions. Assume that $ T $ is a continuous linear operator such that $ \norm{T}_{L^{\Phi_0}(\bT) \rightarrow L^{\Psi_0}(\bT)}  $ and $ \norm{T}_{L^{\Phi_1}(\bT) \rightarrow L^{\Psi_1}(\bT)}  $ are both finite. If
	\[ \Phi^{-1} = \Phi_0^{-1} \rho (\Phi_1^{-1} / \Phi_0^{-1}) \qquad \mathrm{and} \qquad  \Psi^{-1} = \Psi_0^{-1} \rho(\Psi_1^{-1} / \Psi_0^{-1}) \]
	with $ \rho \in \cP^\pm $, then 
	\begin{equation}\label{2intp}
		\norm{T}_{L^{\Phi}(\bT) \rightarrow L^{\Phi}(\bT)} \lesssim \max \{ \norm{T}_{L^{\Phi_0}(\bT) \rightarrow L^{\Psi_0}(\bT)}, \norm{T}_{L^{\Phi_1}(\bT) \rightarrow L^{\Psi_1}(\bT)} \}.
	\end{equation}
\end{lem}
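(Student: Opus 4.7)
The statement is cited from \cite[Theorem 14.8]{Mal89}, so the reference already supplies a complete proof; the following is my plan for how one would reconstruct the argument from the tools developed in Maligranda's book and in this paper so far.

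The plan is to realize $L^{\Phi}$ and $L^{\Psi}$ as Calder\'{o}n--Lozanovsky interpolation spaces with function parameter $\rho$. Specifically, the identity $\Phi^{-1}=\Phi_0^{-1}\,\rho(\Phi_1^{-1}/\Phi_0^{-1})$ is exactly the condition that $L^{\Phi}=\rho(L^{\Phi_0},L^{\Phi_1})$ in the sense of Calder\'{o}n--Lozanovsky, and similarly for $L^{\Psi}$. The first step I would carry out is to verify that, because $\rho\in\cP^{\pm}$ (equivalently $0<\alpha_\rho^a\le\beta_\rho^a<1$), both $\Phi$ and $\Psi$ are well-defined Young functions with non-trivial indices, so the associated Orlicz spaces are honest intermediate spaces of the couples $(L^{\Phi_0},L^{\Phi_1})$ and $(L^{\Psi_0},L^{\Psi_1})$.

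Next I would pass to a $K$-functional description. For a couple $(A_0,A_1)$ of Banach function spaces, the parameter function $\rho$ defines the interpolation norm via
\[
\|f\|_{\rho(A_0,A_1)}\approx \inf_{f=f_0+f_1}\max\Bigl\{\|f_0\|_{A_0},\;\|f_1\|_{A_1}/\overline{\rho}\bigl(\|f_1\|_{A_1}/\|f_0\|_{A_0}\bigr)\Bigr\},
\]
with $\overline{\rho}$ as in \eqref{2condpm}. The second step is to show that this quasi-norm is equivalent to $\|\cdot\|_{\Phi}$ on the couple $(L^{\Phi_0},L^{\Phi_1})$; this is the content of the Lozanovsky identification of Orlicz spaces with Calder\'{o}n couples, and it uses only the inverse-function identity for $\Phi^{-1}$ together with the monotonicity condition $\rho(s)\le\max(1,s/t)\rho(t)$ that defines $\cP$.

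Once both $L^{\Phi}$ and $L^{\Psi}$ are represented as $K$-spaces with the same parameter $\rho$, the interpolation estimate \eqref{2intp} follows by a standard argument: for $f\in L^{\Phi}$ and any near-optimal decomposition $f=f_0+f_1$, apply $T$ termwise, use the hypotheses $\|Tf_j\|_{\Psi_j}\le N_j\|f_j\|_{\Phi_j}$ with $N_j=\|T\|_{L^{\Phi_j}\to L^{\Psi_j}}$, rescale $f_1\mapsto (N_0/N_1)f_1$ to normalize the ratio, and take the infimum; the homogeneity of $\rho$ under this rescaling is exactly what produces the $\max\{N_0,N_1\}$ on the right-hand side.

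The main obstacle will be the identification step (the $K$-functional description of the Orlicz norm for a function parameter $\rho$ rather than a pure power $t^\theta$): one needs both the Lozanovsky construction and the two endpoint conditions in \eqref{2condpm} to rule out degenerate behavior of $\overline{\rho}$ at $0$ and $\infty$. Once that identification is in place, the rescaling argument producing \eqref{2intp} is formal and the constant hidden in $\lesssim$ depends only on $\rho$.
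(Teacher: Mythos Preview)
The paper does not prove this lemma at all: immediately after the statement it simply records that the result ``was proved by Gustavsson and Peetre in \cite{GP77} and we used notations in \cite{Mal89}.'' Your proposal correctly notes that the citation already supplies a proof, and your sketched reconstruction via the Calder\'{o}n--Lozanovsky identification with function parameter $\rho$ is in line with the Gustavsson--Peetre argument that the paper defers to, so there is nothing to compare or correct.
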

Lemma \ref{5intplem} was proved by Gustavsson and Peetre in \cite{GP77} and we used notations in \cite{Mal89}.
\begin{lem}
	Let $ r_j(t) $ be the Rademacher function where $ t \in \bT $. If $ \Phi $ is a nice Young function such that $ \Phi \in \Delta_2 \cap \nabla_2 $, then
	\begin{equation}\label{5LPintp}
		\norm{\sum_{j \in \bN} \tilde{P}_jf(x) r_j(t) }_{\Phi} \leq C(\Phi) \norm{f}_{\Phi} 
	\end{equation}
	where $C(\Phi)$ only depends on $\Phi$, not $t$.
\end{lem}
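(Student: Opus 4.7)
My plan is to deduce \eqref{5LPintp} from the classical $L^p$ estimate \eqref{5classic} via the interpolation result Lemma \ref{5intplem}. The argument has three stages: a reduction to a function with uniformly well-behaved Matuszewska-Orlicz indices, the choice of two endpoint exponents, and the verification that the interpolation weight $\rho$ produced this way lies in $\cP^\pm$.

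First, since $\Phi \in \Delta_2 \cap \nabla_2$, Lemma \ref{proppq} (3) produces an equivalent nice Young function $\tilde\Phi$ with $\tilde\Phi(u)=\Phi(u)$ for $u \geq u_0$ and $1 < p_{\tilde\Phi}^a \leq q_{\tilde\Phi}^a < \infty$. By Lemma \ref{presmall} (2), $\norm{\cdot}_{\Phi}$ and $\norm{\cdot}_{\tilde\Phi}$ are equivalent, so it suffices to establish \eqref{5LPintp} with $\tilde\Phi$ in place of $\Phi$. Lemma \ref{proppq} (1) then upgrades these index bounds to $1 < \alpha_{\tilde\Phi}^a \leq \beta_{\tilde\Phi}^a < \infty$, a uniform-in-$u$ (not merely asymptotic) statement, which is what the interpolation lemma needs.

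Next, I pick exponents $p_0, p_1$ with $1 < p_0 < \alpha_{\tilde\Phi}^a$ and $\beta_{\tilde\Phi}^a < p_1 < \infty$, and apply \eqref{5classic} at these endpoints to obtain $\norm{T_2}_{L^{p_i}(\bT) \to L^{p_i}(\bT)} \leq C(p_i)$ uniformly in $t$. Setting $\Phi_i(u) = u^{p_i}$, the compatibility requirement $\tilde\Phi^{-1}(u) = u^{1/p_0} \rho(u^{1/p_1-1/p_0})$ of Lemma \ref{5intplem} forces
\[ \rho(s) = \tilde\Phi^{-1}(s^\gamma)\, s^{-\gamma/p_0}, \qquad \gamma := \frac{p_0 p_1}{p_0 - p_1} < 0. \]
With this $\rho$, and taking $\Psi_i = \Phi_i$ (so $\Psi = \tilde\Phi$), Lemma \ref{5intplem} would deliver $\norm{T_2}_{L^{\tilde\Phi}(\bT) \to L^{\tilde\Phi}(\bT)} \lesssim \max_i \norm{T_2}_{L^{p_i}(\bT) \to L^{p_i}(\bT)}$, a bound independent of $t$.

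The main obstacle is verifying $\rho \in \cP^\pm$, i.e., $0 < \alpha_\rho^a \leq \beta_\rho^a < 1$. A direct substitution shows
\[ M^a(t, \rho) \;=\; t^{-\gamma/p_0}\, M^a(t^\gamma, \tilde\Phi^{-1}); \]
taking logarithms and using the reciprocal relations $\alpha_{\tilde\Phi^{-1}}^a = 1/\beta_{\tilde\Phi}^a$ and $\beta_{\tilde\Phi^{-1}}^a = 1/\alpha_{\tilde\Phi}^a$ from Lemma \ref{propab} (2), together with the fact that $t^\gamma \to \infty$ as $t \to 0^+$ and $t^\gamma \to 0$ as $t \to \infty$ (because $\gamma<0$), yields
\[ \alpha_\rho^a = \gamma\Bigl(\tfrac{1}{\alpha_{\tilde\Phi}^a} - \tfrac{1}{p_0}\Bigr), \qquad \beta_\rho^a = \gamma\Bigl(\tfrac{1}{\beta_{\tilde\Phi}^a} - \tfrac{1}{p_0}\Bigr). \]
Positivity of both indices is immediate from $p_0 < \alpha_{\tilde\Phi}^a \leq \beta_{\tilde\Phi}^a$ combined with $\gamma<0$, and the bound $\beta_\rho^a < 1$ reduces by elementary algebra to the inequality $p_1 > \beta_{\tilde\Phi}^a$, which is precisely how $p_1$ was chosen. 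Once $\rho \in \cP^\pm$ is established, Lemma \ref{5intplem} applies and, after transferring back via $\norm{\cdot}_\Phi \approx \norm{\cdot}_{\tilde\Phi}$, delivers \eqref{5LPintp} with a constant depending only on $\Phi$.
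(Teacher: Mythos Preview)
Your proposal is correct and follows essentially the same route as the paper: the same reduction via Lemma~\ref{proppq}~(3) to an equivalent Young function with global index bounds, the same choice of endpoints $p_0<\alpha^a\le\beta^a<p_1$, the same interpolation function $\rho$ (your $\gamma=\tfrac{p_0p_1}{p_0-p_1}$ just repackages the exponent $-\tfrac{p_0p_1}{p_1-p_0}$ that the paper writes out), and the same index computation via Lemma~\ref{propab}~(2) to place $\rho$ in $\cP^\pm$ before invoking Lemma~\ref{5intplem}. The only cosmetic differences are notation ($\tilde\Phi$ versus $\Phi_1$) and your explicit intermediate identity $M^a(t,\rho)=t^{-\gamma/p_0}M^a(t^\gamma,\tilde\Phi^{-1})$, which the paper leaves implicit.
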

\begin{proof}
	The argument is similar to the proof of Corollary 2 in Chapter 14 of \cite{Mal89}. By Lemma \ref{proppq} (3), there exists $ \Phi_1 $ such that $ 1 < p_{\Phi_1}^a \leq q_{\Phi_1}^a < \infty $ and $ \Phi_1(u) = \Phi(u) $ if $ u\geq u_0 $ for some $ u_0 $. The inequality \eqref{2pabq} implies that $ 1 < \alpha_{\Phi_1}^a \leq \beta_{\Phi_1}^a <\infty $. We can choose $ p_0, p_1 $ such that $ 1 < p_0 < \alpha_{\Phi_1}^a \leq \beta_{\Phi_1}^a < p_1  < \infty $. Put
	\[ \rho(t) = t^{\frac{p_1}{p_1-p_0}} \Phi_1^{-1} (t^{-\frac{p_0p_1}{p_1-p_0}}) \qquad \mathrm{if} \ t >0   \]
	and $ \rho(0)=0 $. By \eqref{2invab}, we have
	\[ \alpha_\rho^a = \lim\limits_{t \rightarrow 0^+} \frac{\log M^\infty (t,\Phi)}{\log t} = \frac{p_1}{p_1-p_0} -\frac{p_1p_0}{p_1-p_0} \beta_{\Phi^{-1}}^a = \frac{p_1}{p_1-p_0} \bigg( 1- \frac{p_0}{\alpha_{\Phi}^a} \bigg) > 0 \]
	and
	\[ \beta_\rho^a = \lim\limits_{t \rightarrow \infty} \frac{\log M^\infty (t, \Phi)}{\log t} = \frac{p_1}{p_1-p_0} \bigg( 1- \frac{p_0}{\beta_{\Phi}^a} \bigg)  <\frac{p_1}{p_1-p_0} \bigg( 1- \frac{p_0}{p_1} \bigg) =1. \]
	Therefore, $ \rho \in \cP^{\pm} $. It follows from \eqref{5classic} that there exist constants $ C_1 $ and $ C_2 $ such that
	\[ \norm{\sum_{j \in \bN} \tilde{P}_jf(x) r_j(t) }_{p_0} \leq C_0 \norm{f}_{p_0 } \qquad  \norm{\sum_{j \in \bN} \tilde{P}_jf(x) r_j(t) }_{p_1} \leq C_1 \norm{f}_{p_1 }.  \]
	Since $ \Phi_1^{-1}(u) = u^{1/p_0} \rho(u^{1/p_1-1/p_0}) $, \eqref{2intp} and Lemma \ref{presmall} (2) implies that 
	\[ 	\norm{\sum_{j \in \bN} \tilde{P}_jf(x) r_j(t) }_{\Phi} \approx \norm{\sum_{j \in \bN} \tilde{P}_jf(x) r_j(t) }_{\Phi_1} \lesssim \max( C_0,C_1 )\norm{f}_{\Phi_1} \approx \norm{f}_{\Phi}. \]
\end{proof}
\begin{lem}\label{LPest}
	(Littlewood-Paley estimate in Orlicz spaces) Assume that $ \Phi $ is a nice Young function such that $ \Phi \in \Delta_2 \cap \nabla_2 $. Then, we have
	\begin{equation}\label{5LPapprox}
		\norm{\bigg( \sum_{j \in \bN} |\tilde{P}_j f |^2  \bigg)^{\frac{1}{2}} }_{\Phi} \approx \norm{f}_{\Phi}
	\end{equation}
\end{lem}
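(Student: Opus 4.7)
The plan is to establish the two directions of \eqref{5LPapprox} separately, combining the Rademacher-averaging bound \eqref{5LPintp} with Khintchine's inequality (for the upper bound) and with a duality argument (for the lower bound).

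For the bound $\norm{(\sum_j |\tilde{P}_j f|^2)^{1/2}}_\Phi \lesssim \norm{f}_\Phi$, I would apply the $L^1$ lower-half Khintchine inequality pointwise in $x$:
\[ \bigg(\sum_j |\tilde{P}_j f(x)|^2\bigg)^{1/2} \lesssim \int_0^1 \bigg|\sum_j \tilde{P}_j f(x) r_j(t)\bigg|\, dt. \]
Taking the $L^\Phi(\bT)$ norm of both sides and using Minkowski's integral inequality in Orlicz spaces (which follows from convexity of $\Phi$, or equivalently from \eqref{2duality} and Fubini) pushes the $t$-integral outside the norm. The uniform-in-$t$ estimate \eqref{5LPintp} then yields $\int_0^1 \norm{\sum_j \tilde{P}_j f(\cdot) r_j(t)}_\Phi\, dt \lesssim \norm{f}_\Phi$, giving this direction.

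For the reverse direction, I would use duality. By Lemma \ref{proppq} (2), $\Phi \in \Delta_2 \cap \nabla_2$ is equivalent to $1 < p_\Phi^\infty \leq q_\Phi^\infty < \infty$, and this passes to the complementary Young function $\Psi$ so that $\Psi \in \Delta_2 \cap \nabla_2$ as well. Applying the just-proved upper bound to $\Psi$ with slightly modified projections $\tilde{P}_j'$ -- the Fourier multipliers associated with fattened bumps $\tilde\psi_j$ satisfying $\tilde\psi_j \psi_j = \psi_j$ on $\bZ$, so that $\tilde{P}_j' \tilde{P}_j = \tilde{P}_j$ -- yields $\norm{(\sum_j |\tilde{P}_j' g|^2)^{1/2}}_\Psi \lesssim \norm{g}_\Psi$. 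By \eqref{2duality} together with the resolution $f = \sum_j \tilde{P}_j f$,
\[ \norm{f}_\Phi \approx \sup_{\norm{g}_\Psi \leq 1} \bigg|\int f g\, dx\bigg| = \sup_{\norm{g}_\Psi \leq 1} \bigg|\sum_j \int \tilde{P}_j f \cdot \tilde{P}_j' g\, dx\bigg|, \]
where the second equality uses Parseval (the frequency supports of $\tilde{P}_j f$ and $g - \tilde{P}_j' g$ are disjoint). Cauchy-Schwarz pointwise in $x$ followed by the Hölder inequality \eqref{2Holder} bounds the right-hand side by $\norm{(\sum_j |\tilde{P}_j f|^2)^{1/2}}_\Phi \cdot \norm{(\sum_j |\tilde{P}_j' g|^2)^{1/2}}_\Psi \lesssim \norm{(\sum_j |\tilde{P}_j f|^2)^{1/2}}_\Phi$, as required.

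The conceptual heart of the argument is the Khintchine-style transference between the square function and a randomized Rademacher sum, paired with the self-duality of the class $\Delta_2 \cap \nabla_2$; this mirrors the classical $L^p$ proof. The main technical obstacle lies in the lower bound, where one must set up the modified projections $\tilde{P}_j'$ so that $\tilde{P}_j' \tilde{P}_j = \tilde{P}_j$ and justify the resolution $f = \sum_j \tilde{P}_j f$ from the definition of $\psi_j$, a standard but slightly delicate Littlewood-Paley setup. Verifying Minkowski's integral inequality in the Orlicz setting is comparatively routine, following from convexity of $\Phi$ and the dual representation \eqref{2duality}.
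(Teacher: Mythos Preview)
Your proposal is correct and follows essentially the same route as the paper: Khintchine plus \eqref{5LPintp} for the upper bound (the paper phrases the averaging step via Jensen's inequality rather than Minkowski, which is the same content), and duality plus the upper bound for $\Psi$ for the lower bound. The only minor technical difference is in handling the reproducing identity: the paper stays with the original $\tilde{P}_j$ and absorbs the finite overlap via $\int f\bar g = \sum_{|i|\le c}\sum_j \int \tilde{P}_j f\,\tilde{P}_{j+i}\bar g$, whereas you introduce fattened projections $\tilde{P}_j'$ with $\tilde{P}_j'\tilde{P}_j=\tilde{P}_j$; both are standard and equivalent in effect.
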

\begin{proof}
	We follow the proof of classical Littlewood-Paley estimate with slight modifications. First, we prove that
	\begin{equation}\label{5LPs}
		\norm{\bigg( \sum_{j \in \bN} |\tilde{P}_jf|^2 \bigg)^{1/2}}_\Phi \lesssim \norm{f}_{\Phi}.
	\end{equation}
	By Khintchine's inequality, we have
	\[ \norm{ \bigg( \sum_{j \in \bN} |\tilde{P}_j f|^2 \bigg)^{1/2} }_{\Phi} \approx \norm{\norm{ \sum_{j \in \bN} \tilde{P}(x) r_j(t) }_{L^1(dt)}}_{\Phi(dx)}. \]
	By Jensen's inequality, we obtain
	\[ \begin{split}
	\norm{ \bigg( \sum_{j \in \bN} |\tilde{P}_jf|^2 \bigg)^{1/2} }_{\Phi} &\approx \inf\bigg\{ k :  \int \Phi \bigg(\frac{1}{k}\int \bigg|\sum_{j \in \bN} \tilde{P}_jf(x) r_j(t) \bigg|dt \bigg)dx \leq 1 \bigg\}\\
	&\leq \inf\bigg\{ k :  \iint \Phi \bigg(\frac{1}{k} \bigg|\sum_{j \in \bN} \tilde{P}_jf(x) r_j(t) \bigg| \bigg)dxdt \leq 1 \bigg\}.
	\end{split} \]
	Let $ k = C\norm{f}_{\Phi} $, then \eqref{5LPintp} implies that
	\[ \int \Phi \bigg(\frac{1}{k} \bigg|\sum_{j \in \bN} \tilde{P}_jf(x) r_j(t) \bigg| \bigg)dx \leq 1  \]
	for some constant $ C $. Therefore, \eqref{5LPs} was proved. For the converse, we use \eqref{2duality} and \eqref{2Holder} and obtain that, for some finite constant $c$,
	\[ \begin{split}
		\norm{f} &\lesssim
		 \sup_{\norm{g}_{\Psi} \leq 1} \int f \overline{g} dx \lesssim \sup_{\norm{g}_{\Psi} \leq 1} \sum_{i \leq |c|} \int \sum_j \tilde{P}_jf \tilde{P}_{j+i} \overline{g} dx\\
		&\lesssim \sup_{\norm{g}_{\Psi}\leq 1 } \int \bigg( \sum_j  |\tilde{P}_jf|^2 \bigg)^{1/2} \bigg( \sum_j |\tilde{P}_j \overline{g}|^2 \bigg)^{1/2} dx\\
		&\lesssim \sup_{\norm{g}_{\Psi}\leq 1} \norm{\bigg( \sum_j  |\tilde{P}_jf|^2 \bigg)^{1/2} }_{\Phi} \norm{\bigg( \sum_j |\tilde{P}_j \overline{g}|^2 \bigg)^{1/2}}_{\Psi}\\
		&\leq \norm{\bigg( \sum_j  |\tilde{P}_jf|^2 \bigg)^{1/2} }_{\Phi}.
	\end{split} \]
	In the last inequality, we used \eqref{5LPs}.
\end{proof}
Now, we prove Theorem \ref{M3}. 
\begin{proof}
	We can follow the proof of Theorem 2 in \cite{Bour89} since we established Lemma \ref{LPest}. By the assumption \eqref{M3cond1}, we have
	\[\begin{split}
		K_{\Phi_0}(E_{2^r}) \lesssim \frac{|E_{2^r}|^{1/2}}{\Phi_0^{-1}(2^r)}
	\end{split}. \]
	By Theorem \ref{M2}, there exists a set $ S_r \subset E_{2^r} $ such that $ |S_r| \approx ( \Phi_1^{-1} (2^r))^2 $ and
	\[ \norm{ \sum_{n \in S_r} a_ne^{2\pi i nx} }_{\Phi_1} \lesssim \bigg(\sum_{n \in S_r}|a_n|^{2}\bigg)^{1/2}. \]
	Let $ S = \cup_{r=1}^\infty S_r $ and $ f(x) = \sum_{n \in S} a_n e^{2\pi i n x} $. By \eqref{5LPapprox}, we have
	\[ \norm{\sum_{n \in S} a_ne^{2\pi in x}}_{\Phi_1} \approx \norm{ \bigg( \sum_{j \in \bN} | \tilde{P}_jf|^2 \bigg)^{1/2} }_{\Phi_1}. \]
	Thus, we obtain
	\[ \begin{split}
		\norm{\bigg( \sum_{j \in \bN} |\tilde{P}_j f|^2 \bigg)^{1/2} }_{\Phi_1}^2 &= \inf\bigg\{  k^2 : \int \Phi_1\bigg( \frac{1}{k} \bigg(\sum_{j \in \bN} |\tilde{P}_jf|^2 \bigg)^{1/2} \bigg)dx \leq 1  \bigg\}\\
		&=\inf\bigg\{  s : \int \Phi_1\bigg(  \bigg( \frac{1}{s}\sum_{j \in \bN} |\tilde{P}_jf|^2 \bigg)^{1/2} \bigg)dx \leq 1  \bigg\}.
	\end{split} \]
	By Lemma \ref{5newconv}, there exists $ \tilde{\Phi}_1 $ such that $ {\Phi}_1 \sim \tilde{\Phi}_1$ and $ \tilde{\Phi}_1(u^{1/2}) $ is convex so that the Orlicz norm with respect to $ \tilde{\Phi}_1(u^{1/2}) $ is well defined. Therefore, we have
	\[ \begin{split}
	\norm{\bigg( \sum_{j \in \bN} |\tilde{P}_j f|^2 \bigg)^{1/2} }_{\Phi_1}^2 &		\lesssim \inf\bigg\{  s : \int \tilde{\Phi}_1\bigg(  \bigg( \frac{1}{s}\sum_{j \in \bN} |\tilde{P}_jf|^2 \bigg)^{1/2} \bigg)dx \leq 1  \bigg\}\\
	&=\norm{\sum_{j \in \bN} |\tilde{P}_jf|^2}_{\Gamma} \qquad \mathrm{where}\  
	 \Gamma(u) = \tilde{\Phi}_1(u^{1/2})\\
	&\leq \sum_{j \in \bN} \norm{|\tilde{P}_jf|^2}_{\Gamma}  = \sum_{j \in \bN} \norm{\tilde{P}_jf}_{\tilde{\Phi}_1}^2 \approx \sum_{j \in \bN} \norm{\tilde{P}_jf}_{\Phi_1}^2 \\
	&\lesssim \sum_{j \in \bN} \norm{\tilde{P}_jf }_2^2 \approx \norm{f}_2^2.
	\end{split} \]  
Note that we only used convexity of the function when we used $\Gamma(u)$. Hence, $ S $ is a $ \Lambda(\Phi_1) $-set. Now, let us consider 
\[ k = \frac{( \Phi_1^{-1} (2^r) )^2}{\Phi_2^{-1}(2^r)}. \]
Then, we have  
\begin{equation*}
	\int_{|x| \leq 2^{-r}/10 } \Phi_2 \bigg(\frac{1}{k} {\sum_{n \in S_r} e^{2\pi i n x}} \bigg) \gtrsim \frac{2^{-r}}{10} \Phi_2 \bigg( \frac{1}{2k} ( \Phi_1^{-1}(2^r) )^2 \bigg)  \gtrsim 1.
\end{equation*}
Therefore, it follows that
\[ \norm{\sum_{n \in S_r} e^{2\pi i n x}}_{\Phi_2} \gtrsim \frac{ (\Phi_1^{-1} (2^{r}))^2 }{\Phi_2^{-1}(2^r)} \approx \frac{\Phi_1^{-1}(2^r)}{\Phi_2^{-1}(2^r)} \norm{\sum_{n \in S_r} e^{2\pi i n x}}_2. \]
By using \eqref{M3cond2} and Lemma \ref{5inv_est}, we obtain
\[ \infty = \sup_{ x \geq 1} \frac{\Phi_1^{-1}(x)}{\Phi_2^{-1}(x) } \approx \sup_{ r \geq 1} \frac{\Phi_1^{-1}(2^r)}{\Phi_2^{-1}(2^r) }  \]
Therefore, $ S $ is not a $ \Lambda(\Phi_1) $-set.
\end{proof}

	\end{document}